\documentclass[final]{amsart}

\usepackage{amssymb}
\usepackage{amsmath}
\usepackage{mathtools}
\usepackage{microtype}
\usepackage{csquotes}
\usepackage[mathscr]{euscript}

\usepackage{hyperref}
\usepackage{amssymb}
\usepackage{latexsym}
\usepackage{wrapfig}

\usepackage{comment}

\usepackage{graphicx}
\usepackage{todonotes}

\usepackage{tikz}
\usetikzlibrary{matrix,arrows}

\usepackage[all]{xy}

\usepackage{setspace}

\setlength{\marginparsep}{0.2cm}
\setlength{\marginparwidth}{2.5cm}
\setlength{\marginparpush}{0.5cm}

\numberwithin{equation}{section}

\setcounter{secnumdepth}{2}
 \setcounter{tocdepth}{2}

\theoremstyle{definition}
\newtheorem{definition}[equation]{Definition}
\newtheorem{remark}[equation]{Remark}
\newtheorem{example}[equation]{Example}
\newtheorem{que}[equation]{Question}

\newtheorem{constr}[equation]{Construction}

\newtheorem*{definition*}{Definition}
\newtheorem*{remark*}{Remark}
\newtheorem*{example*}{Example}
\newtheorem*{que*}{Question}
\newtheorem*{conj*}{Conjecture}
\newtheorem*{constr*}{Construction}

\theoremstyle{plain}
\newtheorem{lem}[equation]{Lemma}
\newtheorem{thm}[equation]{Theorem}
\newtheorem{prop}[equation]{Proposition}
\newtheorem{cor}[equation]{Corollary}

\newtheorem{thma}{Theorem}

\newtheorem{lema}[thma]{Lemma}
\newtheorem{propa}[thma]{Proposition}
\newtheorem{cora}[thma]{Corollary}

\unitlength1cm

\newcommand{\pref}[2]{\hyperref[#2]{#1 \ref*{#2}}}
\urlstyle{tt}
\hypersetup{urlcolor=blue, citecolor=blue, linkcolor=blue, colorlinks=true}

\newcommand{\bbF}{{\mathbb{F}}}

\newcommand{\bbH}{{\mathbb{H}}}

\newcommand{\bbN}{{\mathbb{N}}}

\newcommand{\bbP}{{\mathbb{P}}}

\newcommand{\bbR}{{\mathbb{R}}}

\newcommand{\bbZ}{{\mathbb{Z}}}

\newcommand{\calB}{{\mathcal{B}}}
\newcommand{\calC}{{\mathcal{C}}}
\newcommand{\calD}{{\mathcal{D}}}
\newcommand{\calE}{{\mathcal{E}}}

\newcommand{\calH}{{\mathcal{H}}}

\newcommand{\calP}{{\mathcal{P}}}

\newcommand{\calR}{{\mathcal{R}}}
\newcommand{\calS}{{\mathcal{S}}}

\newcommand{\calX}{{\mathcal{X}}}

\newcommand{\scrA}{{\mathscr{A}}}

\let\ORGvarepsilon=\varepsilon
\let\varepsilon=\epsilon
\let\epsilon=\ORGvarepsilon

\newcommand{\scal}{{\mathbf{scal}}}

\newcommand{\diff}{{\mathrm{Diff}}}

\newcommand{\Spin}{\mathrm{Spin}}
\newcommand{\SO}{\mathrm{SO}}
\newcommand{\ort}{\mathrm{O}}

\newcommand{\bord}{\calB ord}

\newcommand{\obj}[1]{{\mathrm{\textbf{obj}_{#1}}}}
\newcommand{\morph}[1]{{\mathrm{\textbf{mor}_{#1}}}}
\newcommand{\mor}[2]{{\mathrm{\textbf{mor}_{#1}}(#2)}}

\newcommand{\im}{\mathrm{im\ }}
\newcommand{\tr}{\mathrm{\textbf{tr }}}
\newcommand{\rk}[1]{\mathrm{{rank}(#1)}}

\newcommand{\emb}{{\mathrm{Emb}}}
\newcommand{\imm}{{\mathrm{Imm}}}
\newcommand{\mon}{{\mathrm{Mon}}}
\newcommand{\map}{{\mathrm{Map}}}

\newcommand{\aut}{\mathrm{Aut}}

\newcommand{\congarrow}{\overset{\cong}\longrightarrow}
\newcommand{\bun}{\mathrm{Bun}}

\newcommand{\hiso}{\mathrm{h}\mathbf{Iso}}
\newcommand{\haut}{\mathrm{h}\mathbf{Aut}}

\newcommand{\embeds}{\hookrightarrow}

\newcommand{\hp}[1]{\bbH\bbP^{#1}}

\newcommand{\too}{\longrightarrow}
\newcommand{\sign}{\mathrm{sign}}

\newcommand{\inddiff}{\mathrm{inddiff}}
\newcommand{\se}{\calS\calE}
\newcommand{\hotop}{\mathrm{hTop}}
\newcommand{\op}{\mathrm{op}}
\newcommand{\lst}{\mathrm{lst}}
\newcommand{\rst}{\mathrm{rst}}

\newcommand{\id}{\mathrm{id}}
\newcommand{\pt}{\mathrm{pt}}
\newcommand{\tor}{\mathrm{tor}}
\newcommand{\mtor}{\mathrm{mtor}}

\newcommand{\ie}{\mbox{i.\,e.\,}{}}

\title[The action of $\diff(M)$ on $\calR^+(M)$]{The action of the mapping class group on metrics of positive scalar curvature}

\author{Georg Frenck}
\thanks{G.F. was supported by the SFB 878 ``Groups, Geometry and Actions'' and by the Deutsche Forschungsgemeinschaft (DFG, German Research Foundation) under Germany 's Excellence Strategy – EXC 2044 – 390685587, Mathematics Münster: Dynamics – Geometry - Structure}
\address{KIT, Karlsruher Institut für Technologie\\
Englerstraße 2\\
76131 Karlsruhe}
\email{georg.frenck@kit.edu}
\email{math@frenck.net}
\urladdr{frenck.net/Math}

\date{\today}

\keywords{}

\begin{document}

\begin{abstract}
We present a rigidity theorem for the action of the mapping class group $\pi_0(\diff(M))$ on the space $\calR^+(M)$ of metrics of positive scalar curvature for high dimensional manifolds $M$. This result is applicable to a great number of cases, for example to simply connected $6$-manifolds and high dimensional spheres. Our proof is fairly direct, using results from parametrised Morse theory, the $2$-index theorem and computations on certain metrics on the sphere. We also give a non-triviality criterion and a classification of the action for simply connected $7$-dimensional $\Spin$-manifolds.
\end{abstract}

\maketitle


\section{Introduction}

\subsection{Statement of the results}

\noindent For a closed manifold $M$ let $\calR^+(M)$ denote the space of all Riemannian metrics of positive scalar curvature on $M$. The diffeomorphism group $\diff(M)$ of $M$ acts on the space $\calR^+(M)$ by pullback and this action defines a group homomorphism
\[\Theta\colon\Gamma(M)\coloneqq\pi_0(\diff(M))\too\pi_0(\haut(\calR^+(M)))\]
from the \emph{mapping class group of $M$} to the group of homotopy classes of homotopy self-equivalences of $\calR^+(M)$. Our main result is that the image of this map is often very small. To state this precisely without too many technicalities, we confine ourselves to the special case where $M$ is simply connected and $\Spin$ in this introduction, but remark that we prove results for all manifolds of dimension at least $6$.

Let $\ell$ be a $\Spin$-structure on $M$ and recall that a \emph{$\Spin$-diffeomorphism} of $(M,\ell)$ is a pair $(f,\hat f)$ consisting of an orientation preserving diffeomorphism $f\colon M\to M$ and an isomorphism $\hat f\colon f^*\ell\to \ell$ of $\Spin$-structures. We denote by $\diff^\Spin(M,\ell)$ the group of all $\Spin$-diffeomorphisms and by $\Gamma^\Spin(M,\ell)\coloneqq\pi_0(\diff^\Spin(M,\ell))$ the \emph{$\Spin$-mapping class group} of $(M,\ell)$. For a diffeomorphism $f$ of $M$ denote the \emph{mapping torus} by $T_f\coloneqq M\times[0,1]/(x,0)\sim(f(x),1)$. If $(f,\hat f)$ is a $\Spin$-diffeomorphism, $T_f$ inherits a $\Spin$-structure. This construction defines a group homomorphism
\[T\colon\Gamma^\Spin(M,\ell)\too\Omega_d^\Spin\]
to the cobordism group of closed $d$-dimensional $\Spin$-manifolds. Our main result is the following.

\begin{thma}\label{thm:a}
	If $(M,\ell)$ is a simply connected $\Spin$-manifold of dimension $d-1\ge6$, there exists a group homomorphism 
	\[\se\colon\Omega_d^{\Spin}\too \pi_0(\haut(\calR^+(M))),\]
	such that the following diagram, where $F$ is the forgetful map, commutes
	\begin{center}
	\begin{tikzpicture}
		\node (0) at (0,1.5) {$\Gamma^\Spin(M,\ell)$};
		\node (1) at (0,0) {$\Gamma(M)$};
		\node (2) at (5,1.5) {$\Omega_d^\Spin$};
		\node (3) at (5,0) {$\pi_0(\haut(\calR^+(M))).$};
		
		\draw[->] (0) to node[auto]{$F$} (1);
		\draw[->] (0) to node[auto]{$T$} (2);
		\draw[->] (1) to node[auto]{$\Theta$} (3);
		\draw[->] (2) to node[auto]{$\se$} (3);
	\end{tikzpicture}
	\end{center}
\end{thma}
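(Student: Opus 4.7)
The plan is to construct $\se$ via a cobordism/surgery machine and then identify, on mapping tori, the resulting self-equivalence with the pullback action. For the construction, I would proceed as follows. Given $[W]\in\Omega_d^\Spin$, form the $\Spin$-cobordism
\[V_W\;:=\;(M\times[0,1])\,\#\,W\]
from $M$ to $M$ by interior connected sum. Simple connectivity of $M$ together with $d-1\ge 6$ lets me apply the $2$-index theorem (the handle-trading technique alluded to in the abstract) to choose a relative Morse function on $V_W$ whose critical points all have indices $k$ with $3\le k\le d-3$. Each such elementary cobordism induces a homotopy self-equivalence of $\calR^+$ by the parametrised Gromov--Lawson--Chernysh surgery theorem, and composing these yields a candidate $\se_W\in\pi_0(\haut(\calR^+(M)))$.

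To make $\se$ descend to $\Omega_d^\Spin$ and be a homomorphism, I need two checks. If $W_0,W_1$ cobound a $\Spin$ $(d+1)$-manifold $X$, then applying the same handle-trading argument in families (using a generic Morse function on $X$) produces a $1$-parameter path of cobordisms from $M$ to $M$ connecting $V_{W_0}$ and $V_{W_1}$, with handle indices in $\{3,\dots,d-3\}$ throughout. The parametrised surgery theorem then turns this path into a homotopy $\se_{W_0}\simeq\se_{W_1}$. Additivity under disjoint union, via stacking of handle decompositions of $V_{W_1}$ and $V_{W_2}$, gives $\se_{W_1\sqcup W_2}\simeq\se_{W_2}\circ\se_{W_1}$, so $\se$ is a group homomorphism.

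For the commutativity of the square, observe that the pullback action $\Theta(F(f,\hat f))=[g\mapsto f^*g]$ is the self-equivalence of $\calR^+(M)$ induced (trivially, since the handle count is zero) by the cylinder cobordism $M\times[0,1]$ with right boundary identified to $M$ via $f$. Closing this cylinder by the identity recovers the $\Spin$-mapping torus $T_f$, so this cobordism represents the class $[T_f]=T([f,\hat f])\in\Omega_d^\Spin$ in the sense of the previous paragraphs. The well-definedness step then gives $\se([T_f])=\Theta(F([f,\hat f]))$, which is exactly the commutativity claim. The main obstacle is the \emph{parametrised} handle-trading argument: one must run the $2$-index theorem in families over the coboundary $X$ and keep all handle indices inside $\{3,\dots,d-3\}$ for the entire parameter interval, so that Chernysh's parametrised surgery theorem applies along the whole path. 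Simple connectivity of $M$ and the dimension bound $\dim M\ge 6$ are critically used here, and the auxiliary computations on distinguished metrics on spheres (also flagged in the abstract) will be needed to pin down the ambiguity left at each individual elementary surgery so that the composition assembles coherently.
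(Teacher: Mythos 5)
Your overall strategy is the paper's: represent a class in $\Omega_d^\Spin$ by a self-cobordism of $M$, make it admissible, define $\se_W$ as a composite of Chernysh surgery maps over a handle decomposition, prove independence of all choices, and identify the cylinder with twisted end with the mapping torus. The last identification and the homomorphism property are fine (the paper uses the disjoint union $M\times[0,1]\amalg W$ rather than an interior connected sum, but the two are cobordant rel boundary, so this is immaterial once well-definedness is in place). The genuine gap is in the descent to cobordism classes. Your proposed mechanism --- a ``$1$-parameter path of cobordisms'' over the coboundary $X$, converted into a homotopy by the parametrised surgery theorem --- does not work as stated: a Morse function on the $(d+1)$-dimensional coboundary rel its ends does not produce a continuous family of cobordisms; its level sets change by \emph{discrete surgeries in the interior} of the $d$-dimensional cobordism at each critical value. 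Chernysh's theorem concerns metrics on a fixed closed manifold with prescribed form near a surgery datum and says nothing about the effect on $\calS_W$ of surgering the interior of $W$. What is actually needed is a separate surgery-invariance statement: for $\Phi\colon S^{k-1}\times D^{d-k+1}\embeds \mathrm{Int}\ W$ with $3\le k\le d-3$ one has $\calS_W\sim\calS_{W_\Phi}$. The paper proves this by decomposing $W=W_0\cup W_1$ and $W_\Phi=W_0\cup W_1'$ near the surgery locus and showing $W_1^{\op}\cup W_1'\cong M_1\times[0,1]$ rel boundary, with a separate embedding-space argument for the borderline case $k=3$; nothing in your proposal supplies this.

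The second missing piece is the independence of $\se_W$ from the chosen admissible handle decomposition of a fixed cobordism. You correctly flag this as ``the main obstacle,'' but it is the technical core of the theorem, not a loose end: different admissible decompositions are related by Cerf-theoretic moves, the Hatcher--Igusa theorem is what keeps all indices in the admissible range along a path of generalized Morse functions (this is where $d\ge 7$ really enters, not in producing a single decomposition), and the only hard move --- handle cancellation --- must be reduced to showing that a cancelling pair of surgeries returns the round metric on $S^{d-1}$ to its own path component, via the mixed torpedo metrics. Finally, note that arranging all indices in $[3,d-3]$ for $M\times[0,1]\amalg W$ is not an application of the $2$-index theorem but of surgery below the middle dimension compatible with the tangential $2$-type $B\Spin(d+1)$ (Kreck's lemma); this is where the $\Spin$ hypothesis on $W$, and not merely the simple connectivity of $M$, is used. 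Without these steps $\se$ is not shown to be well defined, so the commutative square is not yet established.
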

\noindent Note that \pref{Theorem}{thm:a} is true but vacuous for $\calR^+(M)=\emptyset$. Since $\Omega_7^\Spin=0$ (cf. \cite[Théorème II.16, p. 49]{thom_quelques}), $f^*\colon \calR^+(M)\to\calR^+(M)$ is homotopic to the identity for every $\Spin$-diffeomorphism $(f,\hat f)$ of a simply connected, $6$-dimensional $\Spin$-manifold $M$. Using computations in characteristic classes we get the following.

\begin{thma}\label{thm:actionofmcg}
	Let $M$ be a simply connected, stably parallelizable manifold of dimension $d-1\ge6$, equipped with a $\Spin$-structure. Let $(f,\hat f)$ be a $\Spin$-diffeomorphism. Then the map 
	\[f^*\colon\calR^+(M) \too\calR^+(M)\]
	is homotopic to the identity unless $d\equiv 1,2\; (\bmod\; 8)$. In the latter case, $(f^2)^*$ is homotopic to the identity. 
\end{thma}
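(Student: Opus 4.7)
By Theorem~A, $f^* = \se([T_f])$ in $\pi_0(\haut(\calR^+(M)))$. Since $T$ is a group homomorphism, $T([f,\hat f]^2) = 2[T_f]$, so the proof reduces to showing that $[T_f] \in \Omega_d^{\Spin}$ is $2$-torsion. By Anderson--Brown--Peterson, $\Omega_*^{\Spin}$ has only $2$-torsion and it is concentrated in degrees $\equiv 1,2 \pmod 8$; hence this would give $[T_f] = 0$ for $d \not\equiv 1,2 \pmod 8$ and $2[T_f] = [T_{f^2}] = 0$ for $d \equiv 1,2 \pmod 8$, giving both conclusions via Theorem~A. Note that in the range $d - 1 \ge 6$, for $d \equiv 1,2 \pmod 8$ the whole group $\Omega_d^{\Spin}$ is already $2$-torsion, so that case of the theorem is automatic and does not even use the stably-parallelizable hypothesis.

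The substantive work is the case $d \not\equiv 1,2 \pmod 8$ with $\Omega_d^{\Spin} \ne 0$, which in our range occurs precisely for $d \equiv 0, 4 \pmod 8$, i.e., $d = 4k \ge 8$. In these degrees $\Omega_d^{\Spin}$ is torsion-free, detected by Pontryagin numbers, so one must show that every rational Pontryagin number of $T_f$ vanishes.

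Write $TT_f = V \oplus \pi^* TS^1$ for the vertical tangent bundle $V$ of $\pi\colon T_f \to S^1$. Stable parallelizability of $M$ gives $p_i(V)|_M = p_i(TM) = 0$. The Serre spectral sequence for $M \hookrightarrow T_f \xrightarrow{\pi} S^1$ has only the two columns $p = 0, 1$, so the induced filtration on $H^*(T_f;\bbQ)$ satisfies $F^2 = 0$. Vanishing of $p_i(V)$ on the fiber places $p_i(V)\in F^1$, and any cup product of two or more such classes lies in $F^2 = 0$ and vanishes. This kills every Pontryagin number of $T_f$ built from a product of $\ge 2$ factors. For $d = 4k$ the only remaining possibility is the primitive Pontryagin number $p_k[T_f]$; since products vanish, $\hat{A}_k(TT_f)$ reduces to a nonzero constant times $p_k$, so $\hat{A}(T_f)$ equals a nonzero constant multiple of $p_k[T_f]$.

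The remaining task is to show $\hat{A}(T_f) = 0$. Via the Atiyah--Singer family index theorem this equals the family index of the Dirac operator on the bundle $M \hookrightarrow T_f \to S^1$ with the Spin structure coming from $\hat f$; the plan is to exploit the stable framing of $TM$ and the coherent Spin-lift provided by $\hat f$ together with the $2$-index theorem advertised in the abstract to force this family index to vanish. The main obstacle is precisely this single-Pontryagin-number step in degree $d = 4k$: the Serre-spectral-sequence argument handles all products cleanly but leaves the primitive contribution $p_k[T_f]$ untouched, and the $2$-index theorem is the essential extra ingredient. Once $\hat{A}(T_f) = 0$ is verified, every Pontryagin number of $T_f$ vanishes, $[T_f]$ is $2$-torsion, and the theorem follows from the structure of $\Omega_*^{\Spin}$.
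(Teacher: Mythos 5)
Your reduction via Theorem~\ref{thm:a} to showing that $[T_f]\in\Omega_d^{\Spin}$ vanishes (resp.\ is $2$-torsion), and your observation that $\Omega_d^{\Spin}$ is entirely $2$-torsion when $d\equiv 1,2\pmod 8$, match the paper's first step and correctly dispose of that case. The rest, however, contains a genuine gap and rests on two incorrect inputs. First, the decisive step is never carried out: you explicitly leave the vanishing of the primitive Pontryagin number $p_k[T_f]$ (equivalently $\hat\scrA(T_f)=0$) as a ``plan'', and the tool you propose for it is misidentified --- the ``$2$-index theorem'' in the abstract is Hatcher--Igusa's two-index theorem from parametrized Morse theory (used for \pref{Theorem}{thm:space-of-gmf-connected}), not an index theorem for families of Dirac operators, so it cannot force a family index to vanish. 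Second, the structural claim that the torsion of $\Omega_*^{\Spin}$ is concentrated in degrees $\equiv 1,2\pmod 8$ is false: Anderson--Brown--Peterson only give that all torsion has order $2$, and there is $2$-torsion in other degrees (for instance the $H\bbZ/2$-summands of $M\Spin$ contribute classes detected only by Stiefel--Whitney numbers, the first in degree $20$). Consequently, even if you had shown that all rational Pontryagin numbers of $T_f$ vanish, you would only know that $[T_f]$ is $2$-torsion, which does not yield $[T_f]=0$ for all $d\not\equiv1,2\pmod 8$; you would in addition have to kill the Stiefel--Whitney numbers of $T_f$, which your argument never addresses. The statement you actually need, and the one the paper uses, is that the \emph{kernel of the forgetful map} $\Omega_d^{\Spin}\to\Omega_d^{\SO}$ is an $\bbF_2$-vector space concentrated in degrees $\equiv1,2\pmod 8$.

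The paper bypasses all of this with a single citation: by a result of Kreck, the mapping torus of a diffeomorphism of a stably parallelizable manifold is \emph{orientedly} nullbordant. This kills every Pontryagin and Stiefel--Whitney number at once --- in particular the primitive $p_k[T_f]$ that your Serre spectral sequence argument cannot reach --- places $[T_f]$ in the kernel of $\Omega_d^{\Spin}\to\Omega_d^{\SO}$, and the Anderson--Brown--Peterson description of that kernel finishes the proof. If you wish to avoid citing Kreck, you must actually prove $p_k[T_f]=0$; your fibration argument only handles the decomposable characteristic numbers.
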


\begin{remark*}
	Any orientation-preserving diffeomorphism $f$ of $M$ can be lifted to a $\Spin$-diffeomorphism if $M$ is simply connected. Therefore $f^*\colon \calR^+(M)\to\calR^+(M)$ is homotopic to the identity for each orientation preserving diffeomorphism of $M$ if $d\not\equiv 1,2\;(\bmod\; 8)$. This conclusion does not hold for orientation-reversing diffeomorphisms (for example it is false if $f\colon S^7\to S^7$ is an orthogonal matrix of determinant $-1$).
\end{remark*}

\noindent For more examples we refer to \cite[Chapter 4.1]{ownthesis}. Using \pref{Theorem}{thm:a} one can also use computational results on $\pi_0(\calR^+(M))$ and $\pi_0(\diff_{x_0}(M))$ (for example \cite{berw} and \cite{grw_abelian}) to find elements in $\pi_0$ and $\pi_1$ of the observer moduli space of psc-metrics for certain manifolds. In the situation of \pref{Theorem}{thm:a}, assume that $f^*g$ is homotopic to $g$ for one $g\in\calR^+(M)$. Then the mapping torus admits a psc metric and hence $\alpha(T_f)=0$. This has an interesting consequence for manifolds of dimension $7$. Recall that the map $\Omega_8^\Spin\congarrow \bbZ\oplus\bbZ$, $[W]\mapsto (\sign(W),\alpha(W))$ is an isomorphism. Since the signature of a mapping torus always vanishes we deduce

\begin{cora}\label{cor:a}
	Let $M$ be a simply connected $\Spin$-manifold of dimension $7$ and let $f$ be a $\Spin$-diffeomorphism. If there exists a metric $g\in\calR^+(M)$ such that $f^*g$ lies in the same path component as $g$, then the map $f^*\colon\calR^+(M)\to\calR^+(M)$ is homotopic to the identity.
\end{cora}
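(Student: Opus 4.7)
The plan is to deduce \pref{Corollary}{cor:a} from \pref{Theorem}{thm:a} via a computation in $\Omega_8^\Spin$. Because $\se$ is a group homomorphism and $T(f,\hat f) = [T_f]$, \pref{Theorem}{thm:a} gives $\Theta(F(f,\hat f)) = \se([T_f])$, so the conclusion reduces to the assertion that $[T_f] = 0 \in \Omega_8^\Spin$. Under the isomorphism $(\sign, \alpha) \colon \Omega_8^\Spin \congarrow \bbZ \oplus \bbZ$ recalled just before the corollary, this in turn reduces to showing $\sign(T_f) = 0$ and $\alpha(T_f) = 0$.

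The signature vanishes because $T_f$ fibers over $S^1$, a classical fact already invoked in the excerpt. For the $\alpha$-invariant, I would turn the hypothesis into a psc metric on $T_f$ itself. A path $\gamma$ in $\calR^+(M)$ from $f^*g$ to $g$ yields, by a standard warped-product construction going back to Gromov--Lawson (with refinements by Gajer and Chernysh), a psc metric of the form $dt^2 + \gamma(\phi(t))$ on $M \times [0,L]$ that is a product near both ends, provided $L$ is chosen large enough that the $t$-derivative contributions to the scalar curvature are dominated by the (uniformly positive) fibrewise scalar curvature. The two ends carry $f^*g$ and $g$ respectively, and the identification $(x,0) \sim (f(x),L)$ is isometric precisely because $f^*g$ and $g$ correspond under $df$. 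Hence the metric descends to a psc metric on $T_f$, and $\alpha(T_f) = 0$ by the Lichnerowicz formula together with the Atiyah--Singer index theorem.

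Combining the two vanishings with the isomorphism $(\sign,\alpha)$ gives $[T_f] = 0$, whence $\Theta(F(f,\hat f)) = \se([T_f]) = [\id]$ in $\pi_0(\haut(\calR^+(M)))$, which is the desired conclusion. The only step that is not entirely formal is the cylinder construction from a path of psc metrics, but this is well established in the psc literature; the real conceptual content is already packaged into \pref{Theorem}{thm:a}, which upgrades the local statement ``$f^*g$ lies in the component of $g$'' to the global statement ``$f^*$ is homotopic to the identity''.
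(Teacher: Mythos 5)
Your proposal is correct and follows essentially the same route as the paper: reduce via \pref{Theorem}{thm:a} to showing $[T_f]=0\in\Omega_8^\Spin$, use the isomorphism $(\sign,\alpha)$, note that the signature of a mapping torus vanishes, and obtain $\alpha(T_f)=0$ by promoting the isotopy $f^*g\sim g$ to a concordance (psc metric on $M\times[0,1]$) that glues to a psc metric on $T_f$, then apply Lichnerowicz. This is exactly the argument given in the paper (see also the implication $5.\Rightarrow1.\Rightarrow2.\Rightarrow3.$ in \pref{Corollary}{cor:7-dimensional}).
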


\begin{propa}\label{prop:spheres}
	Let $d\ge7$, let $f\colon S^{d-1}\to S^{d-1}$ be a $\Spin$-diffeomorphism and let $g_\circ$ denote the round metric. If $f^*g_\circ$  and $g_\circ$ lie in the same path component, then $f^*$ is homotopic to the identity. 
\end{propa}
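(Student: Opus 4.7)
The plan is to combine \pref{Theorem}{thm:actionofmcg} with the information extracted from the hypothesis on $g_\circ$. Since $S^{d-1}$ is simply connected and stably parallelizable and $d \ge 7$, \pref{Theorem}{thm:actionofmcg} applies and directly yields the conclusion whenever $d \not\equiv 1,2 \pmod{8}$. Only the residual case $d \equiv 1,2 \pmod{8}$ remains, where \pref{Theorem}{thm:actionofmcg} merely ensures that $(f^2)^*$ is homotopic to the identity, i.e.\ that $\Theta(f)$ has order dividing $2$; the task is thus reduced to improving this to $\Theta(f) = \id$ using the extra hypothesis.

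For this residual case I would first turn the hypothesis into a geometric statement about $T_f$. Given a path $\{g_t\}_{t\in[0,1]}$ in $\calR^+(S^{d-1})$ from $g_\circ$ to $f^*g_\circ$, the standard Gromov--Lawson warping $g_t + \phi(t)^2\,dt^2$ on $S^{d-1} \times [0,1]$, with $\phi$ concave and growing sufficiently rapidly at the endpoints, has positive scalar curvature and, by $f$-equivariance of the gluing, descends to a psc metric on $T_f$. Consequently $\alpha(T_f) = 0 \in KO_d \cong \bbZ/2$ by the Lichnerowicz--Hitchin obstruction. Next one would identify $[T_f] \in \Omega_d^{\Spin}$: since $T_f$ fibres over $S^1$ with fibre $S^{d-1}$, the tangent bundle splits as $TT_f \cong T_\pi \oplus \underline{\bbR}$, and an analysis of $H^*(T_f)$ (which is concentrated in degrees $0,1,d-1,d$) shows that all Pontryagin and Stiefel--Whitney numbers of $T_f$ vanish. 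Combined with the description of $\Omega_*^{\Spin}$ at the prime $2$ via $KO$-theory, the class of such a mapping torus in dimensions $d \equiv 1,2 \pmod{8}$ is detected solely by $\alpha$, giving $[T_f] = 0$. The commutative diagram of \pref{Theorem}{thm:a} then yields $\Theta(f) = \se(T(f,\hat f)) = \se([T_f]) = \id$, as desired.

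The step I expect to be the main obstacle is the identification of $[T_f]$ above: while producing the psc metric on $T_f$ and invoking \pref{Theorem}{thm:actionofmcg} are routine, rigorously justifying that $\alpha$ detects the class of every such mapping torus in $\Omega_d^{\Spin}$ for $d \equiv 1,2 \pmod{8}$ requires careful bookkeeping of $KO$-characteristic numbers of sphere bundles over $S^1$. One could alternatively sidestep this by showing that the image of $T\colon \Gamma^{\Spin}(S^{d-1},\ell) \to \Omega_d^{\Spin}$ is contained in the subgroup detected by $\alpha$, or by extracting the needed factorisation directly from the explicit construction of $\se$ used in the proof of \pref{Theorem}{thm:a}.
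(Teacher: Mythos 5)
Your route is genuinely different from the paper's, and its key step is left with a real gap. The paper's proof is a two-line application of \pref{Proposition}{prop:single-metric} and \pref{Remark}{rem:single-metric}: writing $f^*\sim\se_{T_f}\sim\calS_W$ for an admissible cobordism $W$ in the class of $S^{d-1}\times[0,1]\amalg T_f$, one uses that $S^{d-1}$ is the double of a disk and that $g_\circ$ decomposes as a right-stable union a left-stable metric, so the single relation $\calS_W(g_\circ)\sim g_\circ=\calS_{S^{d-1}\times[0,1]}(g_\circ)$ already forces $\calS_W\sim\id$. This is uniform in $d\ge7$, uses no index theory, and is the reason the round metric appears in the statement: the hypothesis enters through stability of $g_\circ$, not through the psc metric it produces on $T_f$. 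In your argument, the reduction to $d\equiv1,2\pmod 8$ via \pref{Theorem}{thm:actionofmcg} and the passage from $f^*g_\circ\sim g_\circ$ to a psc metric on $T_f$ and hence $\alpha(T_f)=0$ are both fine (the latter is exactly the argument for $5.\Rightarrow1.$ in \pref{Corollary}{cor:7-dimensional}).

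The gap is the step you yourself flag, and your sketched justification for it does not work as stated: knowing that $H^*(T_f)$ is concentrated in degrees $0,1,d-1,d$ and that all Pontryagin and Stiefel--Whitney numbers vanish does \emph{not} pin down the class in $\Omega_d^{\Spin}$ together with $\alpha$. By Anderson--Brown--Peterson the bordism group in these degrees is detected by $\alpha$ \emph{together with} the twisted invariants $\alpha(W;\pi^J(TW))$ coming from the higher $ko$-summands of $M\Spin$; for instance $\hp2\times S^1$ (nonbounding $\Spin$-structure on the circle) is a nonzero class in $\Omega_9^{\Spin}\cong(\bbZ/2)^2$ with $\alpha=0$ and with all Pontryagin and Stiefel--Whitney numbers zero. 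What actually saves your argument is not the cohomology of $T_f$ but its stable parallelizability: the stable derivative of $f$ vanishes in $\pi_{d-1}(O)$ (otherwise the homotopy sphere $D^d\cup_fD^d$ would fail to be stably parallelizable, contradicting Kervaire--Milnor), so the vertical tangent bundle of $T_f$ is stably trivial and $T_f$ is stably parallelizable. Hence $\pi^J(TT_f)=0$ for $J\ne\emptyset$ and all Stiefel--Whitney numbers vanish; equivalently $[T_f]$ lies in the image of $\pi_d^s\to\Omega_d^{\Spin}$, and that image is detected by $\alpha$ because the composite of the unit $\bbS\to M\Spin$ with the projection to any Anderson--Brown--Peterson summand other than the bottom $ko$ is null for degree reasons. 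With this repair your proof closes (and in fact yields the stronger statement that $f^*g\sim g$ for \emph{any} psc metric $g$ forces $f^*\sim\id$ on spheres), but the repair is a genuine piece of stable homotopy theory that your proposal does not supply, whereas the paper's stability argument avoids it entirely.
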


\begin{remark*}
The first result concerning the action of the mapping class group on the space of positive scalar curvature metrics was given by Hitchin \cite{hitchin_spinors}, where he constructed a map $\inddiff\colon \pi_0(\calR^+(M^{d-1}))\times\pi_0(\calR^+(M^{d-1}))\too KO^{-d}(\pt)$ and used the Atiyah--Singer index theorem to show that $\inddiff(g,f^*g) = \alpha(T_f)$. Hence, the $\alpha$-invariant of the mapping torus of $f$ is an obstruction to $f$ acting trivially on $\pi_0(\calR^+(M))$. For $S^{d-1}$ with $d\ge9$ and $d\equiv1,2\;(\bmod\;8)$ there exist diffeomorphisms $f$ with $\alpha(T_f)\ne0$ which implies that $\calR^+(S^{d-1})$ is not connected in these dimensions. \pref{Theorem}{thm:a} shows that these are the only dimensions where simply connected, stably parallelizable manifolds admit such a diffeomorphism.
\end{remark*}

\begin{remark*}
In \cite{berw} a factorisation result similar to \pref{Theorem}{thm:a} is proven. It is shown that for certain manifolds the image of $\pi_0(\diff_\partial(M^{2n}))\to \pi_0(\haut\calR^+(M)))$ is abelian, where $\diff_\partial$ denotes those diffeomorphisms that fix a neighbourhood of the boundary point-wise. Using an obstruction theoretic argument they conclude that this map factors through $\pi_1(MT\Spin(2n))$. This has been upgraded in \cite{erw_psc2} and \cite{erw_psc3} to hold for a bigger class of manifolds. \pref{Theorem}{thm:a} directly implies abelianess of the image and improves the named results since the map $\pi_1(MT\Spin(d-1))\to\Omega_d^\Spin$ has nontrivial kernel.
\end{remark*}

\subsection{Outline of the proof} 
\pref{Theorem}{thm:a} follows from a more general, cobordism theoretic result which we will develop in this outline. The main geometric ingredient is a parametrised version of the famous Gromov--Lawson--Schoen--Yau surgery theorem due to Chernysh. Let $\varphi\colon S^{k-1}\times D^{d-k}\embeds M$ be an embedding and let $\calR^+(M,\varphi)\coloneqq\{g\in\calR^+(M)|\varphi^*g=g_\circ +g_{\tor}\}$ be the space of those metrics that have a fixed standard form on the image of $\varphi$.
\begin{thm}[{\cite[Theorem 1.1]{chernysh}, \cite[Main Theorem]{walsh_cobordism}}]
	If $d-k\ge3$, the inclusion $\calR^+(M,\varphi)\embeds \calR^+(M)$ is a weak equivalence.
\end{thm}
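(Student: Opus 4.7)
The plan is to use the parametrised Gromov--Lawson surgery technique to construct, from any compact family of PSC metrics, a family of paths of PSC metrics ending in $\calR^+(M,\varphi)$. Since $\calR^+(M,\varphi)\hookrightarrow\calR^+(M)$ is a closed subspace, it suffices to check the relative homotopy lifting property against finite CW-pairs $(K,L)$: given a map $K\to\calR^+(M)$ sending $L$ into $\calR^+(M,\varphi)$, I would produce a homotopy through maps into $\calR^+(M)$ ending with a map into $\calR^+(M,\varphi)$, constant on $L$.

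The first step is the pointwise construction of Gromov--Lawson. Fix $g\in\calR^+(M)$ and identify a tubular neighbourhood of $\varphi(S^{k-1}\times\{0\})$. The idea is to bend the graph of the distance function from this sphere along a carefully chosen curve $\gamma\subset(0,\epsilon)\times\bbR$ in the upper half-plane, producing a new metric that interpolates between $g$ far from the sphere and a product of a tiny round sphere with a standard disc ``torpedo'' near the sphere. Positivity of scalar curvature along the bent part is the central computation: the principal curvatures of $\gamma$ contribute non-negatively up to controllable terms, and the leading term is the intrinsic scalar curvature of the small sphere $S^{k-1}(r)$, namely $(k-1)(k-2)/r^2$, which dominates if and only if $k-1\ge 2$; the condition $d-k\ge 3$ is needed so that the torpedo metric on $D^{d-k}$ itself exists with positive scalar curvature. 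Once the metric has standard form on a small tubular neighbourhood, a further concordance rescales to produce the required form $g_\circ+g_{\tor}$ on the full image of $\varphi$.

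The second step is the parametrised version. Given a continuous map $K\to\calR^+(M)$ from a compact parameter space, the Gromov--Lawson deformation data (tube radius $\epsilon$, bending curve $\gamma$, torpedo profile) must be chosen to depend continuously on the parameter, so that the resulting family of PSC deformations is continuous in $K$. By compactness of $K$ and openness of the positive scalar curvature condition, I can pick a single set of parameters that works uniformly on $K$ after sufficient rescaling; this gives a continuous deformation $H\colon K\times[0,1]\to\calR^+(M)$ with $H(\cdot,0)$ the given family and $H(\cdot,1)$ landing in $\calR^+(M,\varphi)$. A standard rel-$L$ modification, using a cut-off that interpolates between the identity deformation on $L$ (whose metrics are already in the standard form) and the Gromov--Lawson deformation on $K\setminus L$, makes the homotopy constant on $L$.

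The main obstacle is precisely the continuity of the construction in families: the bending parameters in the classical argument are chosen non-canonically, depending on pointwise curvature and injectivity radius bounds of $g$, and there is no universal choice. Overcoming this requires a careful uniform analytic estimate across the compact family and a gluing of local parameter choices via a partition of unity on $K$, together with an argument (as in Chernysh and Walsh) that the space of valid bending curves producing PSC metrics is itself contractible, so that locally chosen deformation data can be coherently interpolated. Once this is established, the two-sided lifting property is immediate and yields the weak equivalence.
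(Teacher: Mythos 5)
First, note that the paper does not prove this theorem at all: it is quoted verbatim from Chernysh and Walsh (with a detailed exposition in Ebert--Frenck), so there is no internal proof to compare against. Your overall strategy --- reduce to a relative compression property for CW-pairs $(K,L)$, run the Gromov--Lawson bending construction in families, and handle the non-canonicity of the bending data via uniform estimates over the compact parameter space and contractibility of the space of admissible deformation data --- is indeed the Chernysh/Walsh strategy in outline.

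However, there is a concrete error in the curvature analysis at the heart of your sketch. For a surgery datum $\varphi\colon S^{k-1}\times D^{d-k}\embeds M$ the Gromov--Lawson bending takes place in a tubular neighbourhood of the embedded $(k-1)$-sphere, whose distance spheres (the fibres of the level sets of the radial function) are the \emph{normal} spheres $S^{d-k-1}(r)$, not copies of $S^{k-1}(r)$. The term that must dominate in the Gauss-equation computation as $r\to 0$ is $\sin^2\theta\cdot(d-k-1)(d-k-2)/r^2$, and positivity of this term is precisely the codimension condition $d-k\ge 3$ in the statement. Your version invokes $(k-1)(k-2)/r^2$ and concludes that the bending works ``if and only if $k-1\ge 2$'', i.e.\ $k\ge 3$; that is not a hypothesis of the theorem, and an argument that genuinely needed it would fail to prove the statement for $k\le 2$ (e.g.\ $0$- and $1$-surgeries, which the theorem covers and which the paper uses). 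Relegating $d-k\ge 3$ to the mere existence of the torpedo metric misses where the hypothesis actually enters. A second, smaller gap is your rel-$L$ step: the Gromov--Lawson deformation does not fix $\calR^+(M,\varphi)$ pointwise, so interpolating via a cut-off on $K$ between ``do nothing on $L$'' and ``full deformation off $L$'' produces intermediate parameters whose time-one metrics need not lie in $\calR^+(M,\varphi)$; making the compression relative requires the more careful normal-form and isotopy arguments of Chernysh and Ebert--Frenck rather than a partition-of-unity patch on the parameter space.
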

\noindent As a consequence we obtain a map 
\[\calS_\varphi\colon\calR^+(M)\dashrightarrow \calR^+(M,\varphi) \congarrow \calR^+(M_\varphi,\varphi^{\op})\embeds \calR^+(M_\varphi),\]
where the first map is the homotopy inverse to the inclusion and the second one is given by cutting out $\varphi_*(g_\circ + g_{\tor})$ and pasting in $\varphi^{\op}_*(g_{\tor} + g_\circ)$. Next we want to define the map $\calS$ for general cobordisms. In this paper, a \emph{cobordism} between $(d-1)$-dimensional manifolds $M_0$ and $M_1$ is a triple $(W,\psi_0,\psi_1)$ consisting of a $d$-dimensional manifold $W$ whose boundary has a decomposition $\partial W =\partial_0W\amalg \partial_1W$ and diffeomorphisms $\psi_i\colon \partial_iW\to M_i$ for $i=0,1$. We will only consider $\Spin$-structures on cobordisms in the final step of the proof. An \emph{admissible handle decomposition $H$ of $(W,\psi_0,\psi_1)$} is a collection of manifolds $N_1,\dots, N_{n}$, embeddings $\varphi_i\colon S^{k_i-1}\times D^{d-k_i}\embeds N_{i}$ with $d-k_i\ge3$ for $i=1,\dots,n$ and diffeomorphisms $f_0\colon \partial_0W\congarrow N_1$, $f_n\colon (N_n)_{\varphi_n}\congarrow \partial_1 W$ and $f_i\colon (N_{i})_{\varphi_{i}}\congarrow N_{i+1}$ for $i=1,\dots, n-1$ such that there exists a diffeomorphism $\mathrm{rel}\ \partial W$
\[W\cong \partial_0W\times[0,1]\cup_{f_0}\tr(\varphi_1) \cup_{f_1}\tr(\varphi_2)\cup_{f_2}\dots\cup_{f_{n-1}} \tr(\varphi_{n})\cup_{f_{n}}\partial_1W\times[0,1]\]
and $(W,\psi_0,\psi_1)$ is called an \emph{admissible cobordism} if it admits an admissible handle decomposition. By the theory of handle cancellation developed by Smale \cite{smale_structure} (see also \cite{kervaire_barden} and \cite{wall_connectivity1}), a cobordism is admissible if the inclusion $\psi_1^{-1}\colon M_1\embeds W$ is $2$-connected. For a cobordism $W$ with an admissible handle decomposition $H$ we define the surgery map $\calS_{W,H}\colon \calR^+(M_0)\to\calR^+(M_1)$ by
\[\calS_{W,H}\coloneqq (\psi_1)_*\circ (f_n)_*\circ \calS_{\varphi_{n}}\circ\dots\circ(f_1)_*\circ\calS_{\varphi_1}\circ (f_0)_*\circ(\psi_0)^*.\]
\begin{lema}
	Let $d\ge7$. Then the homotopy class of $\calS_{W,H}$ is independent of the choice of admissible handle decomposition $H$. We will write $\calS_W\coloneqq\calS_{W,H}$. If the inclusion $\psi_0^{-1}\colon M_0\embeds W$ is $2$-connected as well, $\calS_W$ is a weak homotopy equivalence.
\end{lema}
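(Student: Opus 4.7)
The plan is to prove invariance of $\calS_{W,H}$ under the elementary moves relating any two admissible handle decompositions of $W$, and then invoke a parametric Morse theory result (Cerf theory and its higher-dimensional extensions via Hatcher--Wagoner, Igusa, and others) to conclude. Two preliminaries, both immediate from Chernysh's theorem and the construction of $\calS_\varphi$, will be used repeatedly: first, $\calS_\varphi$ depends up to homotopy only on the isotopy class of $\varphi$ among admissible framed embeddings $S^{k-1}\times D^{d-k}\embeds N$; second, surgeries along disjoint admissible embeddings commute up to homotopy.

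The uniqueness theory of handle decompositions reduces the problem to checking invariance under three elementary moves: (i) isotopy of a single attaching embedding, (ii) handle slides, and (iii) the birth or death of a pair of handles of consecutive indices $k, k+1$ whose attaching and belt spheres meet transversely in one point. Move (i) is the first preliminary. A handle slide is realized as an ambient isotopy of the attaching embedding of the later handle inside the trace of the earlier one, so move (ii) reduces to (i) via naturality of $\calS$ under the gluing diffeomorphisms $f_i$. The geometric crux is move (iii): I must show that for a cancelling pair, the composition $\calS_{\varphi_{i+1}}\circ\calS_{\varphi_i}$ is homotopic to the identity on $\calR^+(N_i)$. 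This is the parametric form of the classical fact that surgery and its inverse cancel; the combined cut-and-paste excises and replaces the same standard piece, and a second application of Chernysh's theorem to the spaces $\calR^+(\,\cdot\,,\varphi)$ identifies the resulting homotopy with the identity. The bound $d\ge 7$ enters here precisely because both members of a cancelling pair must satisfy $3\le k\le d-3$.

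Granted invariance under (i)--(iii), independence of the handle decomposition follows from the parametric Morse theorem: for $d\ge 7$, any two admissible handle decompositions of an admissible cobordism can be connected by a finite sequence of the moves (i)--(iii). For the weak equivalence assertion, if the inclusion $\psi_0^{-1}\colon M_0\embeds W$ is also $2$-connected then the reversed cobordism $\overline W$ (with $\partial_0 W$ and $\partial_1 W$ swapped) is again admissible. Stacking admissible handle decompositions of $W$ and $\overline W$ produces a handle decomposition of $W\cup_{M_1}\overline W$ in which every handle of $W$ is cancelled by its dual handle from $\overline W$; iterated applications of move (iii) then yield $\calS_{\overline W}\circ\calS_W\simeq\calS_{W\cup_{M_1}\overline W}\simeq\id_{\calR^+(M_0)}$, and the symmetric argument gives $\calS_W\circ\calS_{\overline W}\simeq\id_{\calR^+(M_1)}$.

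The principal obstacle will be move (iii): verifying that the concrete composition of a surgery with its inverse surgery is homotopic to the identity, rather than to some abstract equivalence, requires unpacking the cut-and-paste definition of $\calS_\varphi$ and constructing an explicit null-homotopy within the relevant fixed-form subspaces of metrics. Once this local cancellation is under control, every other step of the argument is either formal or reduces to the two preliminary facts listed above.
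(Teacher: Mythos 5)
Your overall strategy for the invariance statement matches the paper's: reduce to a finite list of elementary moves via parametrized Cerf theory (Igusa's two-index theorem is indeed what forces $d\ge7$ -- note it is needed there, for keeping the intermediate decompositions admissible, and \emph{not}, as you claim, because a cancelling pair must have both indices in $[3,d-3]$; in an admissible decomposition the constraint is only one-sided, $k\le d-4$ for the lower handle of a cancelling pair, and the cancellation verification itself works for $d\ge4$). However, your treatment of the cancellation move misdescribes what has to be proved. A cancelling pair is \emph{not} ``a surgery and its inverse'': it is a $k$-surgery $\varphi$ followed by a $(k+1)$-surgery $\varphi'$ whose attaching sphere meets the belt sphere of $\varphi$ transversely in one point, and the composite $\calS_{\varphi'}\circ\calS_\varphi$ does \emph{not} ``excise and replace the same standard piece.'' It genuinely alters the metric on an embedded disk $D^{d-1}\subset M$ containing both surgery regions, and one must prove that the resulting metric on that disk (with round boundary condition) is isotopic rel boundary to the torpedo metric one started with. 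This is a concrete computation on $S^{d-1}$ with mixed torpedo metrics $g^k_{\mtor}$ and Walsh's identification $(a^k)^*g^k_{\mtor}\sim g_\circ$, and it is the mathematical core of the lemma; you correctly flag it as the principal obstacle but do not supply it.

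The second, more serious problem is your argument for the weak-equivalence assertion. The double $W\cup_{M_1}\overline W$ is in general \emph{not} diffeomorphic rel boundary to $M_0\times[0,1]$, and the handles of $W$ do not cancel against their duals in $\overline W$: the dual of a $k$-handle is a $(d-k)$-handle, not a $(k+1)$-handle, and its attaching sphere is the belt sphere of the original handle itself rather than a sphere meeting it transversely in one point. (For instance, if $W=\tr(\varphi)$ is the trace of a single surgery, $W\cup_{M_1}W^{\op}\cong M_0\times I\ \#\ (S^k\times S^{d-k})$.) That the double nonetheless induces the identity on $\calR^+(M_0)$ is true, but it is a consequence of the \emph{later} surgery-invariance/cobordism-invariance results and cannot be used here. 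The correct argument is more direct: when $\psi_0^{-1}\colon M_0\embeds W$ is also $2$-connected, the two-index theorem provides a handle decomposition with all indices in $\{3,\dots,d-3\}$, and for such indices \emph{both} inclusions $\calR^+(M,\varphi)\embeds\calR^+(M)$ and $\calR^+(M_\varphi,\varphi^{\op})\embeds\calR^+(M_\varphi)$ in the defining zig-zag of $\calS_\varphi$ are weak equivalences by the parametrized surgery theorem, so $\calS_{W,H}$ is a composite of weak equivalences.
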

\begin{remark*}
	In \cite{walsh_parametrized2}, Walsh constructed a psc metric $g_H$ on $(W,H)$ that restricts to a given metric $g_0$ on $\partial_0W$. He shows that the homotopy class of $g_H$ is independent of $H$. Using boundary identifications $\psi_i$ this gives a well defined map $\calS_W\colon \pi_0(\calR^+(M_0))\to\pi_0(\calR^+(M_1))$. We adapt the proof from \cite{walsh_parametrized2} so that we obtain a well-defined homotopy class of a map of spaces inducing Walsh's map on $\pi_0$. 
\end{remark*}
\noindent To prove this one uses Cerf theory to show that different handle decompositions are related by a finite sequence of elementary moves. The parametrized handle exchange theorem of Igusa \cite{igusa_stability} ensures that these moves keep the handle decomposition admissible. Igusa's theorem is the point where $d\ge7$ is used. Next we show surgery invariance of $\calS_W$.

\begin{lema}
	Let $d\ge7$, let $M_0$, $M_1$ be two $(d-1)$-manifolds, let $W$ be an admissible cobordism and let $\Phi\colon S^{k-1}\times D^{d-k+1}\embeds \mathrm{Int }\ W$ be an embedding with $3\le k\le d-3$. Then $\calS_W \sim \calS_{W_\Phi}$.
\end{lema}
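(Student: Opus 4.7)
The plan is to build an admissible handle decomposition of $W_\Phi$ from one of $W$ by inserting two extra handles corresponding to the interior surgery on $\Phi$, and then to argue that these two extra surgery maps compose (up to homotopy) to the identity, directly from the Chernysh-based construction of $\calS_\varphi$.

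First I would fix an admissible handle decomposition $H=(\varphi_1,\dots,\varphi_n)$ of $W$. By general position (using $k-1\ge 2$) I can isotope $\Phi$ so that its image lies in a single intermediate cross-section $N_i$ and is disjoint from the attaching regions of the handles $\varphi_j$ for $j>i$. The restriction $\varphi\coloneqq\Phi|_{N_i}\colon S^{k-1}\times D^{d-k}\embeds N_i$ is then the attaching datum of a $k$-handle (the remaining normal direction of $\Phi$ is absorbed into the transverse Morse direction). I would then construct a handle decomposition $H_\Phi$ of $W_\Phi$ consisting of $\varphi_1,\dots,\varphi_i$, followed by $\varphi$, followed by the dual surgery $\varphi^{\op}\colon S^{d-k-1}\times D^{k}\embeds (N_i)_\varphi$ on the belt sphere of $\varphi$ (a $(d-k)$-handle), and finally $\varphi_{i+1},\dots,\varphi_n$ attached as in $H$. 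Both new handles are admissible in the given range since $d-k\ge 3$ and $k\ge 3$. A local Morse-theoretic computation confirms that the combined trace of $\varphi$ and $\varphi^{\op}$ near the image of $\Phi$ is diffeomorphic, rel boundary, to the trace of interior surgery on $\Phi(S^{k-1})$ in $W$, so $H_\Phi$ really does describe $W_\Phi$.

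Next I would show that $\calS_{\varphi^{\op}}\circ\calS_\varphi\simeq\id$ on $\calR^+(N_i)$. Writing $\calS_\varphi=\iota\circ c\circ\pi$, where $\pi$ is a homotopy inverse to the Chernysh inclusion $\calR^+(N_i,\varphi)\embeds\calR^+(N_i)$, $c$ is the cut-paste homeomorphism $\calR^+(N_i,\varphi)\congarrow \calR^+((N_i)_\varphi,\varphi^{\op})$, and $\iota$ is the Chernysh inclusion $\calR^+((N_i)_\varphi,\varphi^{\op})\embeds\calR^+((N_i)_\varphi)$, and likewise $\calS_{\varphi^{\op}}=\iota'\circ c^{-1}\circ\pi'$ with $\pi',\iota'$ the analogous maps for the dual surgery, one finds
\[
\calS_{\varphi^{\op}}\circ\calS_\varphi \;=\; \iota'\circ c^{-1}\circ(\pi'\circ\iota)\circ c\circ\pi \;\simeq\; \iota'\circ(c^{-1}\circ c)\circ\pi \;=\; \iota'\circ\pi \;\simeq\; \id,
\]
using $\pi'\circ\iota\simeq\id$ and $\iota'\circ\pi\simeq\id$ by Chernysh's theorem and $c^{-1}\circ c=\id$ since $c$ is a homeomorphism. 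Combining this with the preceding lemma yields
\[
\calS_{W_\Phi}\simeq\calS_{W_\Phi,H_\Phi} \;=\; \calS_{\varphi_n}\circ\cdots\circ\calS_{\varphi_{i+1}}\circ\calS_{\varphi^{\op}}\circ\calS_\varphi\circ\calS_{\varphi_i}\circ\cdots\circ\calS_{\varphi_1} \;\simeq\; \calS_{W,H} \;\simeq\; \calS_W.
\]

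The main technical point is the local identification made in the first paragraph: interior surgery on an unknotted $(k-1)$-sphere must be realised by attaching a pair of handles of indices $k$ and $d-k$ along a dual pair of attaching spheres. These two handles are \emph{not} a Cerf cancelling pair in the strict sense (their indices are adjacent only when $d=2k\pm 1$), yet their combined trace is the complement of two small open balls in a local $S^k\times S^{d-k}$ summand, which is exactly the local trace cobordism of interior surgery on $\Phi$. Once this local model is in place, compatibility with the attaching data of the later handles $\varphi_{i+1},\dots,\varphi_n$ is ensured by the disjointness arranged at the start.
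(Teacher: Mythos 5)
The second half of your argument is sound: the computation $\calS_{\varphi^{\op}}\circ\calS_\varphi\simeq\id$ from the Chernysh zig-zag is correct (the two inclusions you invert are weak equivalences precisely because $k\ge3$ and $d-k\ge3$), and it plays the same role as the identity $\overline\calS_{W_1\cup W_1^{\op}}=\id$ in the paper's proof. The genuine gap is in your first paragraph, where you assert that ``by general position (using $k-1\ge2$)'' the embedding $\Phi$ can be isotoped so that its image sits over a single regular level $N_i$ as a thickened surgery datum. This is not a general-position statement and it is not justified. First, the core sphere $\Phi(S^{k-1}\times\{0\})$ need not be isotopic into any level set of the decomposition: if $2k\ge d+2$ there are handle indices $p$ with $d-k+1\le p\le k-1$ for which a transverse $(k-1)$-sphere meets both the ascending and the descending manifold of the critical point in stably nonempty dimension, so it cannot be pushed off the critical level by transversality and gradient flow, and it may represent a homotopy class not carried by any $N_i$. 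Second, even when the core sphere lies in $N_i$, the trivialisation of its normal bundle supplied by $\Phi$ must be deformed to one splitting off the Morse direction; the obstruction lives in $\pi_{k-1}(S^{d-k})$, which is nonzero once $k-1\ge d-k$, and you do not address it. Since the lemma is asserted for arbitrary $\Phi$ in the range $3\le k\le d-3$, these cases cannot be excluded, and the condition $k-1\ge2$ you cite is irrelevant to them (it is what makes $W_\Phi$ admissible, not what positions $\Phi$).

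The paper avoids the positioning problem entirely for $k\ne3$: it sets $W_1$ to be a boundary connected sum of a collar of $M_1$ with $\im\Phi$, taken wherever $\Phi$ happens to sit, checks admissibility of $W_0$, $W_1$, $W_1'$ and $W_1^{\op}$ by connectivity arguments, and reduces the claim to the elementary diffeomorphism $(S^{k-1}\times D^{d-k+1})\cup(D^{k}\times S^{d-k})\cong S^d$, so that $W_1^{\op}\cup W_1'$ is a cylinder. Only for $k=3$ does it carry out your strategy of isotoping $\Phi$ into product position (inside the collar of $M_1$) and identifying the result with $\tr\varphi\cup(\tr\varphi)^{\op}$; there the required isotopy is exactly a $0$-connectivity statement for maps of embedding spaces, proved via Smale--Hirsch and the Whitney embedding theorem using that the core sphere is $S^2$ and $(W,M_1)$ is $2$-connected. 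That this dedicated lemma is needed even in the easiest case shows how much input your first step requires. To repair your proof you would either have to establish the positioning statement for all $k$ (which I do not believe is true as stated) or replace it by a decomposition argument of the paper's type.
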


\noindent Now we are able to derive the general cobordism theoretic result. Let $\hat\Omega_d^\Spin$ denote the following category: objects are given by simply connected, $(d-1)$-dimensional $\Spin$-manifolds $M$ and morphisms from $M_0$ to $M_1$ are given by cobordism classes of $d$-dimensional $\Spin$-cobordisms $(W,\psi_0,\psi_1)$. Note that every such cobordism class contains an admissible cobordism and two admissible cobordisms in the same class are related by a sequence of surgeries satisfying the index constraints from the previous Lemma. 

\begin{thma}\label{thm:a-general}
	Let $d\ge7$. Then there exists a functor $\calS\colon\hat\Omega_d^\Spin\too \hotop$ into the homotopy category of spaces with the following properties:
	\begin{enumerate}
		\item On objects, $\calS$ is given by $\calS(M)=\calR^+(M)$,
		\item if $f\colon M_1\to M_0$ is a diffeomorphism, then $\calS(M_0\times[0,1],\id,f^{-1}) = f^*$,
		\item if $\alpha\in\hat\Omega_d^\Spin(M_0,M_1)$ is represented by $(\tr(\varphi), \id,\id)$ for $\tr(\varphi)$ the trace of a surgery datum $\varphi$ with codimension at least $3$, then $\calS(\alpha) = \calS_\varphi$.
	\end{enumerate}
	Furthermore, $\calS$ is uniquely determined by these properties, up to natural isomorphism.
\end{thma}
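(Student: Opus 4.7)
The plan is to define $\calS$ on morphisms by selecting an admissible representative in each Spin cobordism class and applying the smooth cobordism map $\calS_W$ constructed in the preceding lemmas. Given $\alpha \in \hat\Omega_d^\Spin(M_0, M_1)$ represented by a Spin cobordism $(W, \psi_0, \psi_1)$, I would replace $W$ by a smoothly cobordant admissible cobordism (which exists by the handle-trading argument cited in the paragraph preceding the theorem, using simple connectedness of both ends and $d \ge 7$) and set $\calS(\alpha) := \calS_W$. Note that the Spin structure only enters through the bordism relation on morphisms, not in the construction of $\calS_W$ itself, so this assignment is automatically invariant under the finer Spin-bordism equivalence.

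Well-definedness follows in two steps: independence of the admissible handle decomposition of a fixed $W$ is the content of the first preceding lemma, while independence of the admissible representative within the cobordism class is provided by the setup remark --- any two admissible representatives are related by a finite sequence of interior surgeries of index $k \in [3, d-3]$ --- combined with the second preceding lemma. For composition, given admissible $W \colon M_0 \rsto M_1$ and $W' \colon M_1 \rsto M_2$, the gluing $W \cup_{M_1} W'$ inherits a concatenated admissible handle decomposition $H * H'$ in which the identification $(\psi_0')^{-1} \circ \psi_1$ appears as an interior diffeomorphism. Unravelling the defining formula for $\calS_{W \cup W', H*H'}$ shows that it agrees with the composite $\calS_{W', H'} \circ \calS_{W, H}$, which by the first preceding lemma yields functoriality in $\hotop$. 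Identities correspond to the trivial cylinder with the empty handle decomposition. Property (1) is built into the definition; (2) follows by taking the empty handle decomposition on $(M_0 \times [0,1], \id, f^{-1})$, for which the formula collapses to $(f^{-1})_* = f^*$; and (3) is immediate from the one-handle decomposition consisting of $\varphi$ itself.

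Uniqueness follows because every admissible cobordism factors, via its handle decomposition, as an alternating composition of mapping cylinders of the $f_i$ and traces $\tr(\varphi_i)$ with codimension $\ge 3$, and a functor satisfying (1)--(3) is prescribed on each such building block. The main obstacle is the underlying claim that two admissible cobordisms in the same class are related by a sequence of surgeries with indices confined to $[3, d-3]$; outside this range the intermediate cobordisms would fail to be admissible and the second preceding lemma would not apply. This requires combining the Cerf-theoretic classification of handle decompositions up to elementary moves with Igusa's parametrized handle exchange theorem to absorb low- and high-index moves, and it is precisely here that the hypothesis $d \ge 7$ is invoked. Everything else in the argument is a formal bookkeeping exercise once this input and the two preceding lemmas are in hand.
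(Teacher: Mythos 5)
Your overall architecture is the same as the paper's: choose an admissible representative of each bordism class, apply the surgery map $\calS_{W,H}$, and reduce well-definedness to (a) independence of the admissible handle decomposition of a fixed $W$ and (b) invariance of $\calS_W$ under interior surgeries of restricted index. The functoriality, the observation that the $\Spin$-structure only enters through the equivalence relation, and the uniqueness argument via the generators $I_f$ and $S_\varphi$ all match the paper (which packages the uniqueness step as a presentation of the bordism category $\bord_d^{-1,2}$ by generators and relations).

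There are two points in your well-definedness step that need repair. First, the index range: if $V_0'$ and $V_1'$ are two admissible representatives, handle trading on the relative $(d+1)$-dimensional cobordism $\widetilde X$ between them (this is Kreck's surgery-below-the-middle-dimension lemma, \pref{Lemma}{lem:connectivity-of-bordisms}, not Cerf theory) only lets you make $(\widetilde X, V_i')$ $2$-connected for $i=0,1$, which yields interior surgeries of index $k\in\{3,\dots,d-2\}$ --- not $\{3,\dots,d-3\}$ as you claim. The index-$(d-2)$ surgeries violate the hypothesis of the surgery-invariance lemma, so your appeal to it fails for those. The paper's fix is to order the surgeries by increasing index, apply the invariance lemma to all surgeries of index $\le d-3$, and then observe that the remaining index-$(d-2)$ surgeries, read backwards, are index-$3$ surgeries from $V_1'$ to $V_0'$, to which the lemma does apply. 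Second, you locate the use of Igusa's parametrized handle exchange theorem and of the hypothesis $d\ge 7$ in this comparison of cobordant representatives; in fact that comparison only needs $d\ge 6$. Igusa's theorem and $d\ge 7$ are consumed inside the two lemmas you black-box: the independence of $\calS_{W,H}$ from the admissible handle decomposition $H$ (where one must connect two Morse functions with critical indices $\le d-3$ through generalized Morse functions respecting the same bound), and the surgery-invariance lemma itself.
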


\noindent This immediately implies \pref{Theorem}{thm:a}: For a closed $\Spin$-manifold $V$ let $\se(V)\coloneqq \calS(M\times[0,1]\amalg V,\id,\id)$ and since $(M\times[0,1]\amalg T_f,\id,\id)$ is $\Spin$-cobordant to $(M\times[0,1],\id,f^{-1})$ the given diagram commutes.

\textbf{Acknowledgements.} This paper is a streamlined version of the author's PhD-thesis \cite{ownthesis} at WWU M\"unster. It is my great pleasure to thank Johannes Ebert for his guidance, lots of comments and many enlightening discussions. I also would like to thank Lukas Buggisch, Oliver Sommer and Rudolf Zeidler for many fruitful discussions.

\section{Handle decompositions and the surgery map}

\subsection{Spaces of Riemannian metrics}\label{sec:psc}
For a closed manifold $M$ we denote by $\calR(M)$ the contractible space of all Riemannian metrics on $M$ equipped with the (weak) Whitney $C^\infty$-topology. The subspace of metrics whose scalar curvature is strictly positive will be denoted by $\calR^+(M)$.

\begin{definition}\label{def:general-surgery-theorem}
	Let $M$ and $N$ be compact manifolds of dimension $d-1\ge0$ and let $\varphi\colon N\hookrightarrow M$ be an embedding. For a metric $g$ on $N$, we define
	\[\calR^+(M,\varphi;g):=\{h\in\calR^+(M)\colon \varphi^*h=g\}.\]
	For $N=\coprod_{i=1}^nS^{k_i-1}\times D^{d-k_i}$ and $g=\coprod_{i=1}^ng^{k_i-1}_\circ+g^{d-k_i}_{\tor}$ we write $\calR^+(M,\varphi):=\calR^+(M, \varphi;g)$. Here, $g^{k_i-1}_\circ$ denotes the round metric and $g^{d-k_i}_{\tor}$ a torpedo metric\footnote{A torpedo metric on $D^{d-k}$ is an $\ort(d-k)$-invariant metric of positive scalar curvature that restricts to the round metric on the boundary. For precise definitions see \cite{chernysh}, \cite{walsh_parametrized1} or \cite{ebertfrenck}.}. If there is no chance of confusion, we will omit the dimension of these metrics.
\end{definition}
		
			\noindent There is the following generalization of the famous Gromov--Lawson--Schoen--Yau surgery theorem (cf. \cite{gromovlawson_classification} and \cite{schoenyau}) which is originally due to Chernysh \cite{chernysh} and has been first published by Walsh \cite{walsh_cobordism}. A detailed exposition of Chernysh's proof can be found in \cite{ebertfrenck}. Let $M$ be a $(d-1)$-manifold and for $i=1,\dots, n$ let $N_i$ be closed manifolds of dimension $(k_i-1)$. Let $d-k_i\ge3$ for all $i$ and let $g_{N_i}$ be metrics on $N_i$ such that $\scal(g_{N_i} + g_{\tor})>0$. Let $N:=\coprod_{i=1}^nN_i\times D^{d-k_i}$, $g:=\coprod_{i=1}^ng_{N_i} + g_{\tor}$ and let $\varphi\colon N\embeds M$ be an embedding.

		\begin{thm}[Parametrized Surgery Theorem {\cite[Theorem 1.1]{chernysh}, \cite[Main Theorem]{walsh_cobordism}}]\label{thm:chernysh}
			The map
			\[\calR^+(M,\varphi; g)\hookrightarrow \calR^+(M)\]
			is a weak homotopy equivalence. In particular, if $M_1$ is obtained from $M_0$ by performing surgery along $\varphi\colon S^{k-1}\times D^{d-k}\embeds M_0$ of index $k \le d-3$ then there exists a zig-zag of maps 
			\[\calR^+(M_0)\overset\simeq\hookleftarrow\calR^+(M_0,\varphi) \overset\cong\longrightarrow \calR^+(M_1,\varphi^{\op})\hookrightarrow\calR^+(M_1).\]
			If furthermore $k\ge 3$, the rightmost map in this composition is also a weak equivalence and we obtain a zig-zag of weak equivalences from $\calR^+(M_0)$ to $\calR^+(M_1)$.
		\end{thm}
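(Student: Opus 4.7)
The plan is to deduce all the stated assertions from the first — that the inclusion $\calR^+(M,\varphi;g)\hookrightarrow\calR^+(M)$ is a weak homotopy equivalence — by cut-and-paste and by re-applying the first statement to the dual surgery. So the real work is the first statement.

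To prove it, I would follow Chernysh's strategy of filtering $\calR^+(M)$ by nested subspaces of psc metrics whose restriction to $\varphi(N)$ is progressively closer to the standard form $g$, and showing each inclusion in the filtration is a weak equivalence by an explicit deformation retract. The essential geometric input is a parametrized Gromov--Lawson construction: for any compact family $(g_x)_{x\in K}$ of psc metrics with $K$ a disk, and with the restriction to $\partial K$ already in standard form on the tube, there exists a homotopy rel $\partial K$ through psc metrics ending at a family that is standard on a fixed tubular neighborhood of $\varphi(N)$. The codimension hypothesis $d-k\ge 3$ enters crucially: it guarantees that the model $g_\circ^{k-1}+g_\tor^{d-k}$ and all the "bent" interpolating metrics have positive scalar curvature, and compactness of $K$ then lets one make the scalar-curvature estimates uniform in the parameter.

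Granting the first statement, the zig-zag is formal. The leftmost inclusion is a weak equivalence by the first statement applied to $\varphi$. The middle arrow is the homeomorphism $\calR^+(M_0,\varphi)\cong\calR^+(M_1,\varphi^{\op})$ realised by cutting out $\varphi(S^{k-1}\times D^{d-k})$ with its standard metric $g_\circ^{k-1}+g_\tor^{d-k}$ and gluing in $D^k\times S^{d-k-1}$ with $g_\tor^k+g_\circ^{d-k-1}$; both sides restrict to $g_\circ^{k-1}+g_\circ^{d-k-1}$ on the common corner sphere, so the glued metric is smooth and automatically has positive scalar curvature. The rightmost inclusion has the form of the first statement applied to $\varphi^{\op}\colon S^{d-k-1}\times D^{k}\embeds M_1$, which has codimension $k$; hence the hypothesis $k\ge 3$ yields the weak equivalence in the last clause.

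The main obstacle — and indeed the entire technical heart of \cite{chernysh} and \cite{ebertfrenck} — is the parametrized Gromov--Lawson step. For a single metric, the construction requires choosing a hypersurface over $\varphi(N)$ with large mean curvature and interpolating to standard form while preserving positivity of the scalar curvature; the parametrized version further requires that these choices vary continuously over $K$ with uniform estimates, and that the construction match a boundary family already in standard form. This forces a careful development of parametrized torpedo and bent-torpedo metrics together with their isotopies.
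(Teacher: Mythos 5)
The paper does not prove this theorem: it is imported verbatim from Chernysh and Walsh (with a detailed exposition in \cite{ebertfrenck}), so there is no in-paper argument to compare against. Your outline correctly reproduces the strategy of those references — the formal reduction of the zig-zag to the first statement, the cut-and-paste homeomorphism $\calR^+(M_0,\varphi)\cong\calR^+(M_1,\varphi^{\op})$, the application of the first statement to $\varphi^{\op}$ (codimension $k\ge3$) for the last clause, and the parametrized Gromov--Lawson construction with the codimension-$\ge3$ hypothesis as the technical core — so it is consistent with the proof the paper relies on.
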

		
	\begin{remark}\label{rem:weak-equivalence}
		The space $\calR^+(M)$ is homotopy equivalent to a $CW$-complex (see \cite[Theorem 13]{palais_infinite-dimensional}). By Whitehead's theorem, a weak homotopy equivalence of $CW$-complexes is an actual homotopy equivalence. Therefore we may assume that weak homotopy equivalences of $\calR^+(M)$ have actual homotopy-inverses. 
	\end{remark}

\subsection{Handle decompositions of cobordisms}\label{sec:handle-decomposition}

In this section we discuss handle decompositions of a cobordism $W$. First, we give a model for attaching a handle. We adapt the one given in {\cite[Construction 8.1]{perlmutter_parametrized}} which is convenient. 

\begin{constr}[Standard trace]\label{con:standard-trace}
	Let $\epsilon\in(0,\frac14)$ be fixed and let $k\in\{0,\dots,d\}$. We fix once and for all an $\ort(k)\times\ort(d-k)$-invariant submanifold
	\[T_k\subset[0,1]\times D^k\times D^{d-k}\]
	with the following properties (see \pref{Figure}{fig:standard-trace} for a visualization)
	\begin{enumerate}
		\item $(s,0,0)\in T_k$ if and only if $s=\frac12$.
		\item The projection $T_k\overset{pr}\longrightarrow[0,1]$ is a Morse function and $(\frac12,0,0)$ is the only critical point of this Morse function. Its index is $k$. 
		\item We have the following equalities for intersections
			\begin{align*}
				T_k\cap([0,\epsilon)\times S^{k-1}\times D^{d-k}) &= [0,\epsilon)\times S^{k-1}\times D^{d-k}\\
				T_k\cap((1-\epsilon,1]\times D^{k}\times S^{d-k-1}) &= (1-\epsilon,1]\times D^{k}\times S^{d-k-1}\\
				T_k\cap([0,1]\times S^{k-1}\times S^{d-k-1}) &= [0,1]\times S^{k-1}\times S^{d-k-1} 
			\end{align*}
		\item The boundary of $T_k$ is given by 
			\[\partial T_k = (\{0\}\times S^{k-1}\times D^{d-k})\cup (\{1\}\times D^{k}\times S^{d-k-1})\cup([0,1]\times S^{k-1}\times S^{d-k-1}).\]
	\end{enumerate}
	We call $T_k$ the \emph{standard trace} of a $k$-surgery. 
	\begin{figure}[ht]
	\centering
	\includegraphics[width=15em]{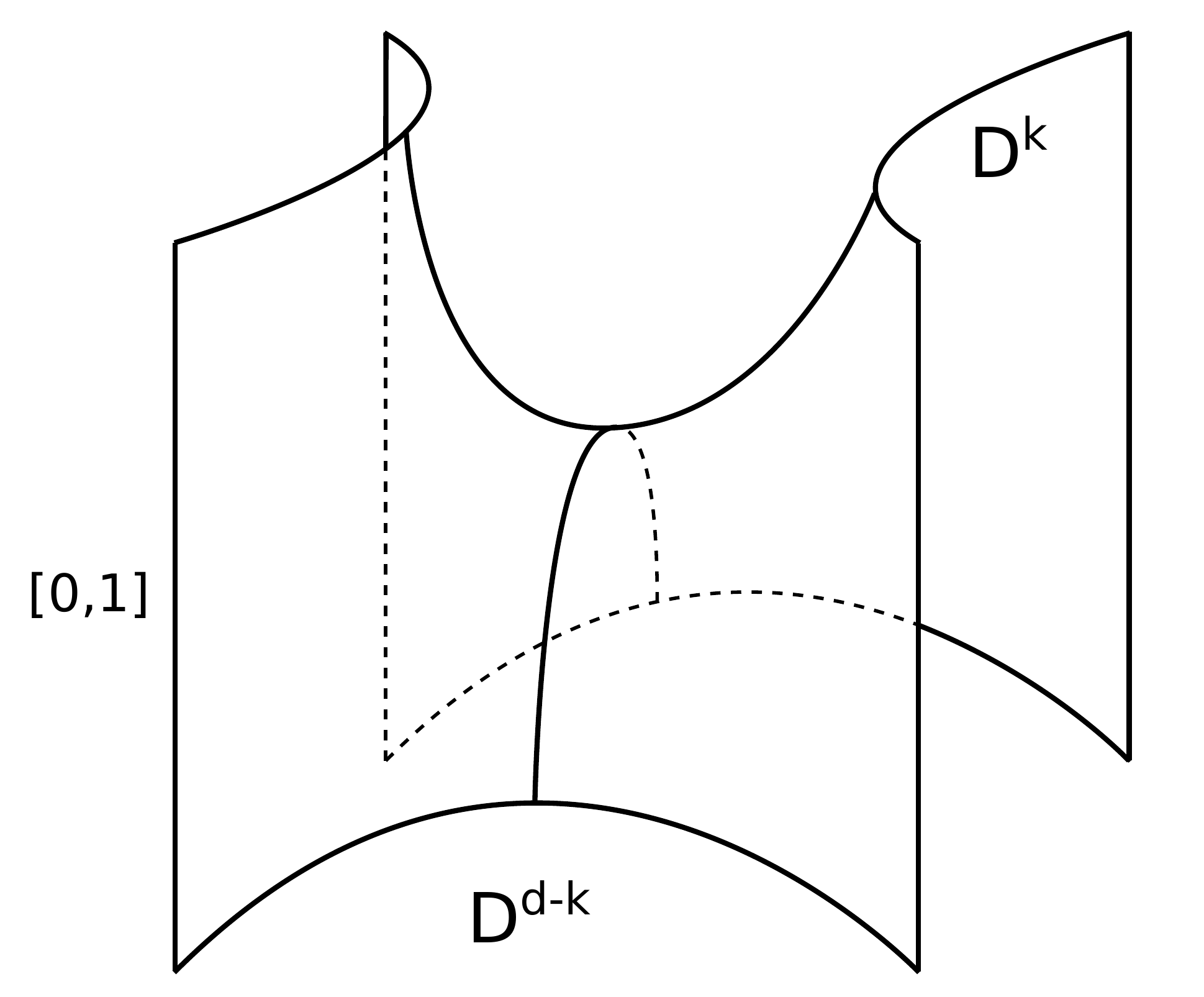}
	\caption{A standard trace.}\label{fig:standard-trace}
	\end{figure}
\end{constr}

\begin{definition}[Trace of a surgery]
	Let $M$ be a manifold and let $\varphi\colon S^{k-1}\times D^{d-k}\embeds M$ be an embedding. We call such an embedding a \emph{$k$-surgery datum in $M$} and we define the \emph{trace of $\varphi$} to be 	
	\[\tr(\varphi)\coloneqq \Bigl([0,1]\times (M\setminus \im\varphi)\Bigr) \cup_{\id_{[0,1]}\times \varphi} T_k.\]
	There is a Morse function $h_\varphi\colon \tr(\varphi)\to [0,1]$ with precisely one critical point with value $\frac12$ and index $k$. We define $M_\varphi:=h_\varphi^{-1}(1) \cong (M\setminus\im\varphi) \cup (D^k\times S^{d-k-1})$.
\end{definition}

\noindent For a surgery datum $\varphi$ in $M$ there is an obvious reversed surgery datum $\varphi^{\op}\colon S^{d-k-1}\times D^k\embeds M_\varphi$ and there is a canonical diffeomorphism $(M_\varphi)_{\varphi^{\op}}\cong M$. We define the \emph{attaching sphere of $\varphi$} to be $\varphi(S^{k-1}\times\{0\})\subset M$ and the \emph{belt sphere of $\varphi$} as $\varphi^{\op}(\{0\}\times S^{d-k-1})\subset M_\varphi$.

\begin{definition}\label{def:handle-decomposition}
	\begin{enumerate}
		\item Let $(W,\psi_0,\psi_1)\colon M_0\leadsto M_1$ be a cobordism and let $\varphi\colon S^{k-1}\times D^{d-k}\embeds M_1$ be an embedding. We define the manifold \emph{$W$ with a $k$-handle attached along $\varphi$} to be $(W\cup_{\psi_1} \tr(\varphi),\psi_0,\id)$. 
		\item A \emph{handle decomposition of $(W,\psi_0,\psi_1)\colon M_0\leadsto M_1$} is a collection of manifolds $N_1,\dots, N_{n}$, embeddings $\varphi_i\colon S^{k_i-1}\times D^{d-k_i}\embeds N_{i}$ for $i=1,\dots,n$ and diffeomorphisms $f_0\colon \partial_0W\congarrow N_1$, $f_n\colon (N_n)_{\varphi_n}\congarrow \partial_1W$ and $f_i\colon (N_{i})_{\varphi_{i}}\congarrow N_{i+1}$ for $i=1,\dots, n-1$ such that there exists a diffeomorphism $\mathrm{rel}\ \partial W$
	\[W\cong \partial_0W\times[0,1]\cup_{f_0}\tr(\varphi_1) \cup_{f_1}\tr(\varphi_2)\cup_{f_2}\dots\cup_{f_{n-1}} \tr(\varphi_{n})\cup_{f_{n}}\partial_1W\times[0,1]\]
	We call $f_i$ the \emph{identifying diffeomorphisms} and $\varphi_i$ the \emph{surgery data}.
	\end{enumerate}
\end{definition}

\begin{remark}\label{rem:diffeomorphism-and-surgery}
	For a diffeomorphism $f\colon M_0\congarrow M_1$ and a surgery datum $\varphi$ in $M_0$ there exists a canonical induced diffeomorphism $F\colon \tr\varphi\to\tr(f\circ\varphi)$ that restricts to $f$ on the incoming boundary and to a diffeomorphism $f_\varphi\colon (M_0)_\varphi\to (M_1)_{f\circ\varphi}$ such that $f_\varphi$ is equal to $f$ on $M_0\setminus\im\varphi$ and $f_\varphi\circ \varphi^{\op} = (f\circ\varphi)^{\op}$ on the outgoing boundary.
\end{remark}

\noindent In order to compare different handle decompositions of a manifold, we need to describe a model for handle cancellation. Let $W\colon M_0\leadsto M_1$ be a cobordism which has a handle decomposition with two handles\footnote{For ease of notation we assume that all boundary identifications are given by the identity.}: Let $\varphi\colon S^{k-1}\times D^{d-k}\embeds M_0$ and $\varphi'\colon S^k\times D^{d-k-1}\embeds (M_0)_\varphi$ be two surgery data such that the belt sphere of $\varphi$ and the attaching sphere of $\varphi'$ intersect transversely in a single point. By \cite[Theorem 5.4.3]{wall_difftop} there exists an embedding of a disk $D^{d-1}\cong D\subset M_0$ such that $\im\varphi\subset D$ and $\im\varphi'\subset D_{\varphi}$. Therefore it suffices to have a closer look at handle cancellation on the sphere. Let $M_0 =  D\cup D' = S^{d-1}$ where $D'$ is another disk. Let $a\in S^{d-k-1}$ and $b\in S^k$ such that $\varphi^\op(0,a)=\varphi'(b,0)$ is the unique intersection point. Since the belt sphere of $\varphi$ and the attaching sphere of $\varphi'$ intersect transversally here, there is a disc $S^{k}_+\subset S^k$ such that $\varphi'(S^{k}_+\times \{0\})=\varphi^{op}(D^k\times\{a\})$ after possibly changing the coordinates of $D$. Let $S^k_-\coloneqq\overline{S^k\setminus S^k_+}$. Then $\varphi'(S^k_-\times\{0\})\subset M\setminus\im\varphi$ (see \pref{Figure}{fig:solid-torus}). 

\begin{figure}[ht]
\centering
\includegraphics[width=0.95\textwidth]{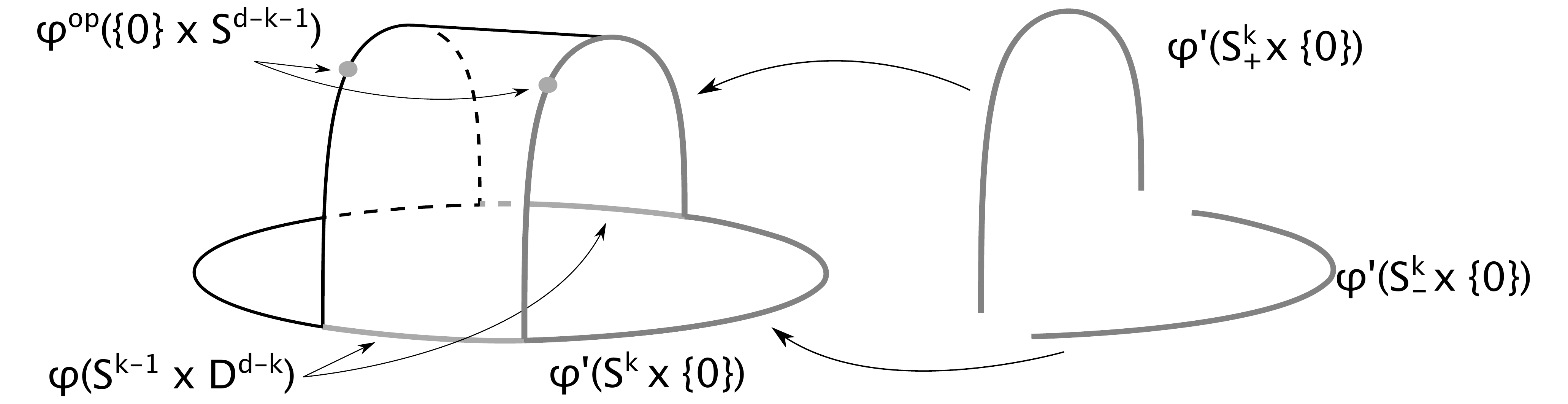}
\caption{}\label{fig:solid-torus}
\end{figure} 

\noindent Because of transversality we may isotopy $\varphi'$ such that $\varphi'(S^k_-\times D^{d-k-1})\subset M\setminus\im\varphi$. Then $\varphi(S^{k-1}\times D^{d-k})\cup\varphi'(S^k_-\times D^{d-k-1})\cong D^{d-1}$ (cf. \cite[Lemma 5.4.2.]{wall_difftop}) and also $A\coloneqq \overline{S^{d-1}\setminus \bigr(\varphi(S^{k-1}\times D^{d-k})\cup\varphi'(S^k_-\times D^{d-k-1})\bigr)}\cong D^{d-1}$. By choosing an identification $A\cong D^k\times D^{d-k-1}$ we have $\varphi'(S^k_-\times D^{d-k-1})\cup A \cong S^k\times D^{d-k-1}$. We see that
\[S^{d-1} = \bigr(\varphi(S^{k-1}\times D^{d-k})\cup\underbrace{\varphi'(S^k_-\times D^{d-k-1})\bigr)\cup A}_{\cong S^k\times D^{d-k-1}}\]
and hence we can change coordinates on $S^{d-1}$ by changing the embedding $D^{d-1}\embeds M$ such that $\varphi$ is the embedding of the first factor of the solid torus decomposition 
\[a^k\colon S^{d-1} \congarrow(S^{k-1}\times D^{d-k})\cup (D^k\times S^{d-k-1}),\] 
\ie $a_k\circ\varphi = \iota_{(S^{k-1}\times D^{d-k})}$. We get an induced map
{ \[a^k_\varphi\colon S^{d-1}_\varphi \congarrow (D^k\times S^{d-k-1})\cup (D^k\times S^{d-k-1}),\]} where we identify $(D^k\times S^{d-k-1})\cup (D^k\times S^{d-k-1}) = S^k\times S^{d-k-1} = (S^k\times D^{d-k-1})\cup (S^k\times D^{d-k-1})$. Because of transversality we may isotope $\varphi'$ so that $(a^k_\varphi)\circ \varphi'$ is equal to the inclusion of the first factor in $S^k\times D^{d-k-1}\cup S^k\times D^{d-k-1}$. Then
\[(a^k_\varphi)_{\varphi'}\colon (S^{d-1}_\varphi)_{\varphi'} \congarrow D^{k+1}\times S^{d-k-2}\cup S^k\times D^{d-k-1}\]
This is a solid torus decomposition of $(S^{d-1}_\varphi)_{\varphi'}$. We get a diffeomorphism $H_k\colon S^{d-1}\times [0,2]\congarrow\tr(\varphi)\cup\tr(\varphi')$ which fixes the strip $D'\times[0,2]\subset (S^{d-1})\times[0,2]$ and the lower boundary point-wise. We may also assume that $H_k$ restricts on the upper boundary to a diffeomorphism $\eta_k\colon S^{d-1}\congarrow (S^{d-1}_\varphi)_{\varphi'}$ which translates $(a^k_\varphi)_{\varphi'}$ into the solid torus decomposition $a^{k+1}$, \ie $\bigl((a^k_{\varphi})_{\varphi'}\circ\eta_k\bigr) = a^{k+1}$. For every $k\in\{0,\dots, d\}$ we fix the diffeomorphisms $H_k$ (and hence $\eta_k$) once and for all. The following proposition is well known and can be proven by analyzing paths of generalized Morse functions using Cerf theory (see \cite[Theorem 3.4]{connectedcerf} or \cite[Proposition 1.5.7]{ownthesis}).

	\begin{prop}\label{prop:handle-decomposition-difference}
	Let $d\ge7$. Then any two handle decompositions of $W$ only differ by a finite sequence of the following moves:
	\begin{enumerate}
		\item An identifying diffeomorphism is replaced by an isotopic one.
		\item A surgery datum is replaced by an isotopic one.
		\item A $k$-surgery datum is precomposed with an element $A\in \ort(k)\times\ort(d-k)$.
		\item The order of two surgery data with disjoint images is changed.
		\item Let  $\varphi$ and $\varphi'$ be $k$- and $(k+1)$-surgery data such that the belt sphere of $\varphi$ and the attaching sphere of $\varphi'$ intersect transversally in a single point. Then the two handles are replaced by the identifying diffeomorphism $\id\ \#\ \eta_k$.
	\end{enumerate}
	\end{prop}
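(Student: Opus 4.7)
The strategy is to translate handle decompositions into Morse-theoretic data and then invoke Cerf's one-parameter theorem on the space of smooth functions on $W$. A handle decomposition of $(W,\psi_0,\psi_1)$ as in Definition 2.5 is equivalent to the data of an ordered Morse function $f\colon W\to[0,1]$ with $f^{-1}(i)=\partial_iW$ and all critical values placed in strictly increasing order, together with a gradient-like vector field $\xi$ and, at each critical point $p_i$ of index $k_i$, a framing $\tau_i$ of the $\xi$-unstable and stable disk bundles. The framing together with the standard trace of Construction 2.2 determines the surgery datum $\varphi_i$, while the $\xi$-flow between successive critical levels produces the identifying diffeomorphisms $f_i$. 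Conversely, gluing standard traces recovers such a Morse triple from a handle decomposition. Under this dictionary, isotopies of $\xi$ and of the framings correspond precisely to moves (1), (2), and (3) of the proposition.

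Next, I would apply Cerf's theorem (as refined by Hatcher--Wagoner and, in the form used in the cited reference, by Igusa): for $d\ge 7$, any two ordered Morse triples on $W$ with identical boundary behaviour are connected by a smooth path $(f_t,\xi_t,\tau_t)_{t\in[0,1]}$ such that $f_t$ is Morse for all but finitely many $t$, and at each exceptional parameter exactly one of the following codimension-one events occurs: either two critical values of $f_t$ cross, or a birth/death singularity of type $A_2$ arises, creating or removing a pair of critical points of consecutive indices $k$ and $k+1$. Along the path $\xi_t$ and $\tau_t$ are interpolated generically, contributing only the moves (1)--(3) already handled above.

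Finally, I would match each Cerf event to a move in the proposition. A crossing of two critical values whose descending and ascending manifolds remain disjoint in the intermediate level translates directly into move (4). A crossing in which a $(k+1)$-handle slides over a lower-index handle is realized as an isotopy of the higher-index attaching sphere in the intermediate level, hence fits into move (2) after adjusting the relevant $f_i$ by move (1). A birth/death of a $(k,k+1)$-pair is precisely move (5): by the local uniqueness of the $A_2$-singularity and transversality of the belt and attaching spheres in the intermediate level, any such event is equivalent after a further isotopy to the standard cancellation model $H_k$ with identifying diffeomorphism $\eta_k$ fixed in the discussion preceding the proposition.

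The main obstacle will be the last step, namely standardizing an arbitrary $A_2$-birth to the fixed model $(H_k,\eta_k)$, and treating the handle-slide case. Both rely on the Whitney trick and on the connectedness of the appropriate stratum in the space of generalized Morse functions, which is where the hypothesis $d\ge 7$ enters: it guarantees enough ambient dimension to push attaching and belt spheres into general position and to apply Cerf's theorem in the form stated. The remaining details are essentially a bookkeeping exercise once the dictionary between Morse triples and handle decompositions is in place.
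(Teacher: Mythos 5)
Your proposal is correct and follows essentially the same route as the paper, which proves this proposition exactly by analyzing paths of generalized Morse functions via Cerf theory (deferring the details to the cited references) and matching critical-value crossings, handle slides, and birth--death events to moves (1)--(5). The only minor imprecision is your attribution of the hypothesis $d\ge7$: Cerf's one-parameter theorem itself needs no such restriction, and in the paper that bound is really consumed later, by Igusa's parametrized handle exchange theorem in the index-constrained variant (\pref{Proposition}{prop:handle-decomposition-difference-ab}).
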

	
\subsection{Hatcher-Igusa's 2-index theorem}
	Since \pref{Theorem}{thm:chernysh} has restrictions on the indices of surgery data, we need to consider handle decompositions with index constraints. Let $(W^d,\psi_0,\psi_1)\colon M_0\leadsto M_1$ be a cobordism. 
\begin{definition}
	$(W,\psi_0,\psi_1)$ is called \emph{admissible} if $\psi_1^{-1}\colon M_1\embeds W$ is $2$-connected. An \emph{admissible handle decomposition} is a handle decomposition where all surgery data $\varphi_i\colon S^{k_i-1}\times D^{d-k_i}\embeds N_{i}$ satisfy $k_i\le d-3$. 
\end{definition} 

\begin{remark}\label{rem:wall}
	It follows from the proof of the $h$-cobordism theorem due to Smale \cite{smale_structure} (see also \cite{kervaire_barden} and \cite{wall_connectivity1}) that every admissible cobordism admits an admissible handle decomposition. 
\end{remark}

	\noindent Next we want to analyze different admissible handle decompositions. Recall that a \emph{birth-death-singularity} of a function $f\colon W^d\to \bbR$ is a point $p\in W$ for which there exist coordinates $(x_1,\dots,x_d)$ around $p$ such that $f(x) = f(p) + x_1^3 + \sum_{i=2}^\lambda - x_i^2 + \sum_{i=\lambda+1}^d  x_i^2$ near $p$. In this case we call $(\lambda-1)$ \emph{the index of $f$ at $p$}. A function $f\colon W\to \bbR$ that has only non-degenerate and birth-death-singularities is called a \emph{generalized Morse function}.
	
	\begin{definition}
		We define $\calH(W)$ to be the space of generalized Morse functions on $W$. For $i\le j\in\{0,\dots,d\}$ we denote by $\calH_{i,j}(W)$ the space of generalized Morse functions such that non- degenerate critical points have index in $\{i,\dots, j\}$ and birth-death-singularities have index in $\{i,\dots,j-1\}$.
	\end{definition}

	\begin{thm}\label{thm:space-of-gmf-connected}
		Let $d\ge7$ and let $M_1\embeds W$ be $2$-connected. Then the space $\calH_{0,d-3}(W)$ is path-connected. If furthermore $M_0\embeds W$ is $2$-connected as well, the space $\calH_{3,d-3}(W)$ is path-connected, too. In particular, there exists a Morse-function without critical values of index $\{d-2,d-1,d\}$ or $\{0,1,2,d-2,d-1,d\}$ respectively.
	\end{thm}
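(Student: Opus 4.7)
\bigskip

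\noindent\textbf{Proof plan.} I would prove both parts as two applications of Igusa's parametrised 2-index theorem \cite{igusa_stability}, with the 2-connectedness hypothesis entering through the classical Smale/Wall handle-cancellation techniques that also underlie \pref{Remark}{rem:wall}.

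\medskip

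\noindent\emph{Step 1 (non-emptiness).} I would first check $\calH_{0,d-3}(W)\neq\emptyset$ and, under the stronger hypothesis, $\calH_{3,d-3}(W)\neq\emptyset$. For the first space this is precisely \pref{Remark}{rem:wall}: since $M_1\embeds W$ is $2$-connected, Smale's handle-trading procedure produces a handle decomposition of $W$ with no handles of index $d-2,d-1,d$, which one realises as a self-indexing Morse function lying in $\calH_{0,d-3}(W)$. If additionally $M_0\embeds W$ is $2$-connected then the same procedure applied to the dual cobordism $W^{\op}\colon M_1\rsto M_0$ eliminates the handles of index $0,1,2$ as well, producing an element of $\calH_{3,d-3}(W)$.

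\medskip

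\noindent\emph{Step 2 (path connectedness via Cerf).} Given $f_0,f_1\in\calH_{0,d-3}(W)$ I would start with any path $\gamma\colon[0,1]\to\calH(W)$ joining them; existence of such a path is classical Cerf theory \cite{connectedcerf}. After a small perturbation I may assume $\gamma$ is a \emph{generic} $1$-parameter family, so that only the elementary Cerf catastrophes occur, cf.\ the list of moves in \pref{Proposition}{prop:handle-decomposition-difference}: isotopy, reordering, handle-slides, and birth/death of cancelling critical-point pairs. The only way $\gamma$ can leave $\calH_{0,d-3}(W)$ is if a birth-death singularity of index $d-3$ appears at some $t\in(0,1)$, momentarily creating a critical point of index $d-2$.

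\medskip

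\noindent\emph{Step 3 (Igusa's 2-index theorem).} I would then invoke Igusa's parametrised handle-exchange theorem, which applies because $d\ge 7$ implies that the admissible index window $[0,d-3]$ has width $\ge 4$; this is Igusa's dimensional hypothesis ensuring that parametric transversality can be arranged. Concretely, Igusa's result says that whenever a high-index critical point of index $d-2$ is born along $\gamma$, it can be cancelled against a fresh lower-index birth, yielding a deformation of $\gamma$ (rel endpoints) that avoids index $d-2$ at all times. The homotopy-theoretic input required to push the cancelling attaching sphere into the correct relative homotopy class is precisely the vanishing of $\pi_{\le 2}(W,M_1)$, i.e.\ the $2$-connectedness hypothesis. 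Iterating over the finitely many catastrophe times in $\gamma$ produces a path inside $\calH_{0,d-3}(W)$, proving connectedness.

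\medskip

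\noindent\emph{Step 4 (symmetric case and main obstacle).} The statement for $\calH_{3,d-3}(W)$ follows by running Step 3 twice: first to eliminate critical points of index $>d-3$ using the $2$-connectedness of $M_1\embeds W$, then, after replacing $f$ by $c-f$ to swap the roles of $M_0$ and $M_1$ and send indices $k\mapsto d-k$, to eliminate critical points of index $<3$ using the $2$-connectedness of $M_0\embeds W$. The main obstacle in the write-up is purely expository: there is no independent geometric content, the real work is done inside Igusa's theorem, and the task is to verify that the index-window and connectivity hypotheses in his statement are met. The dimension bound $d\ge 7$ is sharp for this invocation, which is why it reappears as the standing hypothesis throughout the rest of the paper.
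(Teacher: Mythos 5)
Your proposal is correct and follows essentially the same route as the paper: both arguments rest on the path-connectedness of $\calH(W)$ together with Igusa's parametrized handle exchange theorem applied with $k=1$ (which is exactly where $d\ge7$ and the $2$-connectedness hypotheses enter, and which gives the $\pi_0$-injectivity needed to deform a path rel endpoints into the subspace), with the second statement handled by the dual version of that theorem. The paper merely packages your path-deformation argument as $1$-connectivity of the chain of inclusions $\calH_{3,d-3}(W)\to\cdots\to\calH_{0,d-3}(W)\to\cdots\to\calH(W)$, removing one forbidden index at a time rather than iterating over catastrophe times.
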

	
	\noindent This follows from the \emph{parametrized handle exchange theorem}. It was first proven by Hatcher \cite{hatcher_higher} \enquote{in a short and elegant paper which ignores most technical details} \cite[p. 5]{igusa_stability}. A complete and rigorous proof has been given by Igusa in \cite{igusa_stability}. Note that there is an index shift: Igusa considers $n+1$-dimensional cobordisms, whereas our cobordisms are $d$-dimensional.

	\newtheorem*{paramthm}{Parametrized Handle Exchange Theorem}
	\begin{paramthm}[{\cite[p. 211, Theorem 1.1]{igusa_stability}}]\label{cor:hatcher-igusa}
	Let $i,j,k\in\bbN$ and assume that 
		\begin{enumerate}
		\setlength\itemsep{-0.2em}
			\item $(W,M_0)$ is $i$-connected,
			\item $j\ge i+2$,
			\item $i\le d-k-2-\min(j-1,k-1)$,
			\item $i\le d-k-4.$
		\end{enumerate}
		Then the inclusion $\calH_{i+1,j}(W)\hookrightarrow\calH_{i,j}(W)$ is $k$-connected. There is a dual version of this: Assume that
		\begin{enumerate}
		\setlength\itemsep{-0.2em}
			\item $(W,M_1)$ is $d-j$-connected,
			\item $j\ge i+2$,
			\item $d-j\le d-k-2-\min(j-1,k-1)$,
			\item $d-j\le d-k-4.$
		\end{enumerate}
		Then the inclusion $\calH_{i,j-1}(W)\hookrightarrow\calH_{i,j}(W)$ is $k$-connected.
	\end{paramthm}

	\begin{proof}[Proof of \pref{Theorem}{thm:space-of-gmf-connected}]
		Consider the chain of maps 
		\begin{align*}
			\calH_{3,d-3}(W)&\to\calH_{2,d-3}(W)\to\calH_{1,d-3}(W)\to\calH_{0,d-3}(W)\to\\
				&\to\calH_{0,d-2}(W)\to\calH_{0,d-1}(W)\to\calH(W)
		\end{align*}
		If $M_1\embeds W$ is $2$-connected and $d\ge7$, the last three maps are $1$-connected. If $M_0\embeds W$ is $2$-connected, the first three maps are $1$-connected as well. The theorem follows as $\calH(W)$ is connected.
	\end{proof}
	
\begin{remark}
	There is a small mistake in \cite[Proof of Theorem 3.1]{walsh_parametrized2}, where he only requires $d\ge6$. But the map $\calH_{0,d-2}(W)\embeds\calH(W)$ is only $0$-connected, \ie $\pi_0$-surjective but not necessarily $\pi_0$-injective under this assumption. Therefore it does not follow, that $\calH_{0,d-2}(W)$ is path-connected as claimed in loc. cit.. However, if $d\ge7$ the map is not only $\pi_0$-injective but also $1$-connected which is more than needed.
\end{remark}

\noindent The following result can again be proven by analyzing paths of generalized Morse functions with index constraints: Any two admissible handle decompositions arise from a Morse function having only critical points of index $\le d-3$. By \pref{Theorem}{thm:space-of-gmf-connected} there exists a path of generalized Morse functions also having only critical points of index $\le d-3$ and birth-death-points of index $\le d-4$. The rest of the proof is analogous to the one of \pref{Proposition}{prop:handle-decomposition-difference} (again, see \cite[Theorem 3.4]{connectedcerf} or \cite[Proposition 1.5.7 and Proposition 1.6.4]{ownthesis}).

	\begin{prop}[{\cite[1.6.4]{ownthesis}}]\label{prop:handle-decomposition-difference-ab}
	Let $W\colon M_0\leadsto M_1$ be an admissible cobordism of dimension $d\ge7$. Then any two admissible handle decompositions of $W$ only differ by a finite sequence of the 5 moves from \pref{Proposition}{prop:handle-decomposition-difference} with the following difference:
	\begin{enumerate}
		\item[5'.] Let $k\le d-4$ and let $\varphi$ and $\varphi'$ be $k$- and $(k+1)$-surgery data such that the belt sphere of $\varphi$ and the attaching sphere of $\varphi'$ intersect transversally in a single point. Then the two handles are replaced by the identifying diffeomorphism $\id\ \#\ \eta_k$.
	\end{enumerate}
	\end{prop}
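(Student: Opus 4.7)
The plan is to mirror the proof of \pref{Proposition}{prop:handle-decomposition-difference}, but throughout the argument keep the path in the Cerf-theoretic picture inside the constrained space $\calH_{0,d-3}(W)$. We begin by reformulating admissible handle decompositions as Morse functions: given an admissible handle decomposition $H$, the standard traces give a Morse function $f_H\colon W\to[0,1]$ with $\partial_iW = f_H^{-1}(i)$, all of whose critical points have index $\le d-3$. Conversely, choosing a gradient-like vector field and orderings of critical values for any $f\in\calH_{0,d-3}(W)$ with $\partial_iW=f^{-1}(i)$ produces an admissible handle decomposition. The five moves of \pref{Proposition}{prop:handle-decomposition-difference} correspond to the basic codimension-zero and codimension-one modifications of such framed Morse data: isotopies of the identifying diffeomorphisms and of the attaching maps (moves 1--2), the $\ort(k)\times\ort(d-k)$-freedom in the framings at a critical point (move 3), the exchange of independent critical values (move 4), and the passage through a birth-death singularity of index $k$, which cancels (or creates) a pair of handles of indices $k$ and $k+1$ (move 5).

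Next I would apply \pref{Theorem}{thm:space-of-gmf-connected}: since $(W,\psi_0,\psi_1)$ is admissible, $\calH_{0,d-3}(W)$ is path-connected. Given two admissible handle decompositions $H_0,H_1$, choose corresponding $f_0,f_1\in\calH_{0,d-3}(W)$ and connect them by a path $(f_t)_{t\in[0,1]}$ in $\calH_{0,d-3}(W)$. After a small perturbation (supported in a neighbourhood of the path in the space of smooth functions), we may assume that $(f_t)$ is generic in the sense of Cerf theory: outside finitely many values of $t$ the function $f_t$ is Morse, and at each exceptional value exactly one of the standard codimension-one phenomena occurs --- either two critical values cross, or a birth/death of a cancelling pair of critical points occurs. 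Because the perturbation is small and $\calH_{0,d-3}(W)$ is open in the generalized-Morse stratum relevant here, we can arrange the generic path to remain inside $\calH_{0,d-3}(W)$.

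The crucial point is now the index constraint on birth-death events. By the definition of $\calH_{0,d-3}(W)$, every birth-death singularity along the path has index in $\{0,1,\dots,d-4\}$. Translated back into handle-theoretic language, a birth-death of index $k$ corresponds exactly to the cancellation (or creation) of a pair of handles of indices $k$ and $k+1$, both of which then lie in the admissible range $\le d-3$ since $k\le d-4$. This is precisely the constrained move 5'. Combined with the standard Cerf-theoretic identification of the remaining codimension-one events with moves 1--4, breaking the generic path at the finitely many critical parameter values expresses the transition from $H_0$ to $H_1$ as a finite sequence of the listed moves.

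The main obstacle is the technical step of upgrading the connectivity statement of \pref{Theorem}{thm:space-of-gmf-connected} to the existence of a path that is \emph{generic} in the sense needed to extract exactly the five elementary moves, while still respecting the index bounds on both the Morse points and the birth-death points. This is where the hypothesis $d\ge 7$ enters essentially: it is required both by Igusa's parametrized handle exchange theorem (to ensure $\calH_{0,d-3}(W)$ is path-connected at all) and to guarantee that one-parameter families can be made transverse to the birth-death strata without being forced to leave the admissible range. The remaining steps are the standard Cerf-theoretic dictionary between codimension-one singularities of paths of generalized Morse functions and the moves on handle decompositions, which is carried out in detail in \cite[Proposition 1.5.7 and Proposition 1.6.4]{ownthesis}.
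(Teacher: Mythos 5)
Your proposal is correct and follows essentially the same route as the paper: translate admissible handle decompositions into Morse functions in $\calH_{0,d-3}(W)$, use \pref{Theorem}{thm:space-of-gmf-connected} to connect them by a path of generalized Morse functions whose birth-death points have index $\le d-4$, and then run the Cerf-theoretic dictionary from \pref{Proposition}{prop:handle-decomposition-difference} to read off the moves, with the index bound on birth-death singularities yielding exactly move 5'. The technical genericity step you flag is precisely what the paper delegates to \cite[Theorem 3.4]{connectedcerf} and \cite[Propositions 1.5.7 and 1.6.4]{ownthesis}.
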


\subsection{The surgery datum category}\label{sec:surgery-cat}

We recall the following method to construct a category. For details see \cite[pp. 48]{maclane_categories}.
\begin{definition}
	A \emph{graph} is a tupel $(O,A,\partial_0,\partial_1)$, where $O$ and $A$ are sets called \emph{object set} and \emph{arrow set} and $\partial_0,\partial_1$ are maps $A\rightrightarrows O$. We say that two arrows $f,g\in A$ are \emph{composable} if $\partial_0g = \partial_1f$.
\end{definition}

\begin{definition}
	Let $G=(O,A,\partial_0,\partial_1)$ be a graph. We define the category $\calC(G)$ to have elements of $O$ as objects and morphisms of $\calC(G)$ are (possibly empty) strings of composable morphisms of $A$. We call $\calC(G)$ the \emph{free category generated by $G$}.
\end{definition}

\begin{prop}[{\cite[p. 51, Proposition 1]{maclane_categories}}]\label{prop:quotient-category}
	Let $\calC$ be a small category and let $R$ be a binary relation, \ie a map that assigns to each pair $(a,b)$ of objects a subset of $\mor{\calC}{a,b}^2$. Then, there exists a category $\calC/R$ with object set $\obj\calC$ and a functor $Q\colon\calC\to\calC/R$ (which is the identity on objects) such that
	\begin{enumerate}
		\item If $(f,f')\in R(a,b)$ then $Qf=Qf'$.
		\item If $H\colon \calC\to \calD$ is a functor such that $(f,f')\in R(a,b)$ implies $Hf=Hf'$, then there exists a unique functor $H'\colon \calC/R\to \calD$ such that $H'\circ Q = H$.
	\end{enumerate}
\end{prop}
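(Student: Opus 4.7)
The plan is to construct $\calC/R$ by passing to the quotient of the hom-sets by the smallest \emph{congruence} on $\calC$ containing $R$, and then verify the universal property.

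First, I would define a \emph{congruence} on $\calC$ to be a family of equivalence relations $\sim_{a,b}$ on each $\mor{\calC}{a,b}$ which is compatible with composition: whenever $f\sim_{a,b} f'$ and $g\colon b\to c$, $h\colon a'\to a$ are morphisms in $\calC$, we require $g\circ f\sim_{a',c} g\circ f'$ and $f\circ h\sim_{a',b} f'\circ h$. Since an arbitrary intersection of congruences is again a congruence, and the total relation is a congruence containing $R$, there exists a smallest congruence $\bar R$ containing $R$. Set $\obj(\calC/R)\coloneqq\obj\calC$ and $\mor{\calC/R}{a,b}\coloneqq\mor{\calC}{a,b}/\bar R$, with composition defined on representatives; the compatibility of $\bar R$ with composition is exactly what is needed for this to be well-defined, and associativity and identities descend from $\calC$. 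The functor $Q$ is the identity on objects and sends each morphism to its equivalence class.

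For the universal property, suppose $H\colon\calC\to\calD$ is a functor such that $(f,f')\in R(a,b)$ implies $Hf=Hf'$. Consider the relation $\sim^H$ on $\calC$ defined by $f\sim^H f'$ iff $Hf=Hf'$. Because $H$ is a functor, $\sim^H$ is a congruence, and by hypothesis it contains $R$; by minimality, $\bar R\subseteq\sim^H$. Hence $Hf=Hf'$ whenever $f\bar R f'$, so setting $H'[f]\coloneqq Hf$ gives a well-defined map on each hom-set. Functoriality of $H'$ and the identity $H'\circ Q=H$ are then immediate from the corresponding properties of $H$. Uniqueness of $H'$ follows because $Q$ is the identity on objects and surjective on hom-sets.

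The only real subtlety to address is to ensure that the smallest congruence $\bar R$ genuinely exists; this is where smallness of $\calC$ is used, so that the collection of congruences containing $R$ is a set and the intersection makes sense. Concretely one can also build $\bar R$ from the bottom up: close $R$ first under pre- and post-composition by arbitrary morphisms of $\calC$, then under the equivalence relation generated, and verify that this process stabilises to give a congruence. I expect the main bookkeeping step to be checking that composition of equivalence classes is well-defined, which boils down to the two-sided invariance of $\bar R$ under composition; once this is in hand, everything else is essentially formal.
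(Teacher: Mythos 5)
Your proof is correct and follows essentially the same route as the cited source (Mac Lane, p.~51): generate the smallest congruence containing $R$, quotient the hom-sets, and deduce the universal property from minimality of the congruence. The paper itself gives no proof beyond the citation, and your handling of the two standard subtleties (existence of the generated congruence via smallness, and well-definedness of composition on classes) is exactly what is needed.
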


\noindent Let $\bord_d$ denote the category with objects $(d-1)$-manifolds and morphisms given by diffeomorphism classes of cobordisms $(W,\psi_0,\psi_1)$. The main goal of this chapter is to give a presentation of $\bord_d$, i.e. a graph $G$, a relation $R$ and an equivalence of categories $\calC(G)/R\congarrow\bord_d$. Let us first construct the graph $G$. Objects in $O$ are the objects of $\bord_d$ and arrows will be given by diffeomorphisms and elementary cobordisms:

	\begin{enumerate}
		\item For a diffeomorphism $f\colon M_0\to M_1$ we get an arrow $I_f\in A$ from $M_0$ to $M_1$.
		\item For a surgery datum $\varphi$ in $M$ we get an arrow $S_\varphi\in A$ from $M$ to $M_\varphi$.
	\end{enumerate}

\noindent Next, we need to construct the relation $R$ on $\calC(G)$. Recall that for a diffeomorphism $f\colon M\to M'$ and a surgery datum $\varphi$ in $M$ there exists a canonical induced diffeomorphism $f_\varphi\colon M_\varphi\to M'_{f\circ\varphi}$. Also, if $\varphi$ and $\varphi'$ are two surgery embeddings into $M$ with disjoint images, there are obvious induced surgery data $\varphi'_\varphi$ and $\varphi_{\varphi'}$ on $M_\varphi$ and $(M_{\varphi})_{\varphi'_\varphi} =  (M_{\varphi'})_{\varphi_{\varphi'}}$. We define $R$ to be the relation on morphism sets of $\calC(G)$ generated by the following:
	\begin{enumerate}
		\item $I_{\id} = \id$.
		\item If $f\colon M_0\congarrow M_1$ and $g\colon M_1\congarrow M_2$ are diffeomorphisms, then $I_g \circ I_f = I_{g\circ f}$.
		\item Let $f\colon M_0\congarrow M_1$ and let $\varphi$ be a surgery embedding into $M_0$. Then $S_{f\circ\varphi}\circ I_f = I_{f_\varphi} \circ S_{\varphi}$.
		\item If $f,f'\colon M\congarrow M'$ are isotopic, then $I_f=I_{f'}$.
		\item If $A\in O(k)\times O(d-k)$, then $S_\varphi = S_{\varphi\circ A}$.
		\item If $\varphi,\varphi'$ are two surgery embeddings into $M$ with disjoint images, then $S_{\varphi_{\varphi'}}\circ S_{\varphi'} = S_{\varphi'_\varphi}\circ S_\varphi$.
		\item Let $\varphi$ be a $k$-surgery datum in $M$ and $\varphi'$ a $(k+1)$-surgery datum in $M_\varphi$ such that the belt sphere of $\varphi$ and the attaching sphere of $\varphi'$ intersect transversely in a single point. Then $S_{\varphi'}\circ S_\varphi = I_{\id\ \#\ \eta_k}$, where $\eta_k$ is the diffeomorphism described \pref{Section}{sec:handle-decomposition}, below \pref{Remark}{rem:diffeomorphism-and-surgery}.
	\end{enumerate}
	
\begin{remark}
	For isotopic surgery embeddings $\varphi$ and $\varphi'$ we get a diffeotopy $H$ of $M$ such that $H_0=id$ and $H_1\circ\varphi = \varphi'$ by the isotopy extension theorem. Then 
	\[S_{\varphi'} = S_{H_1\circ\varphi} \circ I_{H_0} = S_{H_1\circ\varphi} \circ I_{H_1}  = I_{(H_1)_\varphi}\circ S_{\varphi}.\]	
\end{remark}

\begin{definition}\label{def:surgery-datum-category}
	We define the \emph{surgery datum category} $\calX_d$ to be $\calC(G)/R$ and $Q\colon \calC(G)\to\calX_d$ shall denote the projection functor.
\end{definition}

\subsection{A presentation of the cobordism category}\label{sec:presentation}

In this section we prove that the surgery datum gives a presentation of the category $\bord_d$. This is the main result of this chapter.

\begin{thm}\label{thm:presentation-of-bordism-category}
	Let $P\colon \calC(G)\to\bord_d$ denote the functor which is the identity on objects and is given on morphisms by
	\begin{enumerate}
		\item For $f\colon M_0\to M_1$, $I_f$ is mapped to $(M_0\times [0,1],\id,f)\cong (M_1\times [0,1],f^{-1},\id)$
		\item For a surgery datum $\varphi$ in $M$, $S_\varphi$ is mapped to $(\tr(\varphi),\id,\id)$.
	\end{enumerate}
	Then $P$ descends to a functor $\calP\colon\calX_d\to\bord_d$ which is an equivalence of categories. 
\end{thm}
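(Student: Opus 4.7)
The plan is to establish the theorem in two steps: verify that $P$ respects the seven families of relations defining $R$, so that it descends to a well-defined functor $\calP\colon\calX_d\to\bord_d$; then show that $\calP$ is an equivalence of categories.

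For the descent I would go through the relations one at a time. Relations (1), (2) and (4) are immediate from elementary properties of mapping cylinders: the mapping cylinder of $\id$ is the identity cobordism, the concatenation of two cylinders is the cylinder of the composition, and isotopic diffeomorphisms yield cobordisms diffeomorphic rel boundary. Relation (3) is precisely the content of \pref{Remark}{rem:diffeomorphism-and-surgery}: the canonical diffeomorphism $F\colon\tr\varphi\to\tr(f\circ\varphi)$, restricting to $f$ on the bottom and $f_\varphi$ on the top, identifies $P(S_{f\circ\varphi}\circ I_f)$ with $P(I_{f_\varphi}\circ S_\varphi)$. Relation (5) is built into the $\ort(k)\times\ort(d-k)$-invariance of the standard trace from \pref{Construction}{con:standard-trace}. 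Relation (6) follows from the obvious diffeomorphism between the two orders of attaching disjoint handles. Finally, relation (7) is the content of the diffeomorphism $H_k\colon S^{d-1}\times[0,2]\congarrow\tr(\varphi)\cup\tr(\varphi')$ constructed in \pref{Section}{sec:handle-decomposition}, whose restriction to the top boundary is $\eta_k$ by construction.

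The functor $\calP$ is the identity on objects, so essential surjectivity is trivial. For fullness, any cobordism $(W,\psi_0,\psi_1)$ admits a handle decomposition in the sense of \pref{Definition}{def:handle-decomposition} by standard Morse theory, and reading off the identifying diffeomorphisms and surgery data from such a decomposition yields a word in $I_f$'s and $S_\varphi$'s whose image under $P$ is the given cobordism class. The main obstacle is faithfulness. Given two words $w,w'\in\calC(G)$ with $P(w)=P(w')$, each prescribes a handle decomposition of the common cobordism $W$. By \pref{Proposition}{prop:handle-decomposition-difference} (which is where the standing assumption $d\ge7$ enters), these two decompositions are connected by a finite sequence of the five elementary moves, and I would verify that each move is precisely witnessed by one of the defining relations of $R$: moves (1) and (2) are accounted for by relations (3) and (4) (the second one via the \emph{Remark} following the definition of $R$); move (3) by relation (5); move (4) by relation (6); and move (5) by relation (7), the match here being guaranteed by the fact that the cancellation diffeomorphism $\eta_k$ fixed once and for all in \pref{Section}{sec:handle-decomposition} is the same as the one appearing in relation (7). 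Thus $Q(w)=Q(w')$ in $\calX_d$, which completes the proof.
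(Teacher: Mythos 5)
Your proposal is correct and follows essentially the same route as the paper: verify the seven relations one by one (citing the invariance of the standard trace, the canonical diffeomorphism of traces, and the fixed cancellation diffeomorphism $\eta_k$), get essential surjectivity for free, deduce fullness from the existence of handle decompositions, and deduce faithfulness from \pref{Proposition}{prop:handle-decomposition-difference}. Your explicit matching of the five elementary moves to the defining relations is a slightly more detailed account of the faithfulness step than the paper gives, but the argument is the same.
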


\begin{proof}
	First we check well-definedness. By \pref{Proposition}{prop:quotient-category} it suffices to show that $P$ respects the relations of $\calX_d$.	
	\begin{enumerate}
		\item $(M_0\times [0,1],\id,\id)$ is the identity.
		\item$\begin{aligned}[t]
				(M_1\times [0,1],\id,f)&\circ (M_0\times [0,1],\id,g) \coloneqq (M_0\times [0,1]\cup_g M_1\times[0,1],\id,f)\\
					&\congarrow (M_0\times [0,2],\id,f\circ g)
			\end{aligned}$\\
			and the diffeomorphism is given by the identity on ${M_0\times[0,1]}$ and by the map $(p,t)\mapsto (g^{-1}(p),t+1)$ for $(p,t)\in M_1\times[0,1]$.
		\item Let $\varphi$ be a surgery embedding into $M_0$ and let $f\colon M_0\congarrow M_1$ be a diffeomorphism. 
			\begin{align*}
				P(I_{f_\varphi}\circ S_\varphi) & = (\tr(\varphi)\cup (M_0)_\varphi\times[0,1],\id, f_\varphi)\\
				P(S_{f\circ\varphi}\circ I_f) &=  ([0,1]\times M_0\cup_{f}\tr(f\circ\varphi),\id,\id)
			\end{align*}
			We will show that both of these are diffeomorphic to $X\coloneqq(M_0\times[0,1]\cup\tr\varphi\cup_{f_\varphi}(M_1)_{f\circ \varphi}\times[0,1],\id,\id)$. The diffeomorphism $X\congarrow P(I_{f_\varphi}\circ S_\varphi)$ is given by shrinking $M_0\times[0,1]\cup\tr\varphi$ to $\tr\varphi$ and by $f_\varphi\times\id$ on $(M_0)_{\varphi}\times[0,1]$. Recall that there is a canonical diffeomorphism $F\colon\tr\varphi\congarrow\tr(f\circ\varphi)$. The diffeomorphism $X\congarrow P(S_{f\circ\varphi}\circ I_f)$ is given by the identity on $M_0\times [0,1]$, F on $\tr(\varphi)$ and by shrinking the collar of $(M_1)_{f\circ\varphi}$.
		\item Let $f_t\colon M_0\congarrow M_1$ be a diffeotopy. Then we get a diffeomorphism $F\colon([0,1]\times M_0,\id,f_0)\congarrow ([0,1]\times M_0,\id,f_1)$ given by $F(t,x) = f_t^{-1}\circ f_0(x)$.
		\item For every $A\in \ort(k)\times \ort(d-k)$, $\varphi\circ A$ is just a reparametrization of $\varphi$ and hence this does not change $\tr(\varphi)$ since the standard model was chosen to be $\ort(k)\times \ort(d-k)$-invariant (cf. \pref{Construction}{con:standard-trace}).
		\item Let $\varphi,\varphi'$ be surgery embeddings into $M$ with disjoint images and let $U, U'$ be disjoint neighborhoods of $\im\varphi, \im\varphi'$ in $M$. Let $F\colon [0,2]\times M\congarrow [0,2]\times M$ be a diffeomorphism such that 
			\begin{enumerate}
				\item $F|_{[0,\frac\epsilon2)\times M\cup (2-\frac\epsilon2,2]\times M} =\id$
				\item $F(t,x) = (t+1,x)$ for $1-\epsilon_1>t>\epsilon_1$ and $x\in U$
				\item $F(t,x) = (t-1,x)$ for $2-\epsilon_{1}>t>1+\epsilon_1$ and $x\in U'$
			\end{enumerate} 
		Then, $F$ induces a diffeomorphism $\overline F\colon\tr(\varphi)\cup\tr(\varphi'_\varphi)\cong\tr(\varphi')\cup\tr(\varphi_{\varphi'})$ which is the identity on a collar of the boundary.
		\item This is precisely the situation discussed below \pref{Remark}{rem:diffeomorphism-and-surgery}.
	\end{enumerate}
	Therefore there is an essentially surjective functor $\calP\colon \calX_d\to\bord_d$. Every cobordism admits a handle decomposition and hence this functor is full. It is faithful by \pref{Proposition}{prop:handle-decomposition-difference}: Any two preimages of a cobordism $W$ under $\calP$ only differ by a finite sequence of the seven relations of $\calX_d$.	
\end{proof}

	\begin{definition}
		Let $a,b\in\{-1,0,1,\dots\}$. We define:
		\begin{enumerate}
			\item We define $\bord_d^{a,b}\subset \bord_d$ to be the wide\footnote{A subcategory is called wide if it contains all objects.} subcategory defined by the following: $\mor{\bord^{a.b}_d}{M_0,M_1}$ contains those morphisms $(W,\psi_0,\psi_1)$ where $\psi_0^{-1}\colon M_0\embeds W$ is $a$-connected and $\psi_1^{-1}\colon M_1{\embeds} W$ is $b$-connected. Here $(-1)$-connected shall be the empty condition.
			\item $G^{a,b}$ to be the graph with the same object set as $G$ and morphisms as follows: For $f\colon M_0\congarrow M_1$ we have $I_f\in A$ connecting $M_0$ and $M_1$ and for every surgery embedding $\varphi\colon S^{k-1}\times D^{d-k}\embeds M$ with $k\in[a+1,d-b-1]$ we have $S_\varphi\in A$ connecting $M$ and $M_\varphi$. Analogously to \pref{Definition}{def:surgery-datum-category}, we define $\calX_d^{a,b}\coloneqq \calC(G^{a,b})/R$.
		\end{enumerate}
	\end{definition}
	
	\noindent Note that $\bord_d^{a,b}$ is a category by the Blakers-Massey excision theorem \cite[Theorem 6.4.1]{tomdieck}.

	\begin{thm}\label{thm:presentation-of-bordism-category-ab}
	 	For $d\ge 7$, the induced functor $\calP^{-1,2}\colon\calX_d^{-1,2} \to \bord_d^{-1,2}$ is an equivalence of categories.
	\end{thm}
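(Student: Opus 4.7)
The plan is to imitate the proof of \pref{Theorem}{thm:presentation-of-bordism-category} essentially verbatim, substituting admissible handle decompositions for general ones and using \pref{Proposition}{prop:handle-decomposition-difference-ab} in place of \pref{Proposition}{prop:handle-decomposition-difference}. The hypothesis $d\ge 7$ is needed precisely so that \pref{Proposition}{prop:handle-decomposition-difference-ab} is available.

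First I would verify that the functor $P$ from \pref{Theorem}{thm:presentation-of-bordism-category} restricts to a functor $\calC(G^{-1,2})\to\bord_d^{-1,2}$ and hence descends to $\calP^{-1,2}$. A cylinder $P(I_f)=(M_0\times[0,1],\id,f)$ automatically lies in $\bord_d^{-1,2}$ since both boundary inclusions are deformation retracts. For a surgery datum $\varphi\colon S^{k-1}\times D^{d-k}\embeds M$ with $k\le d-3$ (the range allowed in $G^{-1,2}$), the trace $\tr(\varphi)$ is obtained from $M_\varphi$ by attaching a $(d-k)$-handle, so the outgoing inclusion $M_\varphi\embeds\tr(\varphi)$ is $(d-k-1)$-connected and hence $2$-connected. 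Composition preserves this by Blakers--Massey, as already noted. The relations in $R$ are preserved by the same calculation as in the proof of \pref{Theorem}{thm:presentation-of-bordism-category}, restricted to the generators in $G^{-1,2}$.

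Essential surjectivity is automatic as both categories have the same objects. For fullness, any morphism in $\bord_d^{-1,2}$ is represented by an admissible cobordism; by \pref{Remark}{rem:wall} it admits an admissible handle decomposition, exhibiting it as a composition of cylinders and traces with indices $\le d-3$, which is precisely the image of a word in $G^{-1,2}$.

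The main (and only) nontrivial step will be faithfulness. Two words in $G^{-1,2}$ with the same image under $\calP^{-1,2}$ correspond to two admissible handle decompositions of the same cobordism, and by \pref{Proposition}{prop:handle-decomposition-difference-ab} these are related by a finite sequence of the five moves (1)--(4) and (5'). Each such move is exactly one of the relations generating $R$. The only subtlety is the index bookkeeping: I would need to check that every such move stays inside $G^{-1,2}$. Moves (1)--(4) preserve the multiset of indices appearing. Move (5') has the hypothesis $k\le d-4$, which ensures that both the $k$-surgery and the $(k+1)$-surgery have admissible index $\le d-3$, so the relation $S_{\varphi'}\circ S_\varphi=I_{\id\,\#\,\eta_k}$ lives in the restricted quotient $\calX_d^{-1,2}$. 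This is precisely why the constraint $k\le d-4$ in \pref{Proposition}{prop:handle-decomposition-difference-ab} (rather than the weaker $k\le d-3$) is indispensable; without it, cancellation of a $(d-3)$- and $(d-2)$-handle would need a relation involving a non-admissible surgery datum and would not be available inside $\calX_d^{-1,2}$.
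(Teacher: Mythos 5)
Your proposal is correct and follows essentially the same route as the paper: restrict the functor of Theorem \ref{thm:presentation-of-bordism-category}, get fullness from the existence of admissible handle decompositions (the paper cites Theorem \ref{thm:space-of-gmf-connected}, you cite Remark \ref{rem:wall}; either works), and get faithfulness from Proposition \ref{prop:handle-decomposition-difference-ab}. Your extra checks — that traces of surgeries with $k\le d-3$ indeed land in $\bord_d^{-1,2}$ because the outgoing inclusion is $(d-k-1)$-connected, and that the index bound $k\le d-4$ in move (5') keeps the cancellation relation inside $\calX_d^{-1,2}$ — are details the paper leaves implicit, and you have them right.
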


	\begin{proof}
		The proof goes along the same lines as the proof of \pref{Theorem}{thm:presentation-of-bordism-category}. For fullness we note that if the inclusions $\psi_1^{-1}\colon M_1\embeds W$ is $2$-connected respectively, there exists a Morse function with all indices $\le d-3$ by \pref{Theorem}{thm:space-of-gmf-connected}. Faithfulness follows from \pref{Proposition}{prop:handle-decomposition-difference-ab}.
	\end{proof}
	
\subsection{Definition of the surgery map}\label{sec:S-is-well-defined}
	Let $\hotop$ denote the homotopy category of spaces, \ie the category with spaces as objects and homotopy classes of maps as morphisms.	
		\begin{definition}\label{def:surgeryequivalence}
			We define a functor 
			\[\overline\calS\colon\calC(G^{-1,2})\too \hotop\]
			by the following:
			\begin{enumerate}
				\item $\overline\calS(M) = \calR^+(M)$.
				\item For a diffeomorphism $f\colon M_0\congarrow M_1$ the morphism $I_f$ is mapped to $[g\mapsto f_*g]$, where $f_*\coloneqq(f^{-1})^*$.
				\item For $\varphi\colon S^{k-1}\times D^{d-k}\embeds M$ with $k\le d-3$, 
				\[S_\varphi\mapsto [\calR^+(M)\dashrightarrow \calR^+(M,\varphi) \congarrow \calR^+(M_\varphi,\varphi^{\op})\embeds \calR^+(M_\varphi)],\]
				where the first map in this chain is the homotopy inverse to the inclusion (cf. \pref{Theorem}{thm:chernysh}) and the second one works as follows: For a metric $\tilde g$ on $M\setminus\im\varphi$, the metric $\tilde g \cup \varphi_*(g^{k-1}_\circ + g^{d-k}_{\tor})$ is mapped to $\tilde g \cup (\varphi^{\op})_*(g^k_{\tor} + g^{d-k-1}_\circ)$.
			\end{enumerate}
			We will abbreviate $\overline\calS_f\coloneqq\overline\calS(I_f)$ and $\overline\calS_\varphi\coloneqq \overline\calS(S_\varphi)$.
		\end{definition}
		
		\begin{remark}
			We have $\overline\calS(\mor{\calC(G^{2,2})}{M_0,M_1})\subset \hiso(\calR^+(M_0),\calR^+(M_1))$, i.e. $\overline\calS$ maps morphisms in ${\calC(G^{2,2})}$ to (the homotopy classes of) homotopy equivalences. This follows from the Parametrized Surgery Theorem (cf. \pref{Theorem}{thm:chernysh}).
		\end{remark}

\begin{lem}\label{lem:S-is-well-defined}
	$\overline\calS$ induces a well-defined functor $\calX_d^{-1,2}\too\hotop$.
\end{lem}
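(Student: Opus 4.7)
By \pref{Proposition}{prop:quotient-category}, it suffices to check that $\overline\calS$ respects each of the seven families of relations generating $R$. I discuss them in turn.

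Relations (1), (2) and (4) are immediate from the fact that pullback along diffeomorphisms is (strictly) functorial and that isotopic diffeomorphisms induce homotopic pullbacks. Relation (5) is immediate from \pref{Construction}{con:standard-trace} and \pref{Definition}{def:general-surgery-theorem}: the standard form $g_\circ+g_{\tor}$ is $\ort(k)\times\ort(d-k)$-invariant, so $\calR^+(M,\varphi)=\calR^+(M,\varphi\circ A)$ holds as \emph{subspaces} of $\calR^+(M)$, and the cut-and-paste formula is literally unchanged.

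For relation (3), the point is that $f_*\colon\calR^+(M_0)\to\calR^+(M_1)$ restricts to a homeomorphism $\calR^+(M_0,\varphi)\to\calR^+(M_1,f\circ\varphi)$, because $f$ takes standard pieces to standard pieces (by definition of $f\circ\varphi$). Since $f_*$ is a homotopy equivalence, a choice of homotopy inverses $r,r'$ to the two inclusions $\calR^+(M_i,\bullet)\hookrightarrow\calR^+(M_i)$ satisfies $f_*\circ r\simeq r'\circ f_*$. The second half of the surgery map is purely a cut-and-paste in which the complement transforms by $f_\varphi$ (since $f_\varphi=f$ there) and the glued-in standard piece is the same on both sides; so the square commutes on the nose after the zig-zag. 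Hence the two compositions agree in $\hotop$.

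Relation (6) is analogous. For disjoint $\varphi,\varphi'$ the common refinement $\calR^+(M,\varphi\amalg\varphi')\hookrightarrow\calR^+(M,\varphi),\calR^+(M,\varphi')$ are weak equivalences by \pref{Theorem}{thm:chernysh}, so both compositions factor (up to homotopy) through this space, on which the two cut-and-paste operations act on disjoint regions and therefore commute strictly.

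The main obstacle, and the only relation that really uses the geometric setup of \pref{Section}{sec:handle-decomposition}, is relation (7). Here the calculation is designed precisely so that the two surgery traces $\tr(\varphi)\cup\tr(\varphi')$ form a cylinder $S^{d-1}\times[0,2]$ via the fixed diffeomorphism $H_k$, whose restriction to the outgoing boundary is exactly $\eta_k$. Using \cite[Theorem 5.4.3]{wall_difftop} to localise everything in a single disc $D\subset M$ (so that the rest of $M$ is unaffected), one can pass to the subspace of metrics standard on $D$ — which is equivalent to $\calR^+(M)$ by \pref{Theorem}{thm:chernysh} — and on this subspace the composition $\overline\calS_{\varphi'}\circ\overline\calS_\varphi$ is literally the operation that removes the standard piece on $D$ and reinserts a standard piece on $D_{(\varphi,\varphi')}$ via the identifications dictated by the solid torus decompositions $a^k$ and $a^{k+1}$. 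These are precisely the identifications used to construct $\eta_k$, so under $\eta_k$ the two outputs coincide as metrics, and outside $D$ nothing changes. Thus $\overline\calS_{\varphi'}\circ\overline\calS_\varphi\simeq(\id\#\eta_k)_*=\overline\calS_{\id\#\eta_k}$ on the subspace, and hence on all of $\calR^+(M)$ after inverting the weak equivalence. The bookkeeping here — matching the two independent cut-and-paste prescriptions with the fixed diffeomorphism $H_k$ — is the technically delicate step, but it is forced by the standardised choices made in \pref{Construction}{con:standard-trace} and in the paragraph preceding \pref{Proposition}{prop:handle-decomposition-difference}.
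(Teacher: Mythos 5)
Your treatment of relations (1)--(6) matches the paper's: the same invariance of $g_\circ+g_{\tor}$ for (5), the same homotopy-commutative square for (3), and the same common refinement $\calR^+(M,\varphi\amalg\varphi')$ for (6). The gap is in relation (7), which is indeed the only hard case, and your argument for it asserts the crucial fact instead of proving it.

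Concretely, after localising to a disc $D$ and restricting to metrics standard on $D$, you claim that $\overline\calS_{\varphi'}\circ\overline\calS_\varphi$ is ``literally'' a cut-and-paste whose output, pulled back along $\eta_k$, \emph{coincides} with the input metric. Neither half of this is true. First, the composition is not a literal cut-and-paste: after the first surgery the metric is standard near $\varphi^{\op}$ but not near $\varphi'$, so $\overline\calS_{\varphi'}$ begins with a non-canonical homotopy inverse that genuinely moves the metric within its path component; the paper has to choose representatives $g$, $g_\varphi$, $\tilde g$, $\tilde g_{\varphi'}$ in the correct components and assemble a diagram of weak equivalences to replace the dashed arrows by honest maps. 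Second, and more seriously, even the honest cut-and-paste does not return the original metric: starting from $g_{\tor}\cup g_{\tor}$ on $S^{d-1}$, the two surgeries produce (up to the identifications $a^k$, $a^{k+1}$) the \emph{mixed torpedo metric} $g^{k+1}_{\mtor}=(g_{\tor}+g_\circ)\cup(g_\circ+g_{\tor})$ in a new solid-torus decomposition, which is a different metric from the one you started with. That $(a^{k+1})^*g^{k+1}_{\mtor}$ is \emph{isotopic} to $g_\circ$ (and hence that $[f^*\tilde g_{\varphi'}]=[g_{\tor}]$ in $\pi_0(\calR^+(D)_{g_\circ})$) is exactly the content of the paper's \pref{Lemma}{lem:surgery-map-round-metric}, which rests on Walsh's computation \cite[Lemma 1.9]{walsh_parametrized1} that mixed torpedo metrics on the sphere are isotopic to the round metric. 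Without that geometric input your argument does not close: the standardised choices of $H_k$ and $\eta_k$ control the \emph{diffeomorphism} bookkeeping, but they cannot by themselves produce the required isotopy of \emph{metrics}.
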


\begin{proof}
	For $d\le2$ the statement and the proof of this theorem is trivial since $\morph{\calX^{-1,2}_d}$ is generated by diffeomorphisms and it suffices to note that isotopic diffeomorphisms induce homotopic maps. Therefore we may assume $d\ge3$ throughout this proof. Throughout this proof we will use dashed arrows for maps that contain inverses of weak homotopy equivalences (cf. \pref{Remark}{rem:weak-equivalence}).
	
	We need to show that the relations $R$ from \pref{Definition}{def:surgery-datum-category} do not change the homotopy class of $\overline\calS(\alpha)$ for $\alpha\in\mor{\calX_d^{-1,2}}{M_0,M_1}$. This is obvious for relations $1,2$ and $4$. For relation $5$ this is easy as well, because $g_\circ + g_{\tor}$ is $O(k)\times O(d-k)$-invariant. Also, $S_{f\circ\varphi}\circ I_f$ and $I_{f_\varphi}\circ S_\varphi$ give homotopic maps because of the following homotopy-commutative diagram.
	\begin{center}
	\begin{tikzpicture}
		\node (0) at (0,1.2) {$\calR^+(M_0)$};
		\node (1) at (2.6,1.2) {$\calR^+(M_0,\varphi)$};
		\node (2) at (6.3,1.2) {$\calR^+((M_0)_\varphi,\varphi^{\op})$};
		\node (3) at (10,1.2) {$\calR^+((M_0)_\varphi)$};
		\node (4) at (0,0) {$\calR^+(M_1)$};
		\node (5) at (2.6,0) {$\calR^+(M_1,f\circ\varphi)$};
		\node (6) at (6.3,0) {$\calR^+((M_1)_{f\circ\varphi},(f\circ\varphi)^{\op})$};
		\node (7) at (10,0) {$\calR^+((M_1)_{f\circ\varphi})$};

		\draw[left hook->] (1) to (0);
		\draw[->,dashed,bend left=30] (0) to (1);
		\draw[->] (1) to (2);
		\draw[right hook->] (2) to (3);
		\draw[->] (1) to node[left]{$f_*$} (5);
		\draw[->] (3) to node[left]{$(f_{\varphi})_*$} (7);
		\draw[->] (0) to node[left]{$f_*$} (4);
		\draw[left hook->] (5) to (4);
		\draw[->] (2) to node[left]{$(f_{\varphi})_*$} (6);
		\draw[->,dashed,bend left=30] (4) to (5);
		\draw[->] (5) to (6);
		\draw[right hook->] (6) to (7);
	\end{tikzpicture}
	\end{center}
	
	\noindent For relation $6$ let $\varphi,\varphi'$ be two surgery embeddings into $M$ with disjoint images. Then there are inclusions $\calR^+(M,\varphi)\hookleftarrow\calR^+(M,\varphi\amalg\varphi')\embeds \calR^+(M,\varphi')$ and performing both surgeries at the same time is the same as performing them one after another. 

	The most difficult part of this proof is to show that handle cancellation does not alter the homotopy class of $\overline\calS(\alpha)$. If $d=3$ the only surgery data in $\morph{\calX_d^{-1,2}}$ are of the form $S^{-1}\times D^{3}\embeds M$. Hence there cannot be cancelling surgeries and we may assume that $d\ge4$ from now on. The proof now consists of two steps: We first reduce to the statement that cancelling surgeries do not change the path component of the round metric in $\calR^+(S^{d-1})$ which afterwards is proven by an elementary computation using \cite[Lemma 1.9]{walsh_parametrized1}.
	
	 Let $\varphi$, $\varphi'$ be surgery data in $M$ as in relation $7$ and let $f\coloneqq \id_M\ \#\ \eta_k$ where $\eta_k\colon S^{d-1}\congarrow (S^{d-1}_\varphi)_{\varphi'}$ is the fixed diffeomorphism from \pref{Section}{sec:handle-decomposition}. Note that in this case we have $k\le d-4$ and $d\ge4$. There exists an embedding of a disk $D^{d-1}\cong D\subset M$ such that $\im\varphi\subset D$ and $\im\varphi'\subset D_\varphi$. It suffices to show that the composition 
	\begin{center}
	\begin{tikzpicture}
		\node (0) at (0,1) {$\calR^+(M,D;g_{\tor})$};
		\node (1) at (2.75,1) {$\calR^+(M)$};
		\node (2) at (6.75,1) {$\calR^+\bigl((M_{\varphi})_{\varphi'}\bigr)$};
		\node (3) at (9.5,1) {$\calR^+(M)$};
		
		\draw[right hook->] (0) to node[above]{$\iota$} (1);
		\draw[->, dashed] (1) to node[above]{${\overline\calS_{\varphi'}\circ\overline\calS_\varphi}$}(2);
		\draw[->] (2) to node[above]{$f^*$} (3);
	\end{tikzpicture}
	\end{center}
	is homotopic to the inclusion $\iota$: By the \pref{Theorem}{thm:chernysh}, the inclusion map $\iota$ is a weak homotopy equivalence since $d\ge4$ and hence $\overline\calS_{\varphi'}\circ\overline\calS_{\varphi}$ is homotopic to $f_*$.
	
	Let $g\in\calR^+(D,\varphi)_{g_\circ}$ be a metric in the component of $g_{\tor}\in\calR^+(D)_{g_\circ}$ which exists by \pref{Theorem}{thm:chernysh}. Consider the following diagram:
	\begin{center}
	\begin{tikzpicture}
		\node (0) at (0,1.2) {$\calR^+(M\setminus D)_{g_\circ}$};
		\node (1) at (5,1.2) {$\calR^+(M,D;g)$};
		\node (2) at (3,0) {$\calR^+(M,D;g_{\tor})$};
		\node (3) at (8,1.2) {$\calR^+(M,\varphi)$};
		\node (4) at (8,0) {$\calR^+(M)$};
		
		\draw[->] (0) to node[above,sloped]{$\cong$} (1);
		\draw[->] (0) to node[above,sloped]{$\cong$} (2);
		\draw[->] (1) to node[above,sloped]{} (3);
		\draw[right hook->] (2) to node[above,sloped]{$\simeq$} (4);
		\draw[right hook->] (3) to node[right]{$\simeq$} (4);
	\end{tikzpicture}
	\end{center}
	The composition of the top maps is given by gluing in $g$ and the composition of the lower maps is given by gluing in $g_{\tor}$. These two metrics are homotopic relative to the boundary and hence this diagram commutes up to homotopy. The bottom map and the right-hand vertical map are weak equivalences by \pref{Theorem}{thm:chernysh} because $d\ge4$ and $k\le d-4$. Hence, the inclusion map $\calR^+(M,D;g)\embeds \calR^+(M,\varphi)$ is a weak equivalence as well. Let $g_\varphi$ be the metric obtained from $g$ by cutting out $\varphi_*(g_\circ^{k-1} + g^{d-k}_{\tor})$ and gluing in $\varphi^{\op}_*(g_{\tor}^{k} + g_\circ^{d-k-1})$. The following diagram where the horizontal maps are given by replacing $g$ with $g_\varphi$ commutes on the nose with the non-dashed arrows and up to homotopy with the dashed arrow:
	\begin{center}
	\begin{tikzpicture}
		\node (0) at (-2,0) {$\calR^+(M)$};
		\node (1) at (1,0) {$\calR^+(M,\varphi)$};
		\node (2) at (4.5,0) {$\calR^+(M_\varphi,\varphi^\op)$};
		\node (3) at (7.5,0) {$\calR^+(M_\varphi)$};
	
		\node (4) at (1,1.2) {$\calR^+(M, D;g)$};
		\node (5) at (4.5,1.2) {$\calR^+(M_\varphi, D_\varphi; g_\varphi)$};

		\draw[left hook->] (1) to node[above,sloped]{$\simeq$} (0);
		\draw[->, dashed, bend right=30] (0) to node[above,sloped]{} (1);
		\draw[left hook->] (4) to node[above,sloped]{$\simeq$} (0);
		\draw[->] (1) to node[above,sloped]{$\cong$} (2);
		\draw[right hook->] (2) to node[above,sloped]{$\simeq$} (3);
		\draw[right hook->] (4) to node[above,sloped]{$\simeq$} (1);
		\draw[->] (4) to node[above,sloped]{$\cong$} (5);
		\draw[right hook->] (5) to node[above,sloped]{} (2);
		\draw[right hook->] (5) to node[above,sloped]{} (3);
	\end{tikzpicture}
	\end{center}
	It again follows that the right-hand vertical map and the right-hand diagonal map are weak equivalences. Note that the composition of the bottom horizontal maps is precisely the map $\overline\calS_\varphi$. Now let $\tilde g\in\calR^+(D_\varphi,\varphi')_{g_\circ}$ be a metric in the component of $g_\varphi\in\calR^+(D_\varphi)_{g_\circ}$. We get the following diagram
	\begin{center}
	\begin{tikzpicture}
		\node (0) at (0,1.2) {$\calR^+(M_\varphi\setminus D_\varphi)_{g_\circ}$};
		\node (1) at (5,1.2) {$\calR^+(M_\varphi,D_\varphi;\tilde g)$};
		\node (2) at (3,0) {$\calR^+(M_\varphi,D_\varphi;g_\varphi)$};
		\node (3) at (8,1.2) {$\calR^+(M_\varphi,\varphi')$};
		\node (4) at (8,0) {$\calR^+(M_\varphi)$};
		
		\draw[->] (0) to node[above,sloped]{$\cong$} (1);
		\draw[->] (0) to node[above,sloped]{$\cong$} (2);
		\draw[->] (1) to node[above,sloped]{} (3);
		\draw[right hook->] (2) to node[above,sloped]{$\simeq$} (4);
		\draw[right hook->] (3) to node[right]{$\simeq$} (4);
	\end{tikzpicture}
	\end{center} 
	which is homotopy-commutative as $\tilde g$ and $g_\varphi$ are homotopic. The righthand vertical map is a weak equivalence because $d-k-1\ge3$ and we deduce that $\calR^+(M_\varphi,D_\varphi; \tilde g)\embeds \calR^+(M_\varphi, \varphi')$ is a weak equivalence as well. Let $\tilde g_{\varphi'}$ be the metric obtained from $\tilde g$ by cutting out $\varphi'_*(g_\circ^{k} + g^{d-k-1}_{\tor})$ and gluing in $\varphi'^{\op}{}_*(g_{\tor}^{k+1} + g_\circ^{d-k-2})$. We get the analogous homotopy-commutative diagram:

	\begin{center}
	\begin{tikzpicture}
		\node (0) at (-2,0) {$\calR^+(M_\varphi)$};
		\node (1) at (1.5,0) {$\calR^+(M_\varphi,\varphi')$};
		\node (2) at (6,0) {$\calR^+((M_\varphi)_{\varphi'},\varphi'^\op)$};
		\node (3) at (9.5,0) {$\calR^+((M_\varphi)_{\varphi'})$};
	
		\node (4) at (1.5,1.2) {$\calR^+(M_\varphi, D_\varphi;\tilde g)$};
		\node (5) at (6,1.2) {$\calR^+((M_\varphi)_{\varphi'}, (D_\varphi)_{\varphi'}; \tilde g_{\varphi'})$};
		
		\draw[left hook->] (1) to node[above,sloped]{$\simeq$} (0);
		\draw[->, dashed, bend right=30] (0) to node[above,sloped]{} (1);
		\draw[left hook->] (4) to node[above,sloped]{} (0);
		\draw[->] (1) to node[above,sloped]{$\cong$} (2);
		\draw[right hook->] (2) to node[above,sloped]{$\simeq$} (3);
		\draw[right hook->] (4) to node[above,sloped]{$\simeq$} (1);
		\draw[right hook->] (4) to node[above,sloped]{$\cong$} (5);
		\draw[right hook->] (5) to node[above,sloped]{} (2);
		\draw[->] (5) to node[above,sloped]{} (3);
	\end{tikzpicture}
	\end{center}
	This accumulates to the following diagram where all arrows are weak equivalences:
	\begin{center}
	\begin{tikzpicture}
		\node (0) at (0,0) {$\calR^+(M)$};
		\node (1) at (5.25,0) {$\calR^+(M_\varphi)$};
		\node (2) at (11,0) {$\calR^+((M_\varphi)_{\varphi'})$};
	
		\node (3) at (0,1.2) {$\calR^+(M,D;g_{\tor})$};
		\node (4) at (3.5,1.2) {$\calR^+(M_\varphi, D_\varphi;g_\varphi)$};
		\node (5) at (7,1.2) {$\calR^+(M_\varphi, D_\varphi;\tilde g)$};
		\node (6) at (11,1.2) {$\calR^+((M_\varphi)_{\varphi'}, (D_\varphi)_{\varphi'}; \tilde g_{\varphi'})$};

		\node (7) at (11,-1.2) {$\calR^+(M)$};
		
		\draw[->, dashed] (0) to node[above] {$\overline\calS_\varphi$} (1);
		\draw[-, color=white, line width=20](1) to (2);
		\draw[->,dashed] (1) to node[above] {$\overline\calS_\varphi'$}(2);
		\draw[right hook->] (3) to node[left] {$\iota$} (0);
		\draw[->] (3) to (4);
		\draw[right hook->] (4) to (1);
		\draw[->] (4) to node[above]{(1)} (5);
		\draw[left hook->] (5) to (1);
		\draw[->] (5) to (6);
		\draw[right hook->] (6) to (2);

		\draw[->] (2) to node[right]{$f^*$} (7);
	\end{tikzpicture}
	\end{center}
	Here, the map $(1)$ is given by cutting out $g_\varphi$ and gluing in $\tilde g$. Since these are homotopic relative to the boundary, the inside triangle and hence the entire diagram commutes up to homotopy. Therefore, the composition $f^*\circ\overline\calS_{\varphi'}\circ\overline\calS_\varphi\circ \iota$ is homotopic to the inclusion if and only if the top row composition in this diagram is. In contrast to $f^*\circ\overline\calS_{\varphi'}\circ\overline\calS_\varphi\circ \iota$ this composition only consists of actual maps which are given as follows: For $h\in\calR^+(M\setminus D)_{g_\circ}$ we have
	\begin{center}
	\begin{tikzpicture}
		\node (3) at (0,1.2) {$h\cup g_{\tor}$};
		\node (4) at (3,1.2) {$h\cup g_\varphi$};
		\node (5) at (6,1.2) {$h\cup \tilde g$};
		\node (6) at (9,1.2) {$h\cup \tilde g_{\varphi'}$};
		\node (7) at (9,0) {$h\cup f^*\tilde g_{\varphi'}$};
		
		\draw[|->] (3) to (4);
		\draw[|->] (4) to (5);
		\draw[|->] (5) to (6);
		\draw[|->] (6) to (7);
	\end{tikzpicture}
	\end{center}
	We will denote the path component of a psc-metric $g$ on $M$ by $[g]\in\pi_0(\calR^+(M))$. By the above argument it suffices to show that $[f^*\tilde g_{\varphi'}]=[g_{\tor}]\in\pi_0(\calR^+(D)_{g_\circ})$. This is implied by \pref{Lemma}{lem:surgery-map-round-metric} as follows: We can assume that $D\subset S^{d-1}$ is a hemisphere and we have $f^*\circ\overline\calS_{\varphi'}\circ\overline\calS_{\varphi}([g_{\tor}\cup g_{\tor}])\sim [g_{\tor}\cup f^*\tilde g_{\varphi'}]$ by the above argument for $M=S^{d-1}$ and $h=g_{\tor}$. After possibly changing the coordinates of the disk $D$ we may assume the following: If $a^k\colon S^{d-1}\congarrow (S^{k-1}\times D^{d-k})\cup (D^k\times S^{d-k-1})$ is the solid torus decomposition then $a^{k}\circ\varphi$ is given by the inclusion of the first factor and $a^k_\varphi\circ\varphi'\colon S^{k}\times D^{d-k-1}\embeds (S^{k}\times D^{d-k-1})\cup(S^{k}\times D^{d-k-1})$ is also given by the inclusion of the first factor (cf. \pref{Section}{sec:handle-decomposition}). In this case we have $f=\eta_k$. The metric $[g_{\tor}\cup g_{\tor}]$ is homotopic to the round metric by \cite[Lemma 1.9]{walsh_parametrized1} and we have
	\begin{align*}
		[g_{\tor}\cup f^*\tilde g_{\varphi'}]&\sim \eta_k^*\circ\overline\calS_{\varphi'}\circ\overline\calS_{\varphi} ([g_{\tor}\cup g_{\tor}])\\&\sim \eta_k^*\circ\overline\calS_{\varphi'}\circ\overline\calS_{\varphi} ([g_\circ]) \overset{\text{\pref{Lemma}{lem:surgery-map-round-metric}}}\sim [g_\circ]
		 \sim [g_{\tor}\cup g_{\tor}].
	\end{align*}
	Also $g_1\coloneqq g_{\tor}\cup f^*\tilde g_{\varphi'}$ and $g_2\coloneqq g_{\tor}\cup g_{\tor}$ are both in the image of the inclusion map $\calR^+(D)_{g_\circ}\embeds\calR^+(S^{d-1})$ which is a weak equivalence and since $[g_1]=[g_2]$ it follows that $[g_{\tor}]=[f^*\tilde g_{\varphi'}]\in\pi_0(\calR^+(D)_{g_{\circ}})$.
\end{proof}	

	\begin{lem}\label{lem:surgery-map-round-metric}
		Let $g_\circ\in\calR^+(S^{d-1})$ be the round metric and let $a^k\colon S^{d-1}\congarrow (S^{k-1}\times D^{d-k})\cup (D^{k}\times S^{d-k-1})$ be the solid torus decomposition. Let $\varphi\colon S^{k-1}\times D^{d-k}\embeds S^{d-1}$ and let $\varphi'\colon S^{k}\times D^{d-k-1}\embeds S^{d-1}_\varphi$ be surgery data such that $a^k\circ\varphi$ and $a^k_\varphi\circ\varphi'$ are both given by the inclusion of the respective first factor. Then $\overline\calS_{\varphi'}\circ\overline\calS_{\varphi} ([g_\circ]) \sim \overline\calS_{\eta_k}([g_\circ]) = (\eta_k)_*[g_\circ]$.
	\end{lem}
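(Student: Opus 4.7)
The plan is to pick concrete representatives in each relevant path component that have the required standard form along the surgery data, so that the cut-and-paste portions of $\overline\calS_\varphi$ and $\overline\calS_{\varphi'}$ can be carried out explicitly, and then to use Walsh's Lemma 1.9 from \cite{walsh_parametrized1} (stating that $g_{\tor}^n\cup g_{\tor}^n\sim g_\circ$ on $S^n=D^n\cup D^n$ through psc metrics) three times to move between these standard forms.

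First I would replace $g_\circ$ by the \emph{standard mixed-torpedo metric}
\[g_{mix}^k\coloneqq (g_\circ^{k-1}+g_{\tor}^{d-k})\cup (g_{\tor}^k+g_\circ^{d-k-1})\]
on $S^{d-1}$ under $a^k$. By Walsh's Lemma 1.9 (as used already in the preceding proof to treat the case $M=S^{d-1}$), one has $[g_{mix}^k]=[g_\circ]$ in $\pi_0(\calR^+(S^{d-1}))$, and by construction $g_{mix}^k\in\calR^+(S^{d-1},\varphi)$. The cut-and-paste step of $\overline\calS_\varphi$ removes $g_\circ^{k-1}+g_{\tor}^{d-k}$ on $\im\varphi$ and glues in $g_{\tor}^k+g_\circ^{d-k-1}$, producing
\[g_{\varphi,1}=(g_{\tor}^k\cup g_{\tor}^k)+g_\circ^{d-k-1}\]
on $(S^{d-1})_\varphi\cong S^k\times S^{d-k-1}$ via $a^k_\varphi$.

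Next I would apply Walsh's Lemma 1.9 once on the $S^k$-factor (forward) and once on the $S^{d-k-1}$-factor (backward) to obtain
\[g_{\varphi,1}\sim\tilde g\coloneqq g_\circ^k+(g_{\tor}^{d-k-1}\cup g_{\tor}^{d-k-1}),\]
which has standard form $g_\circ^k+g_{\tor}^{d-k-1}$ on $\im\varphi'$, hence lies in $\calR^+((S^{d-1})_\varphi,\varphi')$. Consequently $\overline\calS_{\varphi'}\circ\overline\calS_\varphi([g_\circ])=[\overline\calS_{\varphi'}(\tilde g)]$, and the cut-and-paste step for $\varphi'$ gives, under the identification $(a^k_\varphi)_{\varphi'}$, the metric
\[h\coloneqq(g_{\tor}^{k+1}+g_\circ^{d-k-2})\cup (g_\circ^k+g_{\tor}^{d-k-1});\]
that is, $\overline\calS_{\varphi'}(\tilde g)=((a^k_\varphi)_{\varphi'})^*h$. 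Observing that $(a^{k+1})^*h=g_{mix}^{k+1}$ and that $\eta_k=((a^k_\varphi)_{\varphi'})^{-1}\circ a^{k+1}$, I conclude
\[\overline\calS_{\varphi'}(\tilde g)=(\eta_k^{-1})^*(a^{k+1})^*h=(\eta_k^{-1})^*g_{mix}^{k+1}\sim (\eta_k^{-1})^*g_\circ=(\eta_k)_*g_\circ,\]
where the last isotopy uses Walsh's Lemma 1.9 a third time combined with continuity of the pullback $(\eta_k^{-1})^*\colon\calR^+(S^{d-1})\to\calR^+(((S^{d-1})_\varphi)_{\varphi'})$.

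The hard part will not be any further geometric input, but the careful bookkeeping with the five identifications $a^k$, $a^k_\varphi$, $(a^k_\varphi)_{\varphi'}$, $a^{k+1}$ and $\eta_k$ so that the cut-and-paste operations match up on the nose with the standard mixed-torpedo metric on the opposite side; once these identifications are set up correctly, the argument reduces to three invocations of Walsh's Lemma 1.9 and two explicit cut-and-paste calculations.
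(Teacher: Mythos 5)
Your proposal is correct and follows essentially the same route as the paper: replace $g_\circ$ by the mixed torpedo metric via Walsh's Lemma 1.9, perform the two cut-and-paste steps in the standard solid-torus coordinates, use the lemma forward on one sphere factor and backward on the other in between, and finish with $\bigl((a^k_\varphi)_{\varphi'}\circ\eta_k\bigr)=a^{k+1}$ and one last application of the lemma. The only cosmetic difference is that the paper organizes the coordinate bookkeeping through the naturality relation $\overline\calS_\varphi\circ\overline\calS_{(a^k)^{-1}}\sim\overline\calS_{(a^k_\varphi)^{-1}}\circ\overline\calS_{a^k\circ\varphi}$, whereas you carry the identifications along by hand.
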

	
\begin{proof}
	Let $g^k_{\mathrm{mtor}}\coloneqq(g_\circ^{k-1} + g_{\tor}^{d-k})\cup (g_{\tor}^k + g_\circ^{d-k-1})$ denote the \emph{mixed torpedo metric} on$(S^{k-1}\times D^{d-k})\cup (D^{k}\times S^{d-k-1})$. By \cite[Lemma 1.9]{walsh_parametrized1}) we have $(a^k){}^*g_{\mtor}^k\sim g_\circ$ and hence
	\begin{align*}
		\overline\calS_\varphi(g_\circ) &\sim \overline\calS_\varphi\bigl((a^k){}^*g^k_{\mtor}\bigr) = \overline\calS_\varphi(\overline\calS_{(a^k)^{-1}}(g^k_{\mtor}))\\
			&\sim \overline\calS_{(a_\varphi^k{})^{-1}}\overline\calS_{a^k{}\circ\varphi}(g^k_{\mtor}) = (a_\varphi^k)^*\overline\calS_{a^k{}\circ\varphi}(g^k_{\mtor})
	\end{align*} 
	Now $a^k\circ\varphi$ is given by the inclusion and hence 
	\[\overline\calS_{a^k\circ\varphi}(g^k_{\mtor})\sim (g_{\tor}+g_\circ)\cup (g_{\tor} + g_\circ) \sim g_\circ + g_\circ \sim \underbrace{(g_{\circ}+g_{\tor})\cup (g_{\circ} + g_{\tor})}_{\eqqcolon\overline g}\]
	on $(D^k\times S^{d-k-1})\cup(D^k\times S^{d-k-1}) = S^k\times S^{d-k-1} = (S^k\times D^{d-k-1})\cup (S^k\times D^{d-k-1})$. We can now compute
	\begin{align*}
		\overline\calS_{\varphi'}\overline\calS_\varphi(g_\circ) &\sim (a_\varphi^k{})_{\varphi'}{}^*\ \underbrace{\overline\calS_{(a^k_\varphi)\circ\varphi'}\Bigl((g_{\circ}+g_{\tor})\cup (g_{\circ} + g_{\tor})\Bigr)}_{=(g_{\tor}+g_{\circ})\cup (g_{\circ} + g_{\tor}) = g^{k+1}_{\mtor}}\\
		&\sim (a^k_\varphi)_{\varphi'}{}^*\ g^{k+1}_{\mtor}
	\end{align*}
	We have to show that $(a^k_\varphi)_{\varphi'}{}^*\ g^{k+1}_{\mtor}\sim \eta_k{}_*g_\circ$ which is equivalent to 
	\[\eta_k^*\left((a^k_\varphi)_{\varphi'}{}\right)^*\ g^{k+1}_{\mtor}\sim g_\circ\]
	But $\eta_k$ was chosen such that $\bigl((a^k_\varphi)_{\varphi'}\circ\eta_k\bigr)=a^{k+1}$ and therefore 
	\begin{align*}
		\eta_k^*(a^k_\varphi)_{\varphi'}{}^*g^{k+1}_{\mtor}&=(a^{k+1})^*g^{k+1}_{\mtor} \sim g_\circ. \qedhere
	\end{align*}
\end{proof}
	
\noindent We get the following Corollary which follows immediately from \pref{Lemma}{lem:S-is-well-defined} and \pref{Theorem}{thm:presentation-of-bordism-category-ab}. 

\begin{cor}\label{cor:S-is-well-defined}
	Let $d\ge7$. Then there is a unique functor\footnote{By abuse of notation, we call this functor $\overline\calS$ again.}
	\[\overline\calS\colon\bord_d^{-1,2}\longrightarrow\hotop\]
	which satisfies:
	\begin{enumerate}
		\item $\overline\calS(M) = \calR^+(M)$
		\item $\overline\calS_{(M\times I, \id,f)} =  f_*$
		\item $\overline\calS_{(\tr\varphi,\id,\id)}(g) = \overline\calS_\varphi$.
	\end{enumerate}
\end{cor}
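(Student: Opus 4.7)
The strategy is to combine the two preceding results formally. \pref{Lemma}{lem:S-is-well-defined} upgrades $\overline\calS$ from the free category to a well-defined functor $\widetilde\calS\colon\calX_d^{-1,2}\to\hotop$; by construction it is the identity on objects and satisfies $\widetilde\calS(I_f)=f_*$ and $\widetilde\calS(S_\varphi)=\overline\calS_\varphi$. \pref{Theorem}{thm:presentation-of-bordism-category-ab} tells us that $\calP^{-1,2}\colon\calX_d^{-1,2}\to\bord_d^{-1,2}$ is an equivalence of categories; since $\calP^{-1,2}$ is bijective (in fact the identity) on object sets, the fully faithful clause in the definition of equivalence upgrades to an isomorphism of categories, so $\calP^{-1,2}$ admits a strict inverse $\calQ\colon\bord_d^{-1,2}\to\calX_d^{-1,2}$.

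I would then define
\[\overline\calS\coloneqq\widetilde\calS\circ\calQ\colon\bord_d^{-1,2}\too\hotop.\]
Property (1) is immediate because both functors are the identity on objects. For (2), note that $(M\times I,\id,f)=\calP^{-1,2}(I_f)$, so $\calQ$ sends this cobordism class back to $I_f$ and $\widetilde\calS$ then maps it to $f_*$. Property (3) follows identically from $(\tr\varphi,\id,\id)=\calP^{-1,2}(S_\varphi)$.

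Uniqueness is a formal consequence of fullness: any functor $F\colon\bord_d^{-1,2}\to\hotop$ satisfying (1)--(3) agrees with $\overline\calS$ on the images $\calP^{-1,2}(I_f)$ and $\calP^{-1,2}(S_\varphi)$, and since $\calP^{-1,2}$ is full every morphism in $\bord_d^{-1,2}$ can be written as a composition of such elementary pieces, forcing $F$ and $\overline\calS$ to coincide (up to natural isomorphism). At this point the proof is essentially a bookkeeping exercise and no obstacle is expected; the genuine content was already absorbed into \pref{Lemma}{lem:S-is-well-defined} (invariance of $\overline\calS$ under the handle-cancellation relation on $S^{d-1}$) and \pref{Theorem}{thm:presentation-of-bordism-category-ab} (the presentation of $\bord_d^{-1,2}$ via the surgery datum category).
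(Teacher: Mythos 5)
Your proposal is correct and is exactly the paper's argument: the paper derives the corollary "immediately" from \pref{Lemma}{lem:S-is-well-defined} and \pref{Theorem}{thm:presentation-of-bordism-category-ab}, i.e.\ by transporting the functor on $\calX_d^{-1,2}$ across the equivalence $\calP^{-1,2}$ (which, being fully faithful and the identity on objects, is an isomorphism of categories), with uniqueness following from fullness since the $I_f$ and $S_\varphi$ generate all morphisms.
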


\begin{cor}\label{cor:S-exists}
	Let $W = (W,\psi_0,\psi_1)\colon M_0\leadsto M_1$ be an admissible cobordism. Then there is a well defined homotopy class of a map $\overline\calS_W\colon \calR^+(M_0)\to\calR^+(M_1)$. If $W^{\op}\coloneqq (W^{\op},\psi_1,\psi_0)$ is also admissible, i.e. $\psi_0^{-1}\colon M_0\embeds W$ is also $2$-connected, then $\overline\calS_{W}$ is a homotopy equivalence and a homotopy-inverse is given by $\calS_{W^\op}$. 
\end{cor}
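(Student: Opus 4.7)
The first assertion requires no work: admissibility of $W$ means $\psi_1^{-1}\colon M_1\embeds W$ is $2$-connected, so $(W,\psi_0,\psi_1)$ is a morphism in $\bord_d^{-1,2}(M_0,M_1)$, and I would simply set $\overline\calS_W\coloneqq \overline\calS(W,\psi_0,\psi_1)$ using the functor from \pref{Corollary}{cor:S-is-well-defined}.

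For the second assertion, once $W^{\op}$ is also admissible, both $W$ and $W^{\op}$ become morphisms in $\bord_d^{2,2}$, so by the remark following \pref{Definition}{def:surgeryequivalence} both $\overline\calS_W$ and $\overline\calS_{W^{\op}}$ are homotopy equivalences. The real task is to check that $\overline\calS_{W^{\op}}$ is an honest homotopy inverse to $\overline\calS_W$; my plan is to reduce to the single-handle case and then telescope along an admissible handle decomposition.

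First I will treat a single $k$-surgery datum $\varphi$ with $3\le k\le d-3$. Unwinding \pref{Definition}{def:surgeryequivalence} writes $\overline\calS_{\varphi^{\op}}\circ\overline\calS_\varphi$ as the zig-zag
\begin{multline*}
\calR^+(M)\dashrightarrow \calR^+(M,\varphi) \congarrow \calR^+(M_\varphi,\varphi^{\op}) \embeds \calR^+(M_\varphi)\\
\dashrightarrow \calR^+(M_\varphi,\varphi^{\op}) \congarrow \calR^+(M,\varphi) \embeds \calR^+(M),
\end{multline*}
in which the central pair \enquote{inclusion followed by its homotopy inverse} is homotopic to the identity of $\calR^+(M_\varphi,\varphi^{\op})$ by \pref{Remark}{rem:weak-equivalence}. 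Once that pair collapses, the two identifications $\congarrow$ cancel and the remaining outer pair is again an inclusion composed with its homotopy inverse, hence homotopic to the identity. The symmetric argument gives $\overline\calS_\varphi\circ\overline\calS_{\varphi^{\op}}\simeq\id$, so $\overline\calS_{\varphi^{\op}}$ is a homotopy inverse of $\overline\calS_\varphi$.

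For the general case I will pick an admissible handle decomposition of $W$ with surgery data $\varphi_1,\dots,\varphi_n$ and identifying diffeomorphisms $f_0,\dots,f_n$. Admissibility of $W^{\op}$ forces every index to satisfy $3\le k_i\le d-3$, and the reversed handle decomposition is then an admissible handle decomposition of $W^{\op}$ built from the data $\varphi_n^{\op},\dots,\varphi_1^{\op}$ and $f_n^{-1},\dots,f_0^{-1}$ (together with the swap $\psi_0\leftrightarrow\psi_1$ of boundary identifications). Expanding $\overline\calS_W$ and $\overline\calS_{W^{\op}}$ as the corresponding alternating compositions of single-handle surgery maps and pullbacks by diffeomorphisms, the product $\overline\calS_{W^{\op}}\circ\overline\calS_W$ telescopes from the innermost layer outward: each adjacent pair $(f_i^{-1})_*\circ(f_i)_*$ is the identity on the nose, and each then-exposed pair $\overline\calS_{\varphi_i^{\op}}\circ\overline\calS_{\varphi_i}$ is homotopic to the identity by the single-handle step. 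Iterating produces $\id_{\calR^+(M_0)}$, and the other composition is handled symmetrically. The hardest part will be the bookkeeping: one must verify that the reversed handle decomposition of $W^{\op}$ really takes the asserted form, modulo the canonical identifications $((N_i)_{\varphi_i})_{\varphi_i^{\op}}\cong N_i$ and the boundary relabeling, so that the pairs cancel cleanly; once this is organized, the argument is purely formal.
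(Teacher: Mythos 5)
Your overall strategy is correct and is, as far as I can tell, exactly the one the paper intends (the corollary is stated without proof, as a formal consequence of the preceding machinery): the first claim is immediate from \pref{Corollary}{cor:S-is-well-defined}, the single-handle cancellation $\overline\calS_{\varphi^{\op}}\circ\overline\calS_\varphi\simeq\id$ works exactly as you describe, and telescoping along a handle decomposition finishes the argument. There is, however, one misstep in the justification: the sentence \enquote{Admissibility of $W^{\op}$ forces every index to satisfy $3\le k_i\le d-3$} is false as stated. An admissible handle decomposition of $W$ only constrains the indices from above ($k_i\le d-3$), and admissibility of $W^{\op}$ does not retroactively improve an arbitrarily chosen decomposition --- a cylinder, for instance, admits an admissible decomposition containing a cancelling $1$-/$2$-handle pair, and for a handle of index $k_i\le 2$ the map $\overline\calS_{\varphi_i^{\op}}$ is not even defined, since the surgery datum $\varphi_i^{\op}$ then has codimension $k_i<3$, so the relevant inclusion in \pref{Theorem}{thm:chernysh} need not be a weak equivalence and admits no homotopy inverse. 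What is true, and what your argument needs, is that when \emph{both} boundary inclusions are $2$-connected and $d\ge7$, the space $\calH_{3,d-3}(W)$ is non-empty by \pref{Theorem}{thm:space-of-gmf-connected}, so one can \emph{choose} a handle decomposition of $W$ with all indices in $[3,d-3]$; its reversal is then an admissible decomposition of $W^{\op}$ and your telescoping goes through. This choice is also what makes $\overline\calS_W$ a homotopy equivalence in the first place: the remark after \pref{Definition}{def:surgeryequivalence} applies to morphisms of $\calC(G^{2,2})$, so one must first exhibit $W$ as a composite of such generators. With this correction the proof is complete.
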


\begin{remark}\label{rem:walsh}
	The constructions from the proof of \cite[Theorem 3.1]{walsh_parametrized1} show the following: If $W = (W,\id,\id)\colon M_0\leadsto M_1$ is an admissible cobordism, $g_0\in\calR^+(M_0)$ and $g_1\in\calR^+(M_1)$ are metrics such that $\overline\calS_W([g_0])\sim [g_1]$, then there exists a metric $G\in\calR^+(W)_{g_0, g_1}$.
\end{remark}

\subsection{Surgery invariance of \texorpdfstring{$\overline\calS$}{the surgery map}}\label{sec:surgeryinvariance}

In this section we prove the following Lemma.

\begin{lem}\label{lem:surgeryinvariance}
	Let $d\ge7$ and let $M_0$, $M_1$ be two $(d-1)$-manifolds, let $W=[W,\id,\id]\in\mor{\bord_d^{-1,2}}{M_0,M_1}$ and let $\Phi\colon S^{k-1}\times D^{d-k+1}\embeds \mathrm{Int }\ W$ be an embedding with $3\le k\le d-3$. Then $\overline\calS_W \sim \overline\calS_{W_\Phi}$.
\end{lem}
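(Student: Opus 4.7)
The plan is to realise $W_\Phi$ as a composite $W_0\cup\bigl(\tr(\varphi)\cup\tr(\varphi^{\op})\bigr)\cup W_1$ for a suitable splitting $W=W_0\cup_N W_1$ along an internal hypersurface $N$, and then to invoke functoriality of $\overline\calS$ together with \pref{Corollary}{cor:S-exists} to conclude that the inserted pair contributes a trivial homotopy class.

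First I would choose a Morse function $h\colon W\to[0,1]$ all of whose critical points have index at most $d-3$; such an $h$ exists because $W$ is admissible and $d\ge7$ (cf.\ \pref{Theorem}{thm:space-of-gmf-connected} and \pref{Remark}{rem:wall}). After a small isotopy of $\Phi$ and a local modification of $h$ near $\Phi$, I would arrange that the core sphere $\Phi(S^{k-1}\times\{0\})$ lies in a regular level set $N\coloneqq h^{-1}(\tfrac12)$ and that $h\circ\Phi$ depends only on one coordinate of $D^{d-k+1}$. With respect to an identification $D^{d-k+1}\cong D^{d-k}\times D^1$ this forces $\Phi=\varphi\times\iota$ on a collar $N\times(-\epsilon,\epsilon)$, for some $k$-surgery datum $\varphi\colon S^{k-1}\times D^{d-k}\embeds N$ and an interval embedding $\iota\colon D^1\embeds(-\epsilon,\epsilon)$. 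Setting $W_0\coloneqq h^{-1}[0,\tfrac12]$ and $W_1\coloneqq h^{-1}[\tfrac12,1]$, both pieces inherit handle decompositions with indices $\le d-3$, so they are admissible and define morphisms in $\bord_d^{-1,2}$.

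By direct inspection of what happens in the collar, the surgery $\Phi$ excises $\varphi(S^{k-1}\times D^{d-k})\times\iota(D^1)$ from $N\times(-\epsilon,\epsilon)$ and glues in $D^k\times S^{d-k}$, which is precisely the cobordism $\tr(\varphi)\cup_{N_\varphi}\tr(\varphi^{\op})\colon N\leadsto N$. Hence
\[W_\Phi\;\cong\;W_0\cup_N\bigl(\tr(\varphi)\cup_{N_\varphi}\tr(\varphi^{\op})\bigr)\cup_N W_1\]
as elements of $\mor{\bord_d^{-1,2}}{M_0,M_1}$. Applying the functor from \pref{Corollary}{cor:S-is-well-defined} gives
\[\overline\calS_{W_\Phi}\;\sim\;\overline\calS_{W_1}\circ\overline\calS_{\tr(\varphi^{\op})}\circ\overline\calS_{\tr(\varphi)}\circ\overline\calS_{W_0}\qquad\text{and}\qquad\overline\calS_W\;\sim\;\overline\calS_{W_1}\circ\overline\calS_{W_0}.\]
Because $3\le k\le d-3$, the datum $\varphi$ has index $k\le d-3$ and $\varphi^{\op}$ has index $d-k\le d-3$, so both $\tr(\varphi)$ and $\tr(\varphi^{\op})=\tr(\varphi)^{\op}$ are admissible. \pref{Corollary}{cor:S-exists} then identifies $\overline\calS_{\tr(\varphi^{\op})}$ with a homotopy inverse of $\overline\calS_{\tr(\varphi)}$, so the middle two factors compose to the identity and we obtain $\overline\calS_{W_\Phi}\sim\overline\calS_W$.

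The main obstacle is the first step: producing a Morse function $h$ compatible with $\Phi$ while retaining the index bound $\le d-3$. Once such an $h$ is in hand, admissibility of the halves $W_0,W_1$ is automatic, so the crux is the geometric argument that puts the core of $\Phi$ in a regular level set and forces $\Phi$ into product form on a collar. The ample codimension $d-k+1\ge4$ of the core sphere should allow a general-position perturbation pushing it off all critical points and onto a level set, after which a local plateau modification of $h$ on a neighbourhood of $\Phi(S^{k-1}\times\{0\})$ gives the desired product form without introducing new critical points of high index. The hypothesis $3\le k\le d-3$ is used twice and is tight: it guarantees that both $\varphi$ and $\varphi^{\op}$ define admissible cobordisms, which is exactly what \pref{Corollary}{cor:S-exists} needs in order to invert the inserted pair.
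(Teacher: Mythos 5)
Your cancellation endgame is correct: if $W$ could be written as $W_0\cup_N\tr(\varphi)\cup_{N_\varphi}\tr(\varphi)^{\op}\cup_N W_1$ with all four pieces in $\bord_d^{-1,2}$, then functoriality plus \pref{Corollary}{cor:S-exists} would indeed kill the middle pair, and the identification of $(N\times(-\epsilon,\epsilon))_\Phi$ with the double of the trace is exactly the diffeomorphism $\alpha$ the paper constructs in its $k=3$ case. The admissibility bookkeeping for the four pieces is also fine. The gap is entirely in the step you yourself flag as ``the main obstacle'', and it is not a technicality: general position can push the core sphere $\Sigma=\Phi(S^{k-1}\times\{0\})$ off the critical points and, dimension permitting, off their ascending/descending manifolds, but it can never push $\Sigma$ \emph{into} a regular level set --- lying inside a codimension-one submanifold is an infinitely non-generic condition, not an open dense one. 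Compressing an embedded $S^{k-1}\subset W^d$ into a hypersurface $N^{d-1}$, together with the framing condition needed to make $\Phi$ a product on a collar, is a genuine embedding-theoretic problem (of Haefliger type once $2k>d$), and nothing in your sketch addresses it. So the proof as written establishes the easy half of the lemma and asserts the hard half.

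The paper's proof is instructive about exactly this point. It performs your compression argument only for $k=3$, where the core is $S^2$: there it proves a separate statement (\pref{Lemma}{lem:connectivity-of-embeddings}) that the relevant maps of embedding spaces are $\pi_0$-surjective, using Smale--Hirsch and the Whitney embedding theorem, and this works precisely because the $2$-connectivity of $M_1\embeds W$ matches the dimension of $S^2$; it then isotopes $\Phi$ into a boundary collar and uses the trace-and-dual-trace picture you describe. For $4\le k\le d-3$ no such connectivity is available, and the paper abandons the level-set strategy altogether: it joins $\im\Phi$ to a collar of $M_1$ by a thickened arc, sets $W_1=\im c\ \#_\partial\ \im\Phi$ and $W_1'=\im c\ \#_\partial\ \im\Phi^{\op}$, checks that $W_0=W\setminus W_1$, $W_1$, $W_1'$ and $W_1^{\op}$ are admissible (this is where $k\ge4$ enters, since $(W_1,M_1)$ is only $(k-2)$-connected), and concludes from $\overline\calS_{W_\Phi}=\overline\calS_{W_1^{\op}\cup W_1'}\circ\overline\calS_W$ together with an explicit diffeomorphism $W_1^{\op}\cup W_1'\cong M_1\times[0,1]$ rel boundary. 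To repair your argument you would either have to supply the isotopy of $\Phi$ into product position (which in general you cannot), or switch to a decomposition of this latter kind.
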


\begin{proof}
	First we note that for $3\le k\le d-3$, $W_\Phi$ is again an admissible cobordism: Let $W^\circ\coloneqq W\setminus \im\Phi$. Then $W^\circ\embeds W$ is $(d-k)$-connected and $W^\circ\embeds W_\Phi$ is $(k-1)$-connected. We have the following diagram:
	\begin{center}
	\begin{tikzpicture}
		\node (0) at (0,1.2) {$W$};
		\node (1) at (4,1.2) {$W^\circ$};
		\node (2) at (8,1.2) {$W_\Phi$};
		\node (3) at (4,0) {$M_1$};
		
		\draw[->] (3) to node[below, sloped] {$2$-connected} (0);
		\draw[->] (3) to node[auto] {} (1);
		\draw[->] (3) to node[auto] {} (2);
		\draw[->] (1) to node[above] {$(d-k)$-connected} (0);
		\draw[->] (1) to node[above] {$(k-1)$-connected} (2);
	\end{tikzpicture}
	\end{center}	
	Since $3\le k \le d- 3$, the inclusions $M_1\embeds W^\circ$ and $M_1\embeds W_\Phi$ are $2$-connected and hence $W_\Phi$ is admissible.
	
	We first prove \pref{Lemma}{lem:surgeryinvariance} in the case that $k\ne3$. Let $c\colon M_1\times[1-\epsilon,1]\embeds W$ be a collar which does not intersect $\im\Phi$ and let $\gamma\colon[0,1]\times D^{d-1}\embeds W$ be an embedded, thickened path connecting $M_1\times\{1-\epsilon\}$ and $\im\Phi$. Let 
	\begin{align*}
		W_1&\coloneqq\im c\ \#_\partial\ \im\Phi \coloneqq \im c\cup\im\gamma\cup\im\Phi\\
		W_1'&\coloneqq\im c\ \#_\partial\ \im\Phi^{\op}\\
		W_0&\coloneqq W\setminus W_1.
	\end{align*}
	We choose $\gamma$, so that the boundaries of all of these are smooth. Then $W_1\simeq M_1\vee S^{k-1}$, $W_1' \simeq M_1\vee S^{d-k}$, $W_0\cup W_1 = W$ and $W_0\cup W_1' = W_\Phi$. Since $M_1\embeds W$ and $M_1\embeds W_\Phi$ are $2$-connected and $4\le k\le d-3$, the maps $M_1\vee S^{k-1}\simeq W_1\embeds W$ and $M_1\vee S^{d-k}\simeq W'\embeds W_\Phi$ are $2$-connected as well.
		\begin{figure}[ht]
		\centering
		\includegraphics[width=1\textwidth]{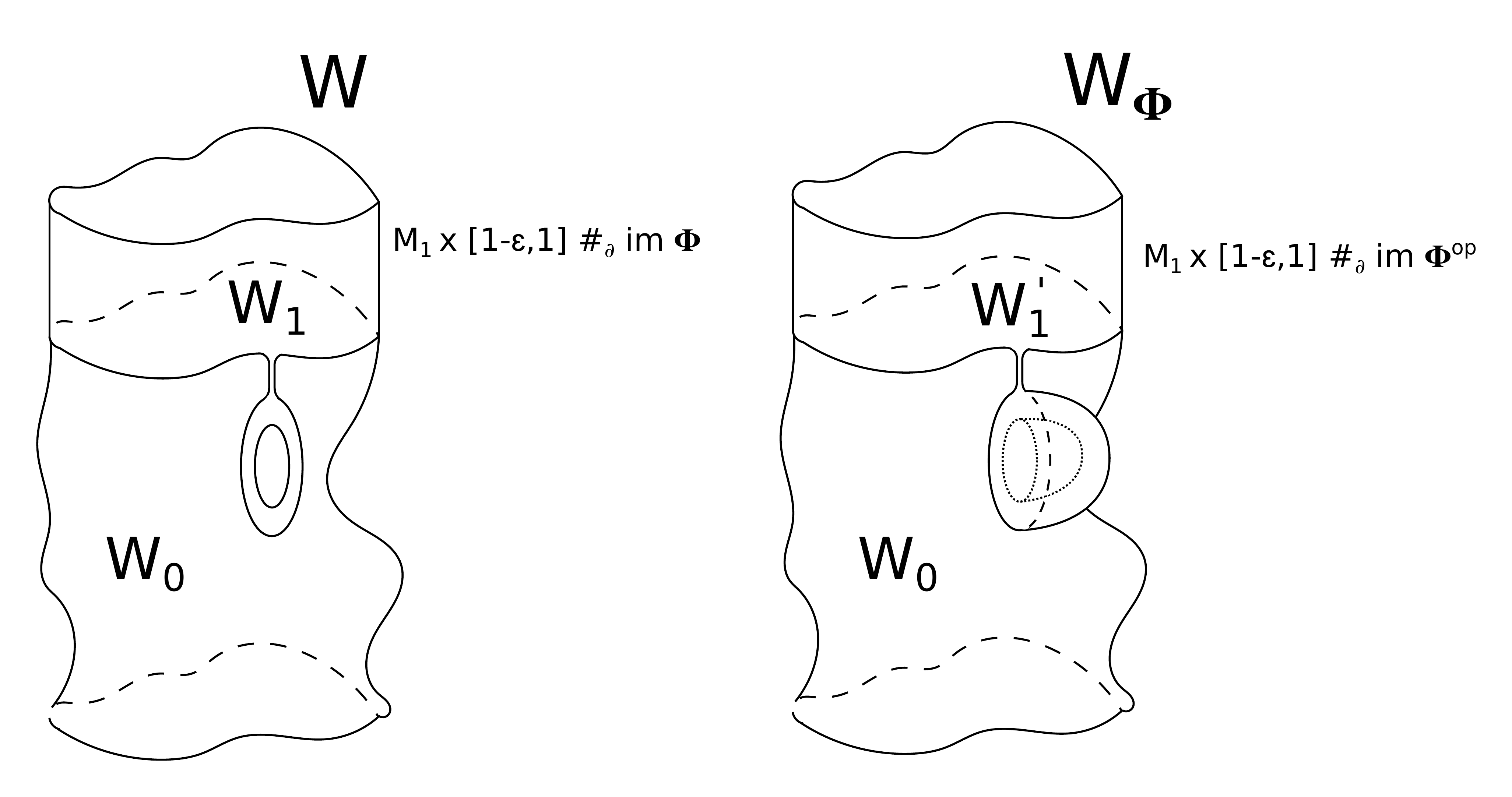}
		\caption{Surgery on the cobordism $W$}
		\end{figure}\\
	Note that $W_1$ and $W_1'$ have the same boundary $M_1'$ given by
	\[\partial W_1 = M_1\amalg \underbrace{(M_1\ \#\ \partial(\im\Phi))}_{\eqqcolon M_1'\cong M_1\# (S^{k-1}\times S^{d-k})} = M_1\amalg (M_1\ \#\ \partial(\im\Phi^{\op} )) = \partial W_1'.\]
	Next, we show that $W_0$, $W_1$, $W_1'$ and $W_1^{\op}$ are again admissible. Because of $W_1\simeq M_1\vee S^{k-1}$ and $W_1'\simeq M_1\vee S^{d-k}$ we get
	\begin{itemize}
		\item[-] $(W_1,M_1)$ is $(k-2)$-connected.
		\item[-] $(W_1,M_1')$ is $(d-k)$-connected.
		\item[-] $(W_1',M_1)$ is $(d-k-1)$-connected.
	\end{itemize}
	So, for $4\le k\le d-3$ all of these are at least $2$-connected and hence $W_1$, $W_1'$ and $W_1^{\op}$ are admissible\footnote{For $k=3$, the cobordism $W_1$ might not be admissible which is why this case is treated separately.}. For $W_0$ we note that $W$ is homotopy equivalent to $W_0$ with a $(d-k+1)$-cell attached along $\Phi(\{1\}\times S^{d-k})$:
	\begin{align*}
		W_0\cup D^{d-k+1} &= \bigl(W\setminus (\im\Phi\cup\im\gamma)\bigr)\cup D^{d-k+1}\\
			&=W\setminus (\underbrace{\im\Phi\setminus D^{d-k+1}}_{\simeq D^{d}}\cup\ \im\gamma)\simeq W.
	\end{align*}
	Therefore $W_0\embeds W$ is $(d-k)$-connected and we have the following diagram.
	\begin{center}
	\begin{tikzpicture}
		\node (0) at (0,0) {$M_1'$};
		\node (1) at (5,0) {$W_1$};
		\node (2) at (0,1.5) {$W_0$};
		\node (3) at (5,1.5) {$W$};
		
		\draw[right hook->] (0) to node[above]{$2$-connected} (1);
		\draw[right hook->] (0) to (2);
		\draw[right hook->] (1) to node[right]{$2$-connected} (3);
		\draw[right hook->] (2) to node[above]{$(d-k)$-connected} (3);
	\end{tikzpicture}
	\end{center}
	and hence $M_1'\embeds W_0$ is $2$-connected, too. 
	
	So we get a decompositions into admissible cobordisms $W=W_0\cup W_1$ and $W_\Phi = W_0\cup W_1'$ which implies $\overline\calS_W = \overline\calS_{W_1} \circ \overline\calS_{W_0}$ and $\overline\calS_{W_\Phi} = \overline\calS_{W_1'} \circ \overline\calS_{W_0}$. In the homotopy category $\hotop$ we have 
	\begin{align*}
		\overline\calS_{W_\Phi} &=  \overline\calS_{W_1'}\circ \underbrace{\overline\calS_{W_1\cup W_1^{\op}}}_{=\id} \circ \overline\calS_{W_0}\\
			&= \overline\calS_{W_1'}\circ\overline\calS_{W_1^{\op}} \circ \overline\calS_{W_1} \circ \overline\calS_{W_0} = \overline\calS_{W_1^{\op}\cup W_1'} \circ \overline\calS_{W},
	\end{align*}
	where $W_1^{\op}\cup W_1'$ denotes the manifold obtained by gluing the outgoing boundary of $W_1^\op$ to the incoming boundary of $W_1'$ along $\id_{M_1'}$. It suffices to show that $W_1^{\op}\cup W_1'$ is diffeomorphic to $M_1\times I$ relative to the boundary since $\overline\calS_W$ only depends on the diffeomorphism type of $W$ (see \pref{Lemma}{lem:S-is-well-defined} and \pref{Corollary}{cor:S-is-well-defined}).
		We have (see \pref{Figure}{fig:surgeryonbordismglued})
		\begin{align*}
			W_1^{\op}\cup W_1'  ={}& \left((M_1\times[0,\epsilon])\ \#_\partial\ S^{k-1}\times D^{d-k+1}\right)\\
				& \underset{M_1'}\cup \left(D^{k}\times S^{d-k}\ \#_\partial\ (M_1\times [1-\epsilon,1])\right)\\
				\cong{}& M_1\times [0,2\epsilon]\ \#\ \underbrace{\left((S^{k-1}\times D^{d-k+1})\underset{S^{k-1}\times S^{d-k}}\cup (D^{k}\times S^{d-k})\right)}_{\cong S^d}\\
				\cong{}& M_1\times [0,1].
		\end{align*}
		and these diffeomorphisms are supported on a small neighbourhood of $M_1'$ and hence relative to the boundary. This finishes the proof for the case $k\ne 3$. 
		\begin{figure}[ht]
		\centering
		\includegraphics[width=25em]{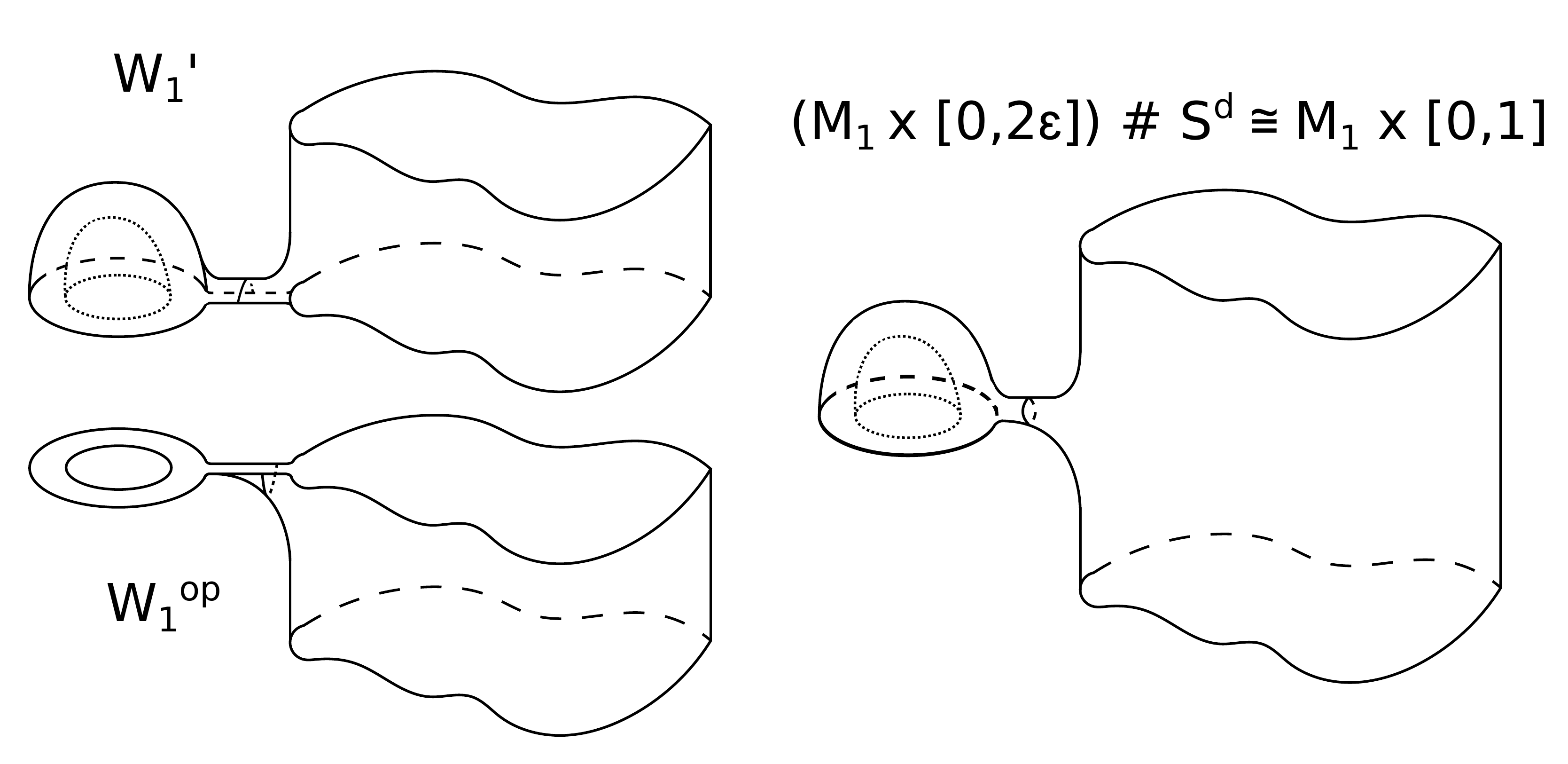}
		\caption{Gluing $W_1^{\op}$ to $W_1'$}\label{fig:surgeryonbordismglued}
		\end{figure}\\
		For the case $k=3$ we need a different argument, because $W_1$ might not be admissible in this case. Consider the map
		\[\emb(S^2\times D^{d-3}, M_1) \too \emb (S^2\times D^{d-2}, M_1\times [0,2])\]
		which is given by $\varphi\mapsto\Phi$ with $\Phi(x,(y,t))=(\varphi(x,y),t)$ for $x\in S^2$ and $(y,t)\in D^{d-2}\subset D^{d-3}\times [0,1]$. We also have a map $\emb (S^2\times D^{d-2}, M_1\times [0,2])\embeds\emb(S^2\times D^{d-2}, W)$ given by shrinking the interval and composing with the inclusion of the collar. We will use the following Lemma.
	\begin{lem}\label{lem:connectivity-of-embeddings}
		In the present situation, the maps $\emb(S^2\times D^{d-3}, M_1) \too \emb (S^2\times D^{d-2}, M_1\times [0,2])$ and $\emb(S^2\times D^{d-2}, M_1\times [0,2])\embeds\emb(S^2\times D^{d-2}, W)$ are both $0$-connected.
	\end{lem}
	\noindent By this Lemma we may isotope the embedding $\Phi\colon S^2\times D^{d-2}\embeds W$ so that its image is contained in the collar of the boundary $M_1$. So we may assume that $W=M_1\times[0,2]$. We abbreviate $M\coloneqq M_1$. Again by the above lemma, we can isotope $\Phi$ such that $\Phi(S^2\times D^{d-3}\times\{0\})\subset M\times \{1\}$, \ie $\Phi$ is a thickening of $\varphi\coloneqq\Phi|_{S^2\times D^{d-3}\times\{0\}}$. Let us now give the diffeomorphism
		{\small\[\underbrace{(M\times[0,\tfrac12]\underset{\varphi}\cup D^3\times D^{d-3})}_{\cong \tr\varphi} \cup \underbrace{(M\times[\tfrac12,1]\underset{\varphi}\cup D^3\times D^{d-3})}_{\cong (\tr\varphi)^{\op}}\overset{\alpha}\too \underbrace{(M\times I)\setminus\im\Phi\cup D^3\times S^{d-3}}_{\cong (M\times I)_\Phi}.\]}
		On $(M\setminus\im\varphi)\times I$ the diffeomorphism $\alpha$ shall be given by the identity. Next we take diffeomorphisms 
		\begin{align*}
			\alpha_1&\colon\im\varphi\times[0,\frac12]\congarrow(\im\varphi\times[0,\frac12])\setminus(\im\Phi\cap[0,\frac12])\\
			\alpha_2&\colon\im\varphi\times[\frac12,1]\congarrow(\im\varphi\times[\frac12,1])\setminus(\im\Phi\cap[\frac12,1]).
		\end{align*}
		On the $D^3\times D^{d-3}$-parts it is given by the inclusion of the lower or upper hemisphere $D^{3}\times S^{d-3}_\pm\subset D^{3}\times S^{d-3}$. The entire diffeomorphism is visualized in \pref{Figure}{fig:diffeomorphism-special-case}. Therefore we have $\overline\calS_{(M\times I)_\Phi}\sim \overline\calS_{\tr\varphi^{\op}}\circ \overline\calS_{\tr\varphi}\sim \id\sim \overline\calS_{M\times I}$ and the proof is finished modulo \pref{Lemma}{lem:connectivity-of-embeddings}.\qedhere
	\begin{figure}[ht]
		\centering
		\includegraphics[width=0.6\textwidth]{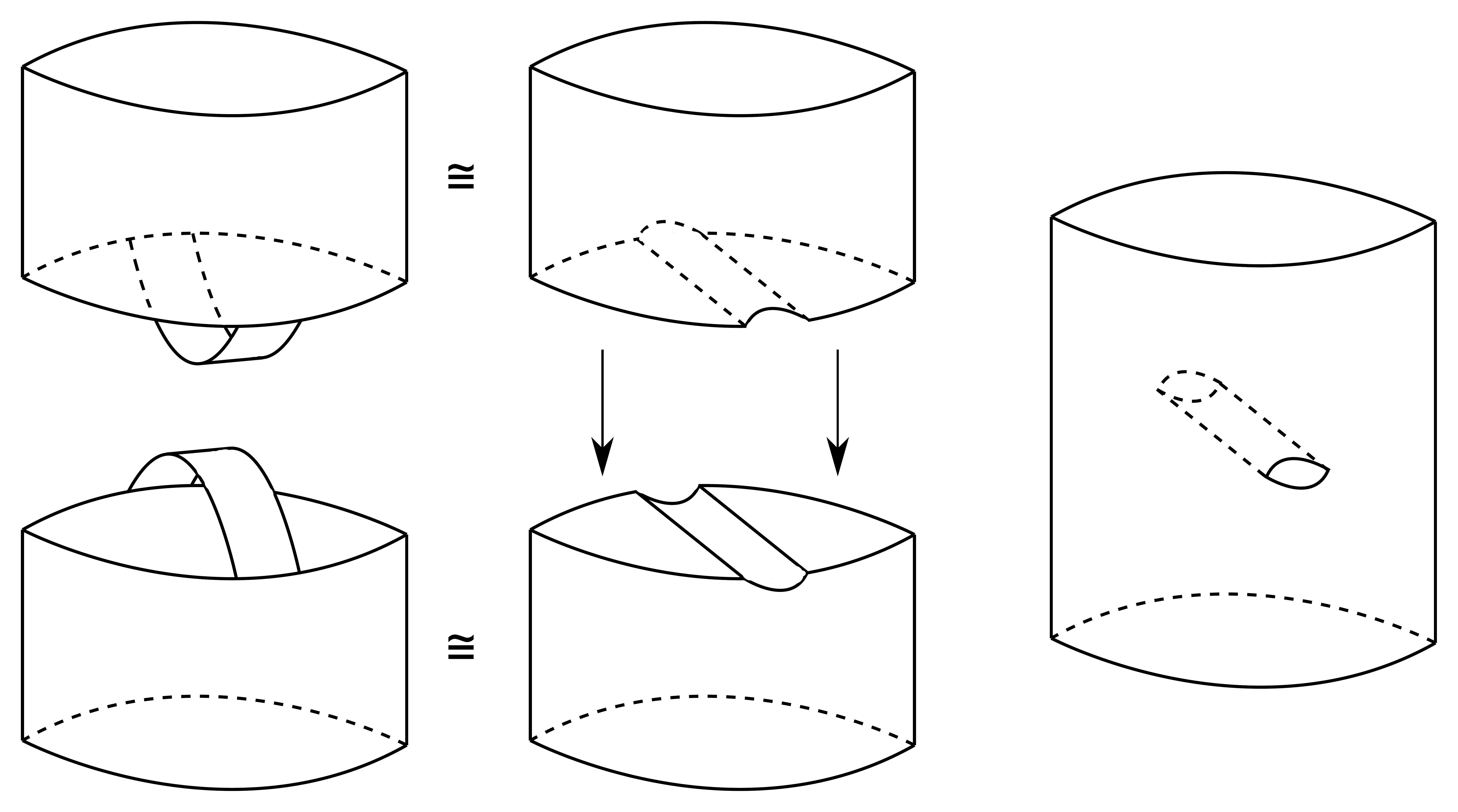}
		\caption{The diffeomorphism $\alpha$.}\label{fig:diffeomorphism-special-case}
		\end{figure}
	\end{proof}
	\begin{samepage}
	\begin{proof}[Proof of \pref{Lemma}{lem:connectivity-of-embeddings}]
	We have the following diagram
		\begin{center}
		\begin{tikzpicture}
			\node (01) at (0,3.3) {$\emb(S^2\times D^{d-3}, M_1)$};
			\node (11) at (5,3.3) {$\emb(S^2\times D^{d-2}, M_1\times [0,2])$};
			\node (0) at (0,2.2) {$\imm(S^2\times D^{d-3}, M_1)$};
			\node (2) at (0,1.1) {$\mon(TS^2\oplus \underline{\bbR}^{d-3}, TM_1)$};
			\node (4) at (0,0) {$\map(S^2, Fr(TM_1) )$};
			\node (5) at (5,0) {$\map(S^2, Fr(TM_1\oplus\bbR))$};
			\node (21) at (10,3.3) {$\emb(S^2\times D^{d-2}, W)$};
			\node (1) at (10,2.2) {$\imm(S^2\times D^{d-2}, W)$};
			\node (3) at (10,1.1) {$\mon(TS^2\oplus \underline{\bbR}^{d-2}, TW)$};
			\node (15) at (10,0) {$\map(S^2, Fr(W))$};

			\draw[->] (01) to node[below]{(4)} (11);
			\draw[->] (01) to node[auto]{(1)} (0);
			\draw[->] (21) to node[auto]{(6)} (1);
			\draw[->] (11) to node[below]{(5)} (21);
			\draw[->] (5) to node[auto]{(3)} (15);
			\draw[->] (0) to node[auto]{$\simeq$} (2);
			\draw[->] (1) to node[auto]{$\simeq$} (3);
			\draw[->] (2) to node[auto]{$\cong$} (4);
			\draw[->] (3) to node[auto]{$\cong$} (15);
			\draw[->] (4) to node[auto]{(2)} (5);
		\end{tikzpicture}
		\end{center}
		where $\mon$ denotes the space of bundle monomorphisms. Note that the bottom-most vertical maps are homeomorphisms because $S^2$ is stably parallelizable and the middle ones are homotopy equivalences by the Smale-Hirsch immersion theorem (cf. \cite[Section 3.9]{adachi_embeddings}). The  map (1) is $0$-connected because of the Whitney embedding (cf. \cite[pp. 26]{hirsch}) and the maps (5) and (6) are $\pi_0$-bijections again by the Whitney-embedding theorem. It remains to show that (2) and (3) are $0$-connected. Then the map (4) is $0$-connected, too. For (2) consider the following diagram of fibrations.
		\begin{center}
		\begin{tikzpicture}
			\node (2) at (0,2) {$\map(S^2, \mathrm{Gl}_{d-1}(\bbR))$};
			\node (3) at (7,2) {$\map(S^2, \mathrm{Gl}_d(\bbR))$};
			\node (4) at (0,1) {$\map(S^2, \mathrm{Fr}(TM_1))$};
			\node (5) at (7,1) {$\map(S^2, \mathrm{Fr}(TM_1\oplus \underline\bbR))$};
			\node (6) at (0,0) {$\map(S^2, M)$};
			\node (7) at (7,0) {$\map(S^2, M)$};
			
			\draw[->](2) to node[above]{$d-4$-conn.}(3);
			\draw[->](2) to node[left]{} (4);
			\draw[->](3) to node[left]{} (5);
			\draw[->](4) to node[right]{} (5);
			\draw[->](4) to node[right]{} (6);
			\draw[->](5) to node[left]{} (7);
			\draw[double equal sign distance](6) to (7);
		\end{tikzpicture}
		\end{center}
		Since $d-4\ge 3$, the map (2) is $0$-connected. The map (3) fits into a similar diagram:
		\begin{center}
		\begin{tikzpicture}
			\node (2) at (0,2) {$\map(S^2, \mathrm{Gl}_{d}(\bbR))$};
			\node (3) at (7,2) {$\map(S^2, \mathrm{Gl}_d(\bbR))$};
			\node (4) at (0,1) {$\map(S^2, \mathrm{Fr}(TM_1\oplus\underline\bbR))$};
			\node (5) at (7,1) {$\map(S^2, \mathrm{Fr}(W))$};
			\node (6) at (0,0) {$\map(S^2, M_1)$};
			\node (7) at (7,0) {$\map(S^2, W)$};
			
			\draw[double equal sign distance](2) to (3);
			\draw[->](2) to node[left]{} (4);
			\draw[->](3) to node[left]{} (5);
			\draw[->](4) to node[right]{} (5);
			\draw[->](4) to node[right]{} (6);
			\draw[->](5) to node[left]{} (7);
			\draw[](6) to (7);
		\end{tikzpicture}
		\end{center}
		Since $M_1\embeds W$ is $2$-connected, the bottom-most map is $0$-connected and hence so is the map (3). 
	\end{proof}
	\end{samepage}

\section{Tangential structures and proof of main result}\label{sec:surgerymap}

\subsection{Tangential structures}\label{sec:tangential-structures}

	In order to get rid of the connectivity assumptions of the category $\bord_d^{-1,2}$, we need tangential structures. For $d\ge0$ let $B\ort(d+1)$ be the classifying space of the $(d+1)$-dimensional orthogonal group and let $U_{d+1}$ be the universal vector bundle over $B\ort(d+1)$. Let $\theta\colon B\to B\ort(d+1)$ be a fibration. We call $\theta$ a \emph{tangential structure}.
	
	\begin{definition}
			A \emph{$\theta$-structure} on a real $\rk {d+1}$-vector bundle $V\to X$ is a bundle map $\hat l\colon V\to \theta^*U_{d+1}$. A \emph{$\theta$-structure on a manifold} $W^{{d+1}}$ is a $\theta$-structure on $TW$ and a \emph{$\theta$-manifold} is a pair $(W,\hat l)$ consisting of a manifold $W$ and a $\theta$-structure $\hat l$ on $W$. For $0\le k\le d$ a \emph{stabilized $\theta$-structure} on $M^k$ is a $\theta$-structure on $TM\oplus\underline\bbR^{{d+1}-k}$. 
	\end{definition}
	
	\noindent An important source of tangential structures are covers of $B\ort({d+1})$. For example we have $B\SO({d+1})\to B\ort({d+1})$ or $B\Spin({d+1})\to B\ort({d+1})$ or more generally $B\ort({d+1})\langle k\rangle\to B\ort({d+1})$, where $B\ort({d+1})\langle k\rangle$ denotes the $k$-connected cover of $B\ort({d+1})$. Other sources of tangential structures are Moore-Postnikov towers:
	
	\begin{definition}
		Let $M^{d-1}$ be a connected manifold, let $l\colon M\to B\ort({d+1})$ be the classifying map of the stabilized tangent bundle and let $\hat l\colon TM\oplus\underline\bbR^2\to U_{d+1}$ be a bundle map covering $l$. The $n$-th stage of the Moore-Postnikov tower for the map $l$ is called the \emph{stabilized tangential $n$-type of $M$}. 
	\end{definition} 

	\begin{example}\label{ex:tangential-structures}\leavevmode
		\begin{enumerate}
			\item The stabilized tangential $2$-type of a connected $\Spin$-manifold $M$ of dimension at least $3$ is $B\Spin({d+1})\times B\pi_1(M)$.
			\item The stabilized tangential $2$-type of a simply connected, non-spinnable manifold $M$ of dimension at least $3$ is $B\SO({d+1})$.
		\end{enumerate}
	\end{example}
	
	\noindent Recall the following lemma which is frequently used when working with surgery results concerning positive scalar curvature.

\begin{lem}[{\cite[Proposition 4]{kreck_duality}, \cite[Proposition, Appendix B]{hebestreitjoachim}, \cite[Lemma B.4]{ownthesis}}]\label{lem:connectivity-of-bordisms}
	Let $\theta\colon B\to B\ort(d+1)$ be a tangential structure, with $B$ of type $F_n$. Let $W^m\colon M_0\leadsto M_1$ be a $\theta$-cobordism and let $M_1\to B$ be $n$-connected. If $n\le\frac m2-1$, there exists a $\theta$-cobordism $W'\colon M_0\leadsto M_1$ such that $(W',M_1)$ is $n$-connected. If furthermore $M_0\to B$ is also $n$-connected, there exists a $\theta$-cobordism $W'\colon M_0\leadsto M_1$ such that $(W',M_i)$ is $n $-connected for $i=0,1$. Furthermore $W'$ is $\theta$-cobordant to $W$ relative to the boundary.
\end{lem}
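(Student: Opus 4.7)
The plan is a standard surgery-below-the-middle-dimension argument in the $\theta$-cobordism category, in the spirit of Kreck. I would construct $W'$ from $W$ by a sequence of $\theta$-surgeries performed in the interior of $W$, each of which replaces $W$ by a $\theta$-cobordant manifold rel $\partial W$ (the trace of such a surgery is an $(m+1)$-dimensional $\theta$-cobordism from $W$ to the result, which is trivial on a collar of $\partial W\times I$). The goal is to kill the relative homotopy groups $\pi_k(W,M_1)$ for $k=0,1,\dots,n$, proceeding by induction on $k$.

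For the inductive step, suppose $(W,M_1)$ is already $(k-1)$-connected for some $1\le k\le n$. The hypothesis that $M_1\to B$ is $n$-connected combined with $B$ being of type $F_n$ ensures that $\pi_k(W,M_1)$ is a finitely generated $\bbZ[\pi_1(W)]$-module: comparing the long exact sequences of $(W,M_1)$ and $(B,M_1)$ and using the lift of the classifying map, one shows the kernel and cokernel of $\pi_k(W,M_1)\to \pi_k(B,M_1)$ are finitely generated. Pick a generator $\alpha$ represented by $f\colon(D^k,S^{k-1})\to(W,M_1)$. Because $k\le n\le m/2-1$, we have $2k\le m-2$, so general position makes $f$ a smooth embedding with $f(\mathring{D}^k)\subset\mathring{W}$. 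Push $f(S^{k-1})$ slightly off $M_1$ along $f(D^k)$ to obtain an embedded $(k-1)$-sphere $\Sigma\subset\mathring{W}$ whose normal bundle is framed by extending the framing of $f$. Surgery on $\Sigma$ caps the pushed-in disk off to a ball, thereby killing $\alpha$; since $k\le n<m-n-1$ the surgery introduces no new classes in $\pi_{\le k-1}(W,M_1)$.

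The delicate point is the $\theta$-bookkeeping: to make the surgery a $\theta$-surgery one must extend the $\theta$-structure on $W$ across the trace of the $(k-1)$-surgery. The obstruction lives in a cohomology group with coefficients in $\pi_{k-1}(\mathrm{hofib}(\theta))$ and can be arranged to vanish by adjusting the framing of $\Sigma$; this adjustment is possible precisely because $k\le m/2-1$ leaves enough room in the codimension. Iterating over the finitely many generators and inducting on $k$ yields the desired $W'$ with $(W',M_1)$ being $n$-connected. For the second assertion of the lemma, I would run the analogous procedure symmetrically on the $M_0$-side; the surgeries needed there have index $\le n$ from the $M_0$-perspective, hence dual index $\ge m-n-1\ge n+1$ from the $M_1$-perspective, so they do not disturb the already-achieved $n$-connectivity of $M_1\hookrightarrow W'$. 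The main obstacle I expect is not the algebraic surgery bookkeeping but verifying that the $\theta$-structure extends at every step; this is where the $F_n$-hypothesis on $B$ and the codimension bound $n\le m/2-1$ both genuinely enter.
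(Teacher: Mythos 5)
The paper does not prove this lemma itself --- it is quoted from Kreck, from Hebestreit--Joachim, and from the author's thesis --- so your proposal has to stand on its own against the standard argument in those references. Its overall shape (interior surgeries whose traces give the rel-boundary $\theta$-cobordism, finiteness from the $F_n$-hypothesis, the dimension count $2k\le m-2$) is right, but the central surgery step is wrong. You propose to kill $\alpha\in\pi_k(W,M_1)$ by doing surgery on the $(k-1)$-sphere $\Sigma$ obtained by pushing $f(S^{k-1})$ off $M_1$ along the disk. But $\Sigma$ bounds the (pushed-in) disk $f(D^k)$, so it is null-homotopic in $W$; surgery on a null-homotopic, standardly framed $(k-1)$-sphere is just a connected sum of $W$ with $S^{k}\times S^{m-k}$ (or its twisted analogue). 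This \emph{creates} new homotopy in degrees $k$ and $m-k$ rather than killing anything: in $W'$ the class $\alpha$ is still represented by the collar annulus capped off with the core disk $D^k\times\{\mathrm{pt}\}$ of the new handle, and there is no reason for this to be trivial rel $M_1$. \enquote{Capping the disk off to a ball} produces an absolute $k$-sphere; it does not trivialize the relative class.

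The correct route --- and the one taken in the cited sources --- is to do surgery on the \emph{map} $W\to B$ rather than on the pair $(W,M_1)$: one kills generators of $\pi_j(B,W)$ for $j\le n+1$, each of which is represented (by general position, since $j-1\le n\le \frac m2-1$) by an embedded $(j-1)$-sphere in $\mathring W$ together with a null-homotopy of its image in $B$. That null-homotopy is what simultaneously untwists the normal bundle and extends the $\theta$-structure over the trace, so the \enquote{delicate $\theta$-bookkeeping} you defer is not an obstruction to be killed by adjusting framings --- it is built into the choice of which spheres one is allowed to surger. Once $W'\to B$ is $(n+1)$-connected, the exact sequence of the triple $(B,W',M_1)$ together with the hypothesis that $M_1\to B$ is $n$-connected gives $\pi_k(W',M_1)=0$ for $k\le n$ (note that your direct approach also silently needs this hypothesis even to lift $\alpha$ to $\pi_k(W)$, since otherwise $\partial\alpha\in\pi_{k-1}(M_1)$ need not vanish). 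The same statement applied to $M_0$ then yields the two-sided version at once, with no second round of surgeries.
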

	
\subsection{Proof of the main result}

We will now prove the general version of \pref{Theorem}{thm:a-general} which is the main result of this article.

\begin{definition}\label{def:omega}
	We define $\Omega_{d,2}$ to be the category given by the following:
	\begin{itemize}
		\item[] Objects are given by tupels $(M,B,\theta,\hat l)$ where 
		\begin{itemize}
			\item[-]$M$ is a closed $(d-1)$-dimensional manifold.
			\item[-]$\theta\colon B\to B\ort(d+1)$ is a $2$-coconnected tangential structure.
			\item[-]$\hat l$ is a stabilized $\theta$-structure such that the underlying map $l\colon M\to B$ is $2$-connected.
		\end{itemize}
		\item[] Morphisms $(M_0,B_0,\theta_0,\hat l_0)$ to $(M_1,B_1,\theta_1,\hat l_1)$ are given by equivalence classes of tupels $(W,\psi_0,\psi_1,\hat \ell, h)$ where
		\begin{itemize}
			\item[-] $h\colon B_0\to B_1$ is a map over $B\ort(d+1)$. This gives an induced map 
				\[\hat h\colon\theta_0^*U_{d+1}\to\theta_1^*U_{d+1}.\]
			\item[-] $(W,\psi_0,\psi_1)$ is a cobordism from $M_0$ to $M_1$.
			\item[-] $\hat\ell$ is a stabilized $\theta_1$-structure on $W$.
			\item[-] $\hat\ell|_{\partial_0W}= \hat h\circ\hat l_0\circ \mathrm{d}\psi_0$ und $\hat \ell|_{\partial_1W}=-\hat l_1\circ \mathrm{d}\psi_1$, where $-\hat l_1$ denotes the bundle map given by 
			\[\hat l_1\circ (\id\oplus\left(\begin{matrix} -1 & \\ & 1 \end{matrix}\right))\colon TM_1\oplus\underline\bbR^2 \to TM_1\oplus\underline\bbR^2\to \theta_1^*U_{d+1}\]
			\begin{figure}[ht]
		\centering
		\includegraphics[width=0.4\textwidth]{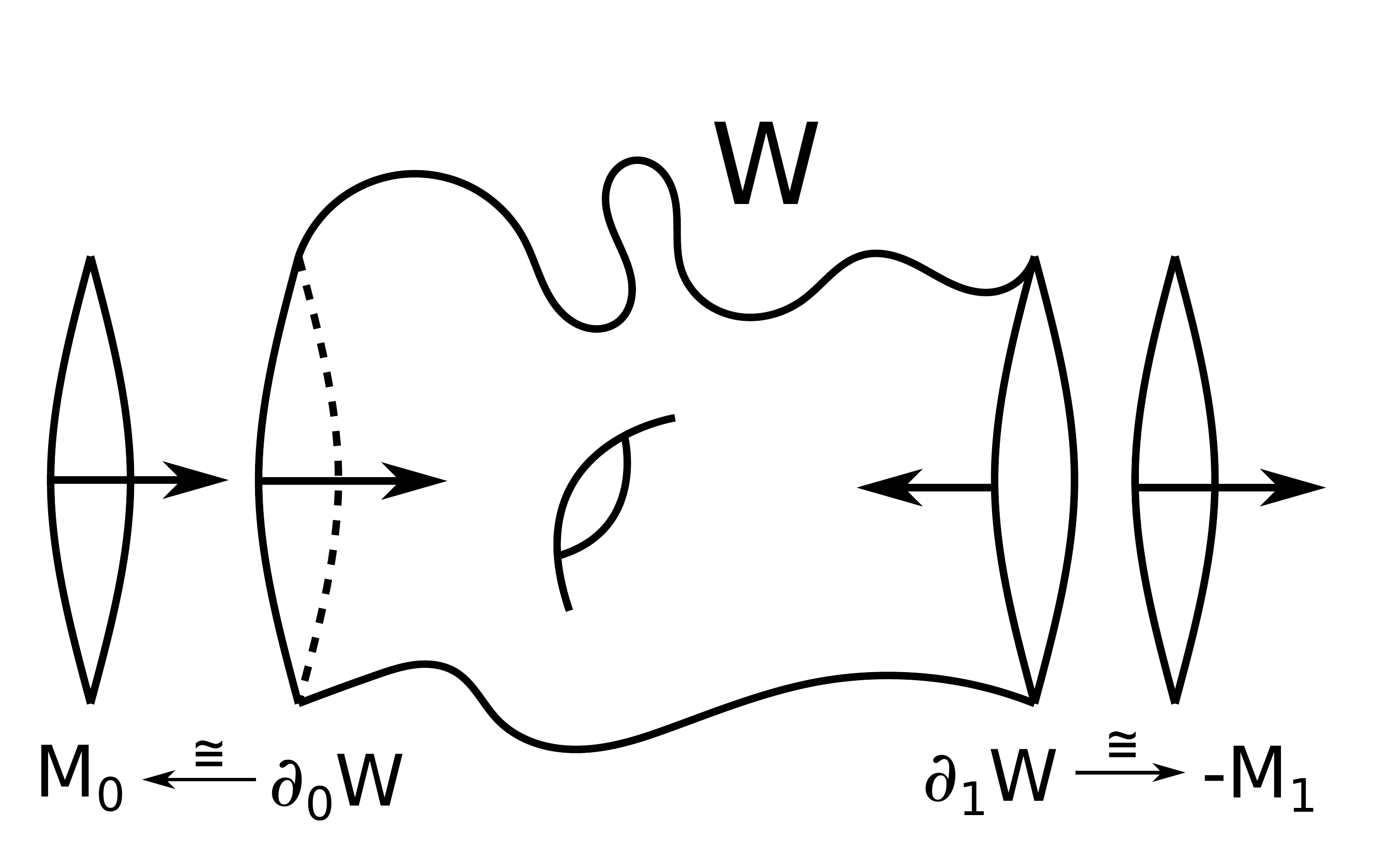}
		\caption{A representative of a morphism in $\Omega_{d,2}$}\label{fig:bordism-set}
		\end{figure}
			\item[-] $(W,\psi_0,\psi_1,\hat \ell, h)\sim (W',\psi_0',\psi_1',\hat \ell', h')$ if $h=h'$ and there exists a $(d+1)$-dimensional $\theta_1$-manifold $(X,\ell_X)$ with corners such that there exists a partition of $\partial X=\bigcup_{i=0,3} \partial_iX$ together with diffeomorphisms
		\begin{align*}
			\partial_0X&\congarrow M_0\times I &\partial_2X &\congarrow M_1\times I\\
			\partial_1X&\congarrow W &\partial_3X &\congarrow W'
		\end{align*}  
		such that $\theta$-structures and diffeomorphisms fit together (see \pref{Figure}{fig:bordism-set-relation}).
		\begin{figure}[ht]
		\centering
		\includegraphics[width=0.7\textwidth]{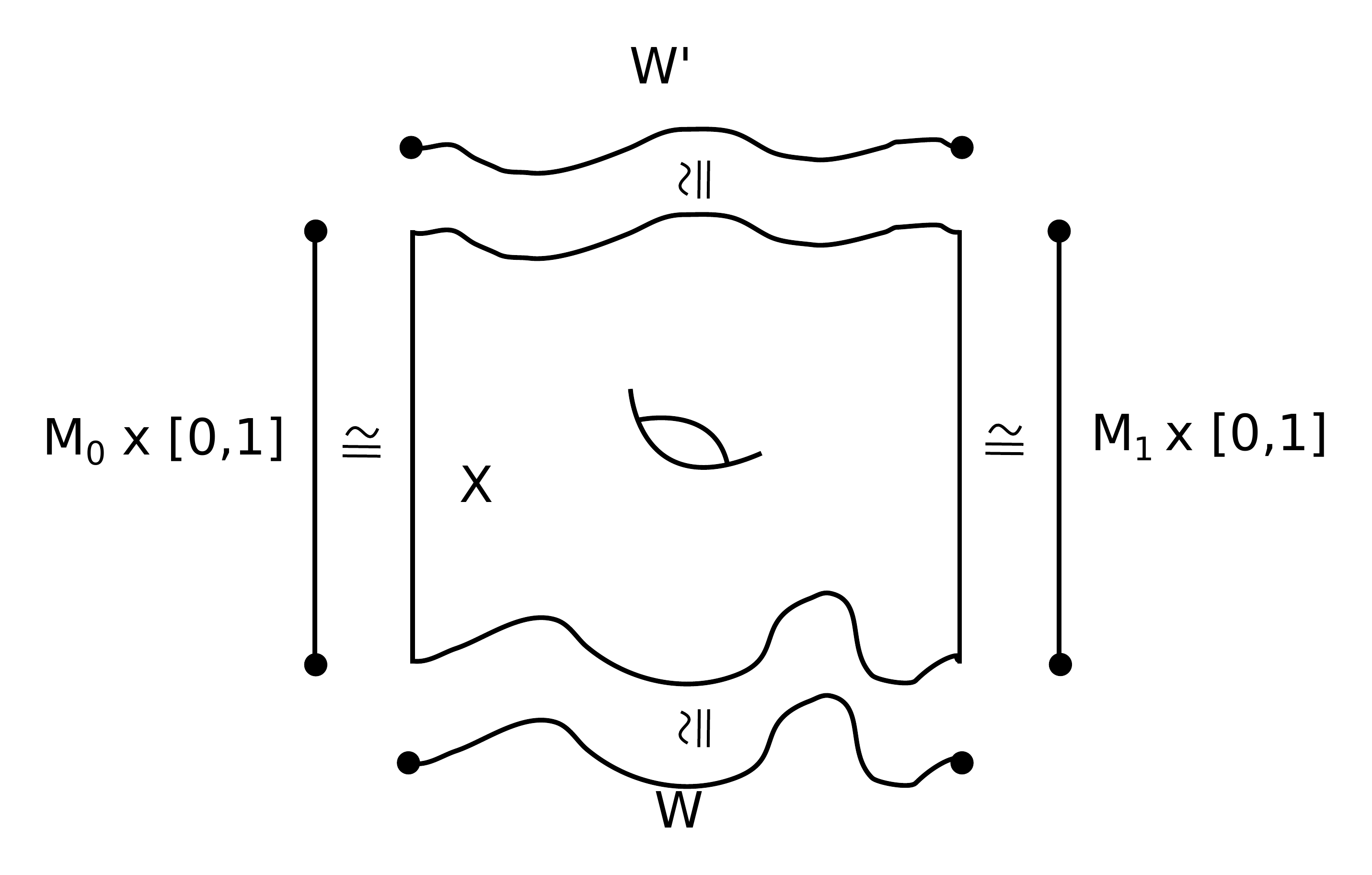}
		\caption{The relation in the category $\Omega_{d,2}$.}\label{fig:bordism-set-relation}
		\end{figure}
		\end{itemize}
		\item[]  Composition is given by gluing cobordisms along the common boundary:
			\[(W',\psi_0',\psi_1',\hat \ell', h')\circ (W,\psi_0,\psi_1,\hat \ell, h) = (W\cup_{(\psi_0')^{-1}\circ\psi_1} W', \psi_0,\psi_1',\hat\ell\cup_{(\psi_0')^{-1}\circ\psi_1}\hat\ell',h'\circ h).\]

	\end{itemize}
\end{definition} 

\begin{thm}\label{thm:main}
	Let $d\ge7$. There is a functor 
	\[\calS\colon \Omega_{d,2}\too \hotop\]
	with the following properties:
	\begin{enumerate}
		\item On objects, $\calS$ is given by $\calS(M,B,\theta,\hat l)=\calR^+(M)$.
		\item If $\alpha\in\Omega_{d,2}\bigl((M_0,B_0,\theta_0,\hat l_0),(M_1,B_1,\theta_1,\hat l_1)\bigr)$ is represented by a cobordism whose underlying manifold is given by $(M_0\times[0,1], \id, f^{-1})$ for a diffeomorphism $f\colon M_1\to M_0$, then $\calS(\alpha) = f^*$.
		\item If $\alpha\in\Omega_{d,2}\bigl((M_0,B_0,\theta_0,\hat l_0),(M_1,B_1,\theta_1,\hat l_1)\bigr)$ is represented by a cobordism whose underlying manifold is given by the trace $(\tr(\varphi),\id,\id)$ of a surgery datum $\varphi\colon S^{k-1}\times D^{d-k}\embeds M_0$ with $d-k\ge3$, then $\calS(\alpha) = \overline\calS_\varphi$ (cf. \pref{Definition}{def:surgeryequivalence}).
	\end{enumerate}
	Furthermore, $\calS$ is uniquely determined by these properties, up to natural isomorphism.
\end{thm}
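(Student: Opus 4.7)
The plan is to bootstrap from \pref{Corollary}{cor:S-is-well-defined}, which already gives a functor $\overline\calS\colon \bord_d^{-1,2}\to\hotop$ on the (untwisted) admissible cobordism category. The tangential structure condition on the boundary objects of $\Omega_{d,2}$, combined with the fact that $B$ is of type $F_2$ (since $\theta$ is $2$-coconnected), ensures via \pref{Lemma}{lem:connectivity-of-bordisms} that every morphism in $\Omega_{d,2}$ admits an \emph{admissible representative}: a $\theta$-cobordism $W'$ whose two boundary inclusions are both $2$-connected. The condition $n=2\le d/2-1$ needed for that lemma is satisfied because $d\ge 7$. For such a representative the forgetful class $[W']\in\mor{\bord_d^{-1,2}}{M_0,M_1}$ is defined, and I would declare $\calS([W,\psi_0,\psi_1,\hat\ell,h])\coloneqq \overline\calS_{W'}$.

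First I would verify well-definedness. Suppose $W_1',W_2'$ are two admissible representatives of the same class in $\Omega_{d,2}$. Then by the equivalence relation they cobound a $(d+1)$-dimensional $\theta$-manifold-with-corners $X$ relative to $M_0\times I\cup M_1\times I$. Viewing $X$ as a cobordism from $W_1'$ to $W_2'$ (rel boundary), I would apply \pref{Lemma}{lem:connectivity-of-bordisms} one dimension up to modify $X$ inside its $\theta$-cobordism class so that the inclusions $W_i'\embeds X$ become sufficiently connected, and then handle-decompose $X$. Using the index shift for interior handles, each handle of $X$ of index $k$ corresponds to an interior surgery on $W_1'$ of index $k-1$ in dimension $d$, and the connectivity control arranges these indices to lie in the range $[3,d-3]$ covered by \pref{Lemma}{lem:surgeryinvariance}. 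Applying that lemma inductively gives $\overline\calS_{W_1'}\simeq \overline\calS_{W_2'}$.

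Next I would check functoriality. Given composable morphisms $\alpha,\beta$ with admissible representatives $W_\alpha,W_\beta$, their concatenation $W_\alpha\cup_{M_1}W_\beta$ represents $\beta\circ\alpha$ and is automatically admissible: since $M_1\embeds W_\beta$ is $2$-connected, so is $M_1\embeds W_\alpha\cup W_\beta$, hence via the mapping cone sequence the inclusion $M_2\embeds W_\alpha\cup W_\beta$ is $2$-connected, and symmetrically for $M_0$. Functoriality of $\calS$ then follows from functoriality of $\overline\calS$. Properties (1), (2), (3) are immediate: for (2), the cobordism $(M_0\times I,\id,f^{-1})$ is already admissible and $\overline\calS$ sends it to $f^*$ by \pref{Corollary}{cor:S-is-well-defined}(2); for (3), the trace $(\tr(\varphi),\id,\id)$ with $d-k\ge 3$ is admissible with $\overline\calS_{\tr(\varphi)}\simeq \overline\calS_\varphi$ by \pref{Corollary}{cor:S-is-well-defined}(3).

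For uniqueness, any morphism in $\Omega_{d,2}$ admits an admissible representative which by \pref{Remark}{rem:wall} carries an admissible handle decomposition, hence can be factored as a composition of cobordisms of the forms appearing in (2) and (3). Any functor satisfying (1)-(3) therefore has its values on every morphism determined by the composition formula, giving uniqueness up to natural isomorphism. The \textbf{main obstacle} is the well-definedness step: translating the abstract equivalence relation in $\Omega_{d,2}$ (existence of a bounding $\theta$-manifold $X$ with corners) into a finite sequence of interior surgeries on $W_1'$ yielding $W_2'$ whose indices lie in the admissible range $[3,d-3]$. The difficulty is that $X$ has corners and its incoming/outgoing pieces $W_i'$ are only highly connected relative to their boundaries $M_0\amalg M_1$, so obtaining the index restriction requires a careful Morse--Cerf style rearrangement analogous to \pref{Theorem}{thm:space-of-gmf-connected}, applied in dimension $d+1\ge 8$.
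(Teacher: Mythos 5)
Your overall architecture is exactly the paper's: define $\calS$ on a morphism by choosing an admissible representative via \pref{Lemma}{lem:connectivity-of-bordisms}, set $\calS \coloneqq \overline\calS$ of that representative, and reduce well-definedness to \pref{Lemma}{lem:surgeryinvariance} by connecting two admissible representatives through a relative $\theta$-cobordism $\widetilde X$ one dimension up, made suitably connected by the same lemma. Functoriality and uniqueness are handled as you describe.

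There is, however, a genuine gap at the step you yourself flag as the main obstacle, and it is not merely a Morse--Cerf bookkeeping issue. Making $(\widetilde X, V_i')$ $2$-connected for $i=0,1$ (which is all \pref{Lemma}{lem:connectivity-of-bordisms} can give you, since $B$ is only the tangential $2$-type and the maps $V_i'\to B$ are only $2$-connected) yields handles of $\widetilde X^{d+1}$ of index $\lambda$ with $3\le\lambda\le (d+1)-3=d-2$; a $\lambda$-handle changes the $d$-dimensional level set by surgery along $S^{\lambda-1}\times D^{d-\lambda+1}$, i.e.\ a $\lambda$-surgery in the convention of \pref{Lemma}{lem:surgeryinvariance}. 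So the achievable range is $[3,d-2]$, not $[3,d-3]$ as you claim, and no amount of rearrangement closes this: forcing all indices $\le d-3$ would require $(\widetilde X,V_1')$ to be $3$-connected, which is not available. The index-$(d-2)$ surgeries therefore fall outside the hypotheses of \pref{Lemma}{lem:surgeryinvariance}. The paper's fix is an elementary reversal trick your proposal is missing: order the surgeries by increasing index, dispose of those with index in $[3,d-3]$ directly, and observe that the remaining passage is by $(d-2)$-surgeries only; reversing a $(d-2)$-surgery on a $d$-manifold is a $(d-(d-2)+1)=3$-surgery, which lies in the admissible range, and since the conclusion $\overline\calS_W\sim\overline\calS_{W_\Phi}$ of \pref{Lemma}{lem:surgeryinvariance} is symmetric in $W$ and $W_\Phi$, this completes the argument. (A minor separate point: your index shift ``a handle of index $k$ gives a surgery of index $k-1$'' does not match the paper's convention; with either convention one endpoint of your claimed range is wrong.)
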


\begin{proof}
	Let $V\coloneqq(V,\psi_0,\psi_1,\ell_V)\colon (M_0, \hat h\circ \hat l_0)\leadsto (M_1,\hat l_1)$ be a $\theta_1$-cobordism. By \pref{Lemma}{lem:connectivity-of-bordisms}, there exists an admissible $\theta_1$-cobordism $V'\colon M_0\leadsto M_1$ in the same cobordism class. We define $\calS_V\coloneqq\overline\calS_{V'}$. By definition of $\overline\calS$ it is clear that this fulfils the desired properties and is compatible with composition. It remains to show that this is well-defined. Let $X\colon V_0\leadsto V_1$ be a $\theta_1$-cobordism relative to $\partial V_0 = \partial V_1$ and let $X_i\colon V_i\leadsto V_i'$ be relative $\theta_1$-cobordisms such that $(V_i',M_1)$ is admissible for $i=0,1$. We get a relative $\theta_1$-cobordism $\widetilde X\coloneqq X_0^{\op}\cup X\cup X_1\colon V_0' \leadsto V_0 \leadsto V_1\leadsto V_1'$. Again, by \pref{Lemma}{lem:connectivity-of-bordisms}, we may assume that $(\widetilde X,V_i')$ is $2$-connected. So, $V_1'$ is obtained from $V_0'$ by a sequence of surgeries of index $k\in\{3,\dots,d-2\}$. One can order these surgeries, so that one first performs the $3$-surgeries, the $4$-surgeries next and so on up to the $d-3$-surgeries. By \pref{Lemma}{lem:surgeryinvariance} all of these do not change the homotopy class of $\overline\calS$ and we may assume that $V_1'$ is obtained from $V_0'$ by a finite sequence of $d-2$-surgeries. Reversing these surgeries we deduce that $V_0'$ is obtained from $V_1'$ by a finite sequence of $3$-surgeries and by \pref{Lemma}{lem:surgeryinvariance} the map $\overline\calS_{V_0'}$ is homotopic to $\overline\calS_{V_1'}$. Hence $\calS$ is well-defined. 
\end{proof}

\begin{remark}
Note that if $M_0$ and $M_1$ have the same tangential $2$-type, there exists an admissible cobordism $V'$ in the same cobordism class as $V$ such that $(V')^\op$ is admissible as well. Then $\calS_{(V')^\op}$ is an inverse for $\calS_V$
\end{remark}

\begin{remark}
As mentioned in \pref{Remark}{rem:walsh} (see also \cite{walsh_parametrized1}), Walsh constructed a psc-metric $G$ on an admissible self-cobordism $W\colon M\leadsto M$ extending a given psc-metric $g_0$ on the incoming boundary using the same construction used here. He showed that the homotopy class of $G$ restricted to the outgoing boundary does not depend on the handle presentation \cite[Theorem 1.3]{walsh_parametrized2}. Therefore he obtained a map $f_W\in\aut(\pi_0(\calR^+(M)))$ given by $[g_0]\mapsto [G|_{M\times\{1\}}]$. By separating the cobordism part of the picture (\pref{Section}{sec:handle-decomposition} to \pref{Section}{sec:presentation}) from the scalar curvature part of the picture (\pref{Section}{sec:S-is-well-defined} and \pref{Section}{sec:surgeryinvariance}) we upgraded this to give an actual homotopy class of a map $\calS_W\in\pi_0(\haut(\calR^+(M)))$ inducing Walsh's map on $\pi_0(\calR^+(M))$. The second improvement lies in the cobordism-invariance of $\calS$ which drastically enlarges its kernel and enables us to define $\calS_W$ for \emph{any} $\theta$-cobordism $W$.
\end{remark}

\noindent Before we start deriving the general version of \pref{Theorem}{thm:a}, let us list two interesting facts about the surgery map. The first one is proven by an argument similar to the reduction step in the proof of \pref{Lemma}{lem:S-is-well-defined} and uses the notion of left-/right-stable metrics (cf. \cite{erw_psc2}). 

Let $M_0$ be a manifold and let $M_0^{(2)}$ consist of all $0$-, $1$- and $2$-handles of $M_0$. We write $Q_0\coloneqq M_0\setminus M_0^{(2)}$ and $N\coloneqq\partial(M_0\setminus M_0^{(2)})$. We get a decomposition of $M_0$ into two cobordisms $\emptyset\overset{M_0^{(2)}}\leadsto N\overset{Q_0}\leadsto \emptyset$. A metric $g\in \calR^+(M_0^{(2)})_h$ is called \emph{right-stable} if for every cobordism $V\colon N\leadsto N'$ the map $\mu(g,\_)\colon\calR^+(V)_{h,h'}\to \calR^+(M_0^{(2)}\cup V)_{h'}$ which glues in $g$ is a weak homotopy equivalence. Analogously  a metric $g\in \calR^+(Q_0)_h$ is called \emph{left-stable} if for every cobordism $V\colon N'\leadsto N$ the map $\mu(\_,g)\colon\calR^+(V)_{h',h}\to \calR^+(V\cup Q_0)_{h'}$ which glues in $g$ is a weak homotopy equivalence.

\begin{prop}\label{prop:single-metric}
	Let $M_0$ be such that there exists a metric $g=g_{\rst}\cup g_{\lst}\in\calR^+(M_0)$ which is the union of a right-stable metric $g_{\rst}\in \calR^+(M_0^{(2)})_h$ and a left-stable metric $g_{\lst}\in\calR^+(Q_0)_h$. Let $W = (W,\id,\id), W'=(W',\id,\id)\colon M_0\leadsto M_1$ be an admissible cobordisms with $\calS_W(g) \sim \calS_{W'}(g)$. Then $\calS_W$ is homotopic to $\calS_{W'}$.
\end{prop}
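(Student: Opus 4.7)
The plan is to follow the reduction strategy of \pref{Lemma}{lem:S-is-well-defined}: use the two stability hypotheses as weak homotopy equivalences which compress the comparison of $\calS_W$ and $\calS_{W'}$ on all of $\calR^+(M_0)$ down to the single-point comparison furnished by the hypothesis.

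First I would rearrange the handles of both cobordisms to lie in $Q_0$. Admissibility gives handles of index $k\le d-3$, so the attaching $(k-1)$-spheres have dimension at most $d-4$; since $M_0^{(2)}$ deformation retracts onto its $2$-dimensional handle spine, general position allows every attaching sphere (and its framed normal bundle) to be isotoped off $M_0^{(2)}$ into $Q_0$. By \pref{Proposition}{prop:handle-decomposition-difference-ab} this alters neither $\calS_W$ nor $\calS_{W'}$ up to homotopy, so we may assume $W \cong (M_0^{(2)} \times I) \cup_{N \times I} \widetilde W$ and $W' \cong (M_0^{(2)} \times I) \cup_{N \times I} \widetilde{W'}$ for cobordisms $\widetilde W,\widetilde{W'}\colon Q_0 \leadsto Q_1$ rel $N \times I$, with $M_1 = M_0^{(2)} \cup_N Q_1$.

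Since the Walsh surgeries defining $\calS_W$ are now carried out entirely inside $Q_0$, on metrics of the form $g_\rst \cup g'$ we have $\calS_W(g_\rst \cup g') = g_\rst \cup \calS_{\widetilde W}(g')$, where $\calS_{\widetilde W}\colon\calR^+(Q_0)_h \to \calR^+(Q_1)_h$ is the analogous surgery map for the rel-boundary cobordism (the construction of \pref{Section}{sec:S-is-well-defined} goes through in the bounded case with the same proofs). Right-stability of $g_\rst$, applied to the cobordisms $Q_0,Q_1\colon N\leadsto\emptyset$, provides weak homotopy equivalences $\mu(g_\rst,\_)\colon\calR^+(Q_i)_h\to\calR^+(M_i)$ for $i=0,1$. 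These fit into a homotopy-commutative square which reduces the statement $\calS_W \sim \calS_{W'}$ to $\calS_{\widetilde W} \sim \calS_{\widetilde W'}$, and converts the hypothesis $\calS_W(g)\sim\calS_{W'}(g)$ into the equality $\calS_{\widetilde W}(g_\lst)\sim \calS_{\widetilde W'}(g_\lst)$ in $\calR^+(Q_1)_h$.

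The main obstacle is to deduce the homotopy $\calS_{\widetilde W} \sim \calS_{\widetilde W'}$ from agreement at the single point $g_\lst$, and this is where left-stability of $g_\lst$ enters symmetrically. For every $(d-2)$-cobordism $V\colon N'\leadsto N$, the gluing map $\mu(\_,g_\lst)\colon\calR^+(V)_{h',h}\to\calR^+(V\cup Q_0)_{h'}$ is a weak equivalence, which realises $\calR^+(Q_0)_h$ (and analogously $\calR^+(Q_1)_h$) up to weak equivalence as a space built from cobordisms of $N$ with prescribed boundary metrics. One then sets up a second homotopy-commutative square whose vertical maps are these left-stability weak equivalences and in which the top map is forced by the single-point hypothesis together with the naturality of $\calS_{\widetilde W}$ with respect to gluing. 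Executing this second compression rigorously — verifying in particular that the image metric $\calS_{\widetilde W}(g_\lst)$ retains enough stability on the $Q_1$-side to run the same diagram there — is the technical heart, and mirrors the iterated cascade of weak equivalences used in the reduction step of the proof of \pref{Lemma}{lem:S-is-well-defined}.
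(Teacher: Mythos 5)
Your overall route is the paper's: push all handles of $W$ and $W'$ into $Q_0$ by transversality (the attaching spheres have dimension at most $d-4$ and so miss the $2$-dimensional spine of $M_0^{(2)}$), split off $M_0^{(2)}\times I$ to get relative cobordisms $\widetilde W,\widetilde{W'}\colon Q_0\leadsto Q_1$, note that the surgery map commutes with gluing along $N$, and use right-stability of $g_\rst$ (applied to $Q_1\colon N\leadsto\emptyset$, which makes $\mu(g_\rst,\_)\colon\calR^+(Q_1)_h\to\calR^+(M_1)$ a weak equivalence, hence $\pi_0$-injective) to convert the hypothesis into $\calS_{\widetilde W}(g_\lst)\sim\calS_{\widetilde{W'}}(g_\lst)$ in $\calR^+(Q_1)_h$. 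This is exactly how the paper extracts $g_\lst^V\sim g_\lst^{V'}$, and up to that point your argument is sound.

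The gap is that you stop at the decisive step and look for its resolution in the wrong place. What closes the proof is not a second compression requiring any stability of the image metric $\calS_{\widetilde W}(g_\lst)$; it is the observation that left-stability of $g_\lst$ already factors the entire map $\calS_W$ through a single path component. Concretely: since all surgeries take place in the interior of $Q_0$, naturality under gluing gives $\calS_W\circ\mu(\_,g_\lst)\sim\mu(\_,g_\lst^V)$ as maps $\calR^+(M_0^{(2)})_h\to\calR^+(M_1)$, where $g_\lst^V$ is any representative of the component $\calS_{\widetilde W}(g_\lst)$; left-stability (applied to $M_0^{(2)}\colon\emptyset\leadsto N$) makes $\mu(\_,g_\lst)$ a weak equivalence, so $\calS_W\sim\mu(\_,g_\lst^V)\circ\mu(\_,g_\lst)^{-1}$. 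The homotopy class of $\mu(\_,g_\lst^V)$ depends only on the path component of $g_\lst^V$ in $\calR^+(Q_1)_h$, since a path of metrics with fixed boundary condition $h$ glues to a homotopy of gluing maps. Hence once right-stability has delivered $g_\lst^V\sim g_\lst^{V'}$, you conclude $\calS_W\sim\calS_{W'}$ immediately. In particular no property of $\calS_{\widetilde W}(g_\lst)$ beyond its path component is ever used, and the ``single point determines the map'' difficulty you flag as the technical heart dissolves: after the factorization, the map $\calS_W$ was never more than a gluing map with that single path component as its only parameter.
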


\begin{proof}[Proof of \pref{Proposition}{prop:single-metric}]
	Since $W$ and $W'$ are admissible, they consist of handles glued along surgery data with codimension at least $3$. By transversality we may assume that all handles are attached in the interior of $Q_0$. Hence we can decompose $M_1$ into $M_0^{(2)}\cup Q_1$ and $W$ (resp $W'$) into $M_0^{(2)}\times[0,1]$ and a relative cobordism $V\colon Q_0\leadsto Q_1$ (resp. $V'$). Let $g_{\lst}^V$ and $g_{\lst}^{V'}$ represent the resulting path components of $\calS_V(g_{\lst})$ and $\calS_V'(g_{\lst})$. Since $g_\lst$ is left-stable $\mu(\_,g_\lst)$ is a weak equivalence and $\calS_W=\mu(\_,g_\lst)^{-1}\circ\mu(\_,g_\lst^V)$ and $\calS_{W'}=\mu(\_,g_\lst)^{-1}\circ\mu(\_,g_\lst^{V'})$. By assumption $g_{\rst}\cup g^{V}_{\lst}$ is homotopic to $g_{\rst}\cup g^{V'}_{\lst}$ and because $g_\rst$ is right-stable, $g^{V}_{\lst}$ is homotopic to $g^{V'}_{\lst}$. Therefore $\mu(\_,g^{V}_{\lst})\sim \mu(\_,g^{V'}_{\lst})$ and hence $\calS_W\sim \calS_{W'}$.
\end{proof}

\begin{remark}\label{rem:single-metric}
	This theorem applies for example if $M_0$ is the double $dM_0^{(2)}=M_0^{(2)} \cup (M_0^{(2)})^\op$ of $M_0^{(2)}$ and the metric $g$ is given by $g_{\rst}\cup g_\rst^\op$ which covers the case  $M_0=S^{d-1}$ and $g=g_\circ$.   
\end{remark}

\noindent The second fact states that the surgery map induces a well defined map on concordance classes of psc-metrics which will be used in forthcoming work \cite{own_hspace}. Let us first recall the notion of concordance of psc-metrics.

\begin{definition}
	Let $g_0,g_1\in\calR^+(M)$. We say $g_0$ and $g_1$ are \emph{concordant} if $\calR^+(M\times[0,1])_{g_0,g_1}\ne\emptyset$. This defines an equivalence relation and we denote the \emph{set of concordance classes} of $\calR^+(M)$ by $\tilde\pi_0(\calR^+(M))$.
\end{definition}

\begin{prop}
	$\calS$ induces a well defined map on concordance classes.
\end{prop}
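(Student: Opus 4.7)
The plan is to verify two things: (i) any continuous representative $\phi$ of the homotopy class $\calS_W$ sends concordant metrics to concordant metrics, and (ii) homotopic representatives induce the same map on $\tilde\pi_0$. Part (ii) is immediate: a homotopy $\phi \sim \phi'$ provides a path $t\mapsto h_t$ from $\phi(g)$ to $\phi'(g)$ in $\calR^+(M_1)$, and any such path yields a concordance via the psc-metric $h_t + c\,dt^2$ on $M_1\times[0,1]$ for $c>0$ sufficiently large.

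For part (i), by \pref{Lemma}{lem:connectivity-of-bordisms} I may replace $W$ by an admissible representative and work with $\overline\calS_W$ from \pref{Corollary}{cor:S-exists}. Given a concordance $\Gamma\in\calR^+(M_0\times I)_{g_0, g_1}$, I would invoke \pref{Remark}{rem:walsh} to produce a psc-extension $G_1\in\calR^+(W)_{g_1, h_1}$ with $[h_1] = \overline\calS_W([g_1])$ in $\pi_0(\calR^+(M_1))$. Gluing $\Gamma$ and $G_1$ along the common boundary component carrying the metric $g_1$, and absorbing the resulting collar via the canonical diffeomorphism $(M_0\times I) \cup W \cong W$, yields a psc-metric in $\calR^+(W)_{g_0, h_1}$. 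By the same remark applied to $g_0$ there is also an extension $G_0\in\calR^+(W)_{g_0, h_0}$ with $[h_0] = \overline\calS_W([g_0])$, so both $h_0$ and $h_1$ arise as outgoing restrictions of psc-metrics on $W$ with the same incoming boundary $g_0$.

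The main obstacle is the concluding claim: for any $g\in\calR^+(M_0)$, any two psc-extensions $G, G'$ of $W$ with $G|_{M_0} = G'|_{M_0} = g$ have concordant restrictions $G|_{M_1}$ and $G'|_{M_1}$. To prove this I would construct a psc-metric on the $(d{+}1)$-dimensional manifold-with-corners $W\times[0,1]$ restricting to $G$ on $W\times\{0\}$, $G'$ on $W\times\{1\}$, and a standard product metric $g + c\,dt^2$ on $M_0\times[0,1]$; its restriction to $M_1\times[0,1]$ is then the required concordance. This is a relative/parametric form of Walsh's extension result (\pref{Remark}{rem:walsh}), provable by applying the parametric surgery theorem of Chernysh (\pref{Theorem}{thm:chernysh}) along the handle decomposition of $W$ in the spirit of \cite{walsh_parametrized2}, where handles are now attached to the admissible cobordism $W\times[0,1]$ with three of its faces carrying prescribed psc-metrics. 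Once the claim is in hand, $[h_0]$ and $[h_1]$ coincide in $\tilde\pi_0(\calR^+(M_1))$, which together with (ii) establishes that $\calS$ descends to a well-defined functor on concordance classes.
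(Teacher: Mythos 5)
Your overall strategy is the paper's: reduce to an admissible representative, use Walsh's extension (\pref{Remark}{rem:walsh}) to realise the surgered metrics as outgoing restrictions of psc-metrics on $W$, glue in the concordance, and then compare. Where you diverge is in how you close what you rightly flag as the ``main obstacle''. You propose to prove that two psc-extensions of $W$ with the same incoming restriction have concordant outgoing restrictions by building a psc-metric on the manifold-with-corners $W\times[0,1]$ with three faces prescribed, via a new relative/parametric form of Walsh's theorem. The paper gets the same conclusion with tools it has already established: the metric $H'^{\op}\cup G\cup H$ you would place on the three faces is exactly a psc-metric on the double $W^{\op}\cup (M_0\times[0,1])\cup W$ restricting to $h'$ and $h$ on its two boundary components, and \pref{Proposition}{prop:double-is-nullbordant} says this double is $\theta_1$-cobordant \emph{rel boundary} to the cylinder $M_1\times[0,1]$ --- the cobordism being precisely your $W\times[0,1]$ with corners introduced. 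The ordinary surgery theorem for cobordisms rel boundary then transports the psc-metric to $M_1\times[0,1]$, producing the concordance $\tilde H\in\calR^+(M_1\times[0,1])_{h',h}$ directly. So your picture is correct, but no parametric extension theorem on a cornered $4$-faced manifold is needed; reading $W\times[0,1]$ as a cobordism rel boundary from the double to the cylinder turns your sketched ``obstacle'' into a two-line application of \pref{Proposition}{prop:double-is-nullbordant}. Your part (ii) (isotopy implies concordance, so homotopic representatives agree on $\tilde\pi_0$) matches the paper's corresponding remark.
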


\begin{proof}
	Let $M_0, M_1$ be as in \pref{Theorem}{thm:main}, $g,g'\in\calR^+(M_0)$ be concordant metrics via $G\in\calR^+(M_0\times[0,1])_{g,g'}$ and let $[W]\in\Omega_d^{\theta_1}(M_0,M_1)$. Without loss of generality we may assume that $W$ is admissible. The map $\calS_W$ induces a map on components and since isotopy of psc-metrics implies concordance of psc-metrics, there are unique concordance classes $[\calS_W[g]]$ and $[\calS_W[g']]$ represented by $h$ and $h'$ respectively. It remains to show that $h$ and $h'$ are concordant. By \cite[Theorem 3.1]{walsh_parametrized1} (cf. \pref{Remark}{rem:walsh}) there exist metrics $H\in\calR^+(W)_{g,h}$ and $H'\in\calR^+(W)_{g',h'}$. This gives the psc-metric $H'^{\op}\cup G\cup H\in \calR^+(W^{op}\cup M_0\times[0,1]\cup W)_{h',h}$. By \pref{Proposition}{prop:double-is-nullbordant}, $W^{\op}\cup M_0\times[0,1]\cup W$ is $\theta_1$-cobordant to $M_1\times[0,1]$ relative to the boundary and by the surgery theorem there exists a psc-metric $\tilde H\in\calR^+(M_1\times[0,1])_{h',h}$.
\end{proof}

\begin{remark}\label{rem:concordance}
	Let $W\colon M_0\leadsto M_1$ be an admissible cobordism. A similar argument shows that on concordance classes we have 
	\[[\calS_W(g)] = [h] \iff \exists G\in\calR^+(W)_{g,h}\]
\end{remark}

\subsection{The Structured Mapping Class Group}\label{sec:mapping-class-groups}

	In this section we will give the definitions and present two models for the structured mapping class group of a manifold. For the next two sections let $\theta\colon B\to B\ort(d+1)$ be a fixed tangential structure.
	\begin{definition}
		For a smooth manifold $M^{d-1}$ we denote by $\diff(M)$ the \emph{topological group of diffeomorphisms of $M$} with the (weak) $C^\infty$-topology. If $M$ is oriented we denote \emph{the subgroup of orientation preserving diffeomorphisms of $M$} by $\diff^+(M)$. The \emph{(unoriented) mapping class group} $\Gamma(M)$ is defined to be $\pi_0(\diff(M))$ and the \emph{oriented mapping class group} $\Gamma^+(M)$ is defined as $\pi_0(\diff^+(M))$.
	\end{definition}
	
	\begin{definition}
		Let $M^{d-1}$ be a smooth oriented manifold. We define
		\begin{align*}
			E\diff^\theta(M)&\coloneqq E\diff(M)\times\bun(TM\oplus \underline{\bbR}^2,\theta^*U_{d+1})\\
			B\diff^\theta(M)&\coloneqq E\diff^\theta(M)/\diff(M) = E\diff(M)\underset{\diff(M)}{\times}\bun(TM\oplus \underline{\bbR}^2,\theta^*U_{d+1}),
		\end{align*}
		where we use the model $E\diff(M)\coloneqq\{j\colon M\hookrightarrow \bbR^{\infty-1}\}$ which is the (contractible) space of embeddings and $\bun(\_,\_)$ denotes the space of bundle maps. More concretely, 
		\[B\diff^\theta(M)= \{(N, \hat l)\colon N\subset\bbR^{\infty-1},\ N\cong M\text{ and } \hat l\in \bun(TN\oplus \underline{\bbR}^2,\theta^*U_{d+1})\}.\]
		Given an embedding $j\colon M\embeds \bbR^{\infty-1}$ and a (stabilized) $\theta$-structure $\hat l$ on $M$, we get a base-point $(j(M),\hat l)\in B\diff^\theta(M)$. We also define the \emph{universal $M$-bundle with $\theta$-structure $U_{M,\theta}$} by
		\[U_{M,\theta}\coloneqq E\diff^\theta(M)\underset{\diff(M)}\times M \too B\diff^\theta(M).\]
	\end{definition}
	
\begin{remark}
	For $\theta_{B\SO}\colon B\SO({d+1})\to B\ort({d+1})$ we abbreviate $B\diff^{\theta_{B\SO}}(M)$ by $B\diff^+(M)$. Note that with our convention $E\diff^+(M)$ is not contractible but homotopy equivalent to $\bun(TM\oplus\underline\bbR^2, \theta_{B\SO}^*U_{d+1})$ which has two contractible components provided that $M$ is connected (cf. \cite[p. 6]{ownthesis}). However, if $M$ is connected and admits an orientation reversing diffeomorphism, $B\diff^+(M)$ is still a model for the classifying space of $\diff^+(M)$-principal bundles.
\end{remark}
			
	\begin{definition}[Structured Mapping Class Group]\label{def:structured-mapping-class-group}
		Let $M$ be a smooth submanifold of $\bbR^{\infty-1}$ and let $\hat l$ be a stabilized $\theta$-structure on $M$. The \emph{$\theta$-structured mapping class group} $\Gamma^\theta(M, \hat l)$ is defined by
		\[\Gamma^{\theta}(M, \hat l):=\pi_1(B\diff^\theta(M),(M, \hat l)).\] 
		For $\gamma\colon S^1\to B\diff^\theta(M)$ we define the \emph{structured mapping torus} $M_\gamma:=\gamma^*U_{M,\theta}$. 
	\end{definition}
	
	\begin{remark}
		The mapping torus $M_\gamma$ has a $\theta$-structure on the vertical tangent bundle. Since the tangent bundle of the circle is trivial, this gives a $\theta$-structure on $M_\gamma$.
	\end{remark}

	\noindent Since the case of $B=B\Spin({d+1})$ is of great interest in the present paper we will have a closer look at it. Let us recall the more traditional description of $\Spin$-structures (cf. \cite[Chapter 3]{ebert_characteristic}): \emph{A $\Spin$-structure $\sigma$ on a manifold $M$} is a pair $(P,\alpha)$ consisting of a $\Spin(d+1)$-principal bundle $P$ and an isomorphism $\alpha\colon P\times_{\Spin({d+1})} \bbR^{d+1}\congarrow TM\oplus\underline\bbR^2$. An \emph{isomorphism of $\Spin$-structures} $\sigma_0=(P_0,\alpha_0)$ and $\sigma_1=(P_1,\alpha_1)$ is an isomorphism $\beta\colon P_0\congarrow P_1$ of $\Spin({d+1})$-principal bundles over $\id_M$ such that $\alpha_1\circ(\beta\times_{\Spin({d+1})}\id_{\bbR^{d+1}})=\alpha_0$. If $f\colon M\to M$ is an orientation preserving diffeomorphism and $\sigma=(P,\alpha)$ is a $\Spin$-structure on $M$, we define $f^*\sigma\coloneqq(f^*P, (df)^{-1}\circ f^*\alpha)$.

	Now, let $\sigma_0, \sigma_1$ be two $\Spin$-structures of $M$. A $\Spin$-diffeomorphsim $(M,\sigma_0)\congarrow(M,\sigma_1)$ is a pair $(f,\hat f)$ consisting of an orientation preserving diffeomorphism $f\colon M\congarrow M$ and an isomorphism $\hat f$ of $\Spin$-structures $\sigma_0$ and $f^*\sigma_1$ (cf. \cite[Definition 3.3.3]{ebert_characteristic}). We denote by $\diff^\Spin((M,\sigma_0),(M,\sigma_1))$ the space of $\Spin$-diffeomorphisms $(M,\sigma_0)\congarrow(M,\sigma_1)$. This gives rise to the groupoid $\diff^\Spin(M)$ which has $\Spin$-structures on $M$ as objects and morphisms spaces are given by $\diff^\Spin((M,\sigma_0),(M,\sigma_1))$. For a $\Spin$-structure $\sigma$ on $M$, we define
	\[\diff^\Spin(M,\sigma)\coloneqq\diff^\Spin((M,\sigma)(M,\sigma)).\]	
	
	\begin{prop}\label{prop:spin-mcg}
		Let $M$ be a simply connected $\Spin$-manifold. Then the forgetful homomorphism $\diff^\Spin(M,\sigma) \to \diff^+(M)$ is surjective and its kernel has two elements.
	\end{prop}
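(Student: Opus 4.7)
The plan is to handle surjectivity and the kernel computation separately, using in each case that a simply connected base kills the relevant obstruction/automorphism group.

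For surjectivity, I would use the standard fact that isomorphism classes of $\Spin$-structures on a fixed stably oriented bundle form a torsor over $H^1(M;\bbZ/2)$. Since $M$ is simply connected we have $H^1(M;\bbZ/2)=\Hom(\pi_1(M),\bbZ/2)=0$, so $\sigma$ is the unique $\Spin$-structure up to isomorphism. Given an orientation-preserving diffeomorphism $f\colon M\to M$, the pulled-back structure $f^*\sigma$ is therefore isomorphic to $\sigma$; choosing any such isomorphism $\hat f\colon\sigma\to f^*\sigma$ produces a $\Spin$-diffeomorphism $(f,\hat f)$ lifting $f$.

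For the kernel, an element lying over $\id_M\in\diff^+(M)$ is a pair $(\id,\hat f)$, where $\hat f$ is an automorphism of $\sigma=(P,\alpha)$ as a $\Spin$-structure. Unpacking the definition, $\hat f\colon P\congarrow P$ is a $\Spin(d+1)$-equivariant bundle automorphism over $\id_M$ whose associated map on $P\times_{\Spin(d+1)}\bbR^{d+1}$ is the identity; equivalently, $\hat f$ is a gauge transformation of $P$ whose image in the gauge group of the oriented frame bundle $P_{\SO}$ is trivial. Such gauge transformations correspond to sections of the associated bundle $P\times_{\Spin(d+1)}\ker(\Spin(d+1)\to\SO(d+1))$. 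Since the kernel is the central subgroup $\{\pm1\}\cong\bbZ/2$, this associated bundle is the trivial $\bbZ/2$-bundle over $M$, and as $M$ is connected its sections form $\bbZ/2$. Hence the kernel has exactly two elements.

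The only subtle point is to ensure the surjectivity argument is not just set-theoretic but actually yields a $\Spin$-diffeomorphism in the sense of the paper's \pref{Definition}{def:structured-mapping-class-group}; this is immediate once one observes that any isomorphism of $\Spin$-structures over $\id_M$ can be precomposed with $f^{-1}$ to yield the required $\hat f$. No further technical obstacle should arise, since both steps reduce to standard facts about the classification of $\Spin$-structures and the identification of the deck group of the double cover $\Spin(d+1)\to\SO(d+1)$.
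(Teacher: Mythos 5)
Your argument is correct and matches the paper's: surjectivity follows exactly as in the paper from the uniqueness (up to isomorphism) of the $\Spin$-structure on a simply connected manifold, and your kernel computation via sections of the trivial $\bbZ/2$-bundle $P\times_{\Spin(d+1)}\{\pm1\}$ over connected $M$ is the standard argument that the paper outsources to \cite[Lemma 3.3.6]{ebert_characteristic}. No gaps.
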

	\begin{proof}
		Since $M$ is simply connected, the $\Spin$-structure $\sigma$ of an oriented manifold is unique up to isomorphism. So for every orientation preserving diffeomorphism $f\colon M\congarrow M$, there is an isomorphism $\sigma\congarrow f^*\sigma$, hence the map is surjective. The rest follows from \cite[Lemma 3.3.6]{ebert_characteristic}.
	\end{proof}

	\noindent If $\theta$ is an arbitrary tangential structure we also have a different model for $\Gamma^\theta(M,\hat l)$.
	\begin{definition}
		For a $\theta$-structure $\hat l$ on $M^{d-1}$ we define 
		\[B^\theta(M,\hat l)\coloneqq\left\{(f,L)\colon \begin{array}{l}f\colon M\congarrow M\text{ is a diffeomorphism}\\L\text{ is a homotopy of bundle maps } \hat l\circ df\leadsto \hat l \end{array}\right\}\Big/\sim\]
		where the equivalence relation is given by homotopies of $f$ and $L$. 
	\end{definition}
%
%
%
%
	\begin{prop}[{\cite[Proposition 1.2.11]{ownthesis}}]\label{prop:models-for-mcg}
		There is a group isomorphisms $\Gamma^\theta(M,\hat l)\congarrow B^\theta(M,\hat l)$.
	\end{prop}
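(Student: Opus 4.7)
The strategy is to exploit the interpretation of $B\diff^\theta(M)$ as the classifying space for smooth $M$-bundles equipped with a stabilized $\theta$-structure on the vertical tangent bundle. Under this interpretation a pointed loop $\gamma \colon (S^1,*) \to (B\diff^\theta(M),(M,\hat l))$ is the same as a smooth $M$-bundle $\pi\colon E\to S^1$ with a stabilized $\theta$-structure $\hat L$ on the vertical tangent bundle and a chosen identification $(\pi^{-1}(*),\hat L|_{\pi^{-1}(*)}) \cong (M,\hat l)$. The plan is to match such data with the mapping-torus description given by $B^\theta(M,\hat l)$.

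\emph{Map $\Psi\colon B^\theta(M,\hat l)\to \Gamma^\theta(M,\hat l)$.} Given $(f,L)$, form the mapping torus $T_f = M\times [0,1]/(x,0)\sim (f(x),1)$ with its projection to $S^1$. The vertical tangent bundle of $T_f\to S^1$, pulled back to $M\times [0,1]$, is canonically $\mathrm{pr}_M^*TM$, so a stabilized $\theta$-structure on it amounts to a path $\hat m_t \colon TM\oplus\underline\bbR^2\to\theta^*U_{d+1}$. Setting $\hat m_t := L_{1-t}$ (using $L_0 = \hat l\circ df$, $L_1=\hat l$) yields a path with $\hat m_0 = \hat l$ and $\hat m_1 = \hat l\circ df$; the latter equality is exactly the compatibility required for the identification $(x,0)\sim(f(x),1)$ to preserve the $\theta$-structure. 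Classifying the resulting bundle gives a loop $\Psi(f,L) \in \Gamma^\theta(M,\hat l)$.

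\emph{Map $\Phi\colon \Gamma^\theta(M,\hat l)\to B^\theta(M,\hat l)$.} Given a loop $\gamma$, lift it through the fibration $E\diff^\theta(M)\to B\diff^\theta(M)$ to a path $(j_t,\hat m_t)$ with $(j_0,\hat m_0)=(\mathrm{inclusion},\hat l)$; this is possible because $E\diff(M)$ is contractible and fibrations have the path-lifting property. The endpoint condition forces $(j_1,\hat m_1) = (\mathrm{inclusion}\circ f^{-1},\hat l\circ df)$ for a unique $f\in\diff(M)$, and the path $\hat m_t$ provides a homotopy $\hat l\leadsto \hat l\circ df$, equivalently a homotopy $L\colon \hat l\circ df\leadsto \hat l$ (run backwards). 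This defines $\Phi(\gamma)=[(f,L)]$.

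\emph{Well-definedness, inverses, and the group law.} Two choices of path-lift differ by an element of $\diff(M)$ acting on the endpoint, which translates into an isotopy of $f$ and a corresponding homotopy of $L$; homotopic loops produce a cylinder of such data, which similarly descends to an equivalence of pairs. Conversely, equivalent pairs $(f,L)\sim(f',L')$ give isomorphic mapping tori with homotopic $\theta$-structures, hence homotopic loops $\Psi(f,L)\simeq \Psi(f',L')$. That $\Phi\circ \Psi=\mathrm{id}$ and $\Psi\circ \Phi=\mathrm{id}$ is immediate from the mapping-torus/trivialization unpacking. Composition of loops $\gamma_1\cdot\gamma_2$ corresponds on the mapping-torus side to gluing $T_{f_2}$ and $T_{f_1}$ end-to-end, which is diffeomorphic to $T_{f_1\circ f_2}$ with the concatenated $\theta$-structure; this matches the group law on $B^\theta(M,\hat l)$ given by $(f_1,L_1)\cdot(f_2,L_2)=(f_1\circ f_2,\, L_2*(L_1\circ df_2))$.

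\emph{Main obstacle.} The only nontrivial point is the careful bookkeeping of the $\diff(M)$-action on $\bun(TM\oplus\underline\bbR^2,\theta^*U_{d+1})$ — specifically, that precomposition with $df^{-1}$ is the correct action so that the mapping-torus $\theta$-structure constructed from $L$ glues consistently at $\{0\}\sim\{1\}$. Once this sign is fixed, the rest is a formal unpacking of the Borel-construction model for $B\diff^\theta(M)$; this is also the reason the argument is written up only for $\theta$-structures on the stabilized tangent bundle (where the existence and contractibility of suitable spaces of embeddings and bundle-map lifts are standard).
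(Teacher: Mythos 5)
The paper itself gives no proof of this proposition — it is quoted from the author's thesis — so there is nothing in the text to compare against; your proposal is the standard unpacking of $\pi_1$ of the Borel construction $E\diff(M)\times_{\diff(M)}\bun(TM\oplus \underline{\bbR}^2,\theta^*U_{d+1})$ via path lifting in the principal $\diff(M)$-bundle $E\diff^\theta(M)\to B\diff^\theta(M)$, which is certainly the intended route, and the overall structure (mapping torus in one direction, path lifting in the other, matching of group laws) is correct. Two small corrections to the bookkeeping you yourself flag as the main obstacle: with the paper's convention $T_f=M\times[0,1]/(x,0)\sim(f(x),1)$, a path of bundle maps $\hat m_t$ descends to the mapping torus iff $\hat m_0=\hat m_1\circ df$, so you should take $\hat m_t=L_t$ (running from $\hat l\circ df$ to $\hat l$) rather than $\hat m_t=L_{1-t}$, whose gluing condition would read $\hat l=\hat l\circ df\circ df$; and path lifting is available because $E\diff^\theta(M)\to B\diff^\theta(M)$ is a principal $\diff(M)$-bundle and hence a fibration, not because $E\diff(M)$ is contractible — contractibility is what you need for the homotopy equivalence $E\diff^\theta(M)\simeq\bun(TM\oplus\underline{\bbR}^2,\theta^*U_{d+1})$ and for the uniqueness of the endpoint analysis.
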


%

	\begin{example}
Since we usually will be interested in the case where $\theta$ is the (stabilized) tangential $2$-type of a high-dimensional manifold $M$, let us consider at the case $B=B\Spin({d+1})\times BG$. The map $\theta\colon B\Spin({d+1})\times BG\to B\ort({d+1})$ factors through the $3$-connected cover $\theta_\Spin\colon B\Spin({d+1})\to B\ort({d+1})$ and we get
\[\bun(TM\oplus\underline\bbR^2,\theta^*U_{d+1}) = \bun(TM\oplus\underline\bbR^2,\theta_\Spin^*U_{d+1})\times \map(M,BG).\]
So, a $\theta$-structure $\hat l$ on $M$ is given by a $\Spin$-structure $\sigma$ on $M$ and a map $M\to BG$. Let $\psi\coloneqq[f,L]\in B^\theta(M,\hat l)$. Then $f$ is an orientation preserving diffeomorphism of $M$ and $L$ is the homotopy class of a path connecting the bundle maps $\hat l_\Spin,\hat l_\Spin\circ df\colon TM\oplus\underline\bbR^2 \to \theta_\Spin^*U_{d+1}$ together with the homotopy class of a path connecting the maps $\alpha$ and $\alpha\circ f\colon M\to BG$. If $G=\pi_1(M,x)$ for some base-point $x\in M$, this means that the induced map $f_*\colon\pi_1(M,x)\to\pi_1(M,f(x))$ is given by conjugation by a path $\gamma\colon [0,1]\to M$ with $\gamma(0)=x$ and $\gamma(1)=f(x)$. We say that \emph{$f$ acts on the fundamental group by an inner automorphism} in this case.
\end{example}

\subsection{Cobordism sets}\label{sec:cobordism-groups}
	Before we can derive the general version of \pref{Theorem}{thm:a} we need to take a closer look at the morphism sets of $\Omega_{d,2}$. Recall that we fixed a tangential structure $\theta\colon B\to B\ort(d+1)$.

	\begin{definition}\label{def:structured-bordism-set}		
		Let $M_0^{d-1},M_1^{d-1}$ be compact manifolds with (stabilized) $\theta$-structures $\hat l_0,\hat l_1$. We define the \emph{cobordism set of manifolds with $\theta$-structure and fixed boundary}  by 
		\[\Omega^\theta_d\bigl((M_0,\hat l_0),(M_1,\hat l_1)\bigr):=\bigr\{(W,\psi_0,\psi_1,\hat\ell)\bigl\}/\sim.\]
		Here, $W$ is a $d$-manifold with boundary $\partial W =\partial_0W\ \coprod \partial_1W$, $\hat\ell\in \bun(TW\oplus\underline\bbR,\theta^*U_{d+1})$ is a bundle map and $\psi_i\colon \partial_iW\to M_i$, $i=0,1$ are diffeomorphisms such that $(-1)^i\hat l_i\circ \mathrm{d}\psi_i = \hat\ell|_{\partial_iW}$. We call $M_0$ the \emph{incoming boundary} and $M_1$ the \emph{outgoing boundary}  (see \pref{Figure}{fig:bordism-set} in \pref{Definition}{def:omega}). The equivalence relation is given by the cobordism relation: We say that $(W,\psi_0,\psi_1,\ell)$ and $(W',\psi_0',\psi_1',\ell')$ are cobordant if there exists a $(d+1)$-dimensional $\theta$-manifold $(X,\ell_X)$ with corners such that there exists a partition of $\partial X=\bigcup_{i=0,3} \partial_iX$ together with diffeomorphisms 
		\begin{align*}
			M_0\times I&\cong \partial_0 X &M_1\times I&\cong \partial_3X\\
			W &\cong \partial_2X& W'&\cong\partial_1X
		\end{align*}  
		such that $\theta$-structures and identifying diffeomorphisms fit together (see \pref{Figure}{fig:bordism-set-relation} in \pref{Definition}{def:omega}).
	\end{definition}

	\begin{remark}
		\begin{enumerate}
			\item Since $\theta$ is a fibration we do not need to consider homotopies $(-1)^i\hat l_i\circ \mathrm{df} \sim \hat\ell|_{\partial_iW}$ but we can arrange the $\theta$-structures to actually agree.
			\item $\Omega_d^\theta((M,\hat l),(M,\hat l))$ becomes a monoid via concatenation of cobordisms and identifying them along the boundary diffeomorphisms, \ie
				\[(W',\psi_0',\psi_1',\ell') \cdot (W,\psi_0,\psi_1,\ell) := (W\cup_{\psi_0'\circ \psi_1^{-1}} W', \psi_0,\psi_1', \ell\cup_{\hat \psi'_0\circ\hat \psi_1^{-1}}\ell').\]
				This monoid is actually an abelian group (cf. \pref{Corollary}{cor:isomorphism-of-bordism-groups}). 			\item Note that one has a map $\Omega_d^\theta((M,\hat l),(M,\hat l))\to \Omega_d^\theta(\emptyset,\emptyset)\eqqcolon \Omega_d^\theta$ given by identifying the \emph{equal} boundaries of a cobordism $W\colon M\leadsto M$. This map turns out to be an isomorphism of groups (cf. \pref{Corollary}{cor:isomorphism-of-bordism-groups} and the remark below it).
		\end{enumerate}
	\end{remark}
	
	\begin{prop}\label{prop:double-is-nullbordant}
		Let $W^d\colon M_0 \leadsto M_1$ be $\theta$-cobordism. Then there exists a $\theta$-structure on $W^\op\colon M_1\leadsto M_0$ such that $W\cup W^\op\sim M_0\times [0,1]\ \mathrm{rel}\ M_0\times\{0,1\}$. In particular, if $W\colon \emptyset\leadsto M$ is a $\theta$-nullbordism, the double $dW\coloneqq W\cup W^\op$ is nullbordant and $W^\op\amalg W$ is cobordant to the cylinder on $M$.
	\end{prop}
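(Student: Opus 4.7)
The key idea is to use $X = W \times [0,1]$, viewed as a $(d{+}1)$-dimensional $\theta$-manifold with corners, as the cobordism witnessing the equivalence $W \cup W^\op \sim M_0 \times I$ relative to the boundary.

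First, I would work out the corner structure of $X = W \times I$. The corners lie along $\partial W \times \{0,1\} = (M_0 \amalg M_1) \times \{0,1\}$. I smooth the two corners along $M_1$ (i.e., $M_1 \times \{0\}$ and $M_1 \times \{1\}$) so that $W \times \{0\} \cup M_1 \times I \cup W \times \{1\}$ becomes a single smooth codimension-$0$ face of $X$ diffeomorphic to the double $W \cup_{M_1} W^\op$; the remaining codimension-$0$ face is $M_0 \times I$, the trivial cobordism $M_0 \leadsto M_0$. To match the four-piece boundary partition of \pref{Definition}{def:structured-bordism-set}, I would absorb small collar neighbourhoods of $M_0 \times \{0\}$ and $M_0 \times \{1\}$ into the two side cylinders $\partial_0 X \cong M_0 \times I$ and $\partial_2 X \cong M_0 \times I$, leaving $\partial_1 X \cong W \cup W^\op$ and $\partial_3 X \cong M_0 \times I$.

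Next, I would equip $X$ with a $\theta$-structure. Using the canonical isomorphism $T(W \times I) \oplus \underline{\bbR} \cong TW \oplus \underline{\bbR}^2$ (combining the $I$-direction with the stabilizing line), the given bundle map $\hat\ell$ on $W$ pulls back along the projection $X \to W$ to a $\theta$-structure $\hat\ell_X$ on $X$. By construction $\hat\ell_X$ restricts to the canonical cylinder $\theta$-structure on $\partial_3 X = M_0 \times I$, while on $\partial_1 X = W \cup_{M_1} W^\op$ it restricts to $\hat\ell$ on one copy of $W$ and to the sign-flipped $\theta$-structure (obtained by negating one stabilizing $\bbR$-factor, mirroring the $-\hat l_1$ convention in \pref{Definition}{def:omega}) on the other copy. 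This sign-flipped structure is precisely the $\theta$-structure on $W^\op$ for which the two boundary restrictions at $M_1$ agree, so the double $W \cup W^\op$ acquires a well-defined $\theta$-structure and the bordism $X$ is indeed a morphism in $\Omega_{d,2}$.

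The two ``in particular'' statements follow by specialization. Setting $M_0 = \emptyset$ reduces $M_0 \times I$ to $\emptyset$, yielding directly that $dW = W \cup W^\op$ is $\theta$-null-bordant. Applying the main assertion to $V \coloneqq W^\op \colon M \leadsto \emptyset$ in place of $W$, the double $V \cup_\emptyset V^\op$ is the disjoint union $W^\op \amalg W$, which is therefore $\theta$-cobordant to $M \times I$.

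The main technical obstacle will be the bookkeeping in the second step: verifying that the $\theta$-structure induced on the ``doubled'' face of $X$ really coincides with the natural $\theta$-structure on $W \cup_{M_1} W^\op$, including the correct sign conventions for $W^\op$ and smooth compatibility across the cylinder $M_1 \times I$ that gets absorbed when smoothing the $M_1$-corners. Once this compatibility is verified, the rest of the construction is the standard mapping-cylinder argument.
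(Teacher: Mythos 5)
Your proposal is correct and follows essentially the same route as the paper: the paper also takes $W\times I$, introduces corners so that one codimension-$0$ face becomes $W\cup W^{\op}$ and the opposite one $M_0\times I$, and obtains the $\theta$-structure by extending $\ell_W$ along the homotopy equivalence $W\hookrightarrow W\times I$ (identifying the stabilizing line with the interval direction), which is the same as your pullback along the projection; the $\theta$-structure on $W^{\op}$ is then defined by restriction. The specializations to the two ``in particular'' statements are also handled as you describe.
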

	\begin{proof}
		Consider the manifold with corners $W\times I$. We introduce new corners as in \pref{Figure}{fig:corners}.
		\begin{figure}[ht]
		\centering
		\includegraphics[width=0.8\textwidth]{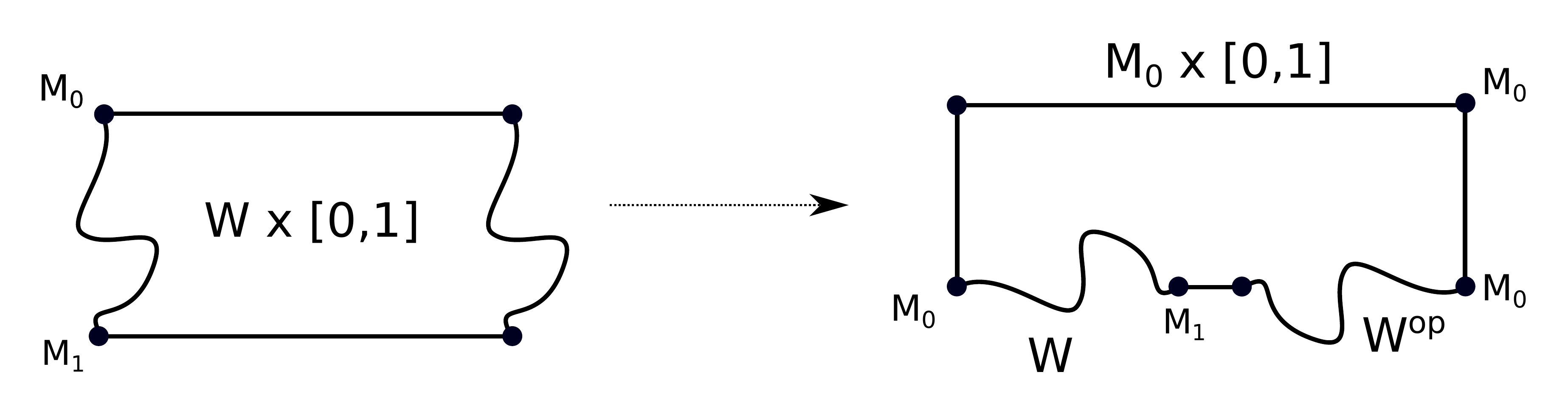}	
		\caption{Introducing corners to obtain the desired cobordism}\label{fig:corners}
		\end{figure}
		Next, we construct the $\theta$-structures\footnote{This is adapted from \cite[Proof of Proposition 2.16]{grw_stable}.}. Let $\ell_W\colon TW\oplus\underline\bbR\to\theta^*U_{d+1}$ be the $\theta$-structure on $W$. Since $W\embeds W\times[0,1]$ is a homotopy equivalence there is a unique extension up to homotopy
		\begin{center}
		\begin{tikzpicture}
			\node (0) at (0,1) {$TW\oplus\underline\bbR$};
			\node (1) at (4,1) {$\theta^*U_{d+1}$};		
			\node (2) at (0,0) {$T(W\times [0,1])$};
			
			\draw[->] (0) to node[above]{$\ell_W$} (1);
			\draw[->] (0) to (2);
			\draw[->, dotted] (2) to (1); 
		\end{tikzpicture}
		\end{center}
		where the vertical map sends $v\in\underline\bbR_{>0}$ to the inwards pointing vector. This gives a $\theta$-structure on $W\times I$ and by restriction a $\theta$-structure on $W^{\op}$.
	\end{proof}
	
	\noindent Now we can prove another useful tool.
			
	\begin{prop}\label{prop:action-of-bordism}
		The action of $\Omega_d^\theta$ on $\Omega_d^\theta\bigl((M_0,\hat l_0),(M_1,\hat l_1)\bigr)$ given by disjoint union is free and transitive.
	\end{prop}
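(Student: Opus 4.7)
The strategy is to verify transitivity and freeness separately, both reducing to the cancellation law provided by Proposition 4.4, namely that $W\cup W^{\op}\sim M_0\times[0,1]$ relative to the boundary. Throughout I assume $\Omega_d^\theta\bigl((M_0,\hat l_0),(M_1,\hat l_1)\bigr)\ne\emptyset$ (else there is nothing to prove), and fix representatives freely.

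\textbf{Transitivity.} Given two classes represented by $W_0,W_1\colon (M_0,\hat l_0)\leadsto (M_1,\hat l_1)$, I propose as witness the closed $\theta$-manifold
\[V\coloneqq W_1\cup_{M_0\amalg M_1}W_0^{\op},\]
where $W_0^{\op}$ carries the $\theta$-structure provided by Proposition 4.4. I will show $[W_0\amalg V]=[W_1]$. Applying Proposition 4.4 to $W_0$ gives $W_0\cup W_0^{\op}\sim M_0\times[0,1]$ rel boundary, so as cobordisms $M_0\leadsto M_1$,
\[W_1\;\sim\; (M_0\times[0,1])\cup W_1\;\sim\;(W_0\cup W_0^{\op})\cup W_1\;=\;W_0\cup (W_0^{\op}\cup W_1).\]
It remains to produce a $(d+1)$-dimensional cobordism with corners from $W_0\amalg V$ to the composite $W_0\cup (W_0^{\op}\cup W_1)$, relative to the fixed boundary identifications. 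The two $d$-manifolds differ only in how $W_0$ is attached to the piece $W_0^{\op}\cup W_1$: in the composite they are glued along the internal hypersurface $M_0$, while in $W_0\amalg V$ that hypersurface has been "folded over" to glue $W_0^{\op}$ to $W_1$ on both sides. The required bordism is built by starting from $(W_0\cup (W_0^{\op}\cup W_1))\times[0,1]$ and attaching, at a small disk of the interior hypersurface $M_0$, a $1$-handle (or, if preferred, doing an interior surgery of index $0$ in the $(d+1)$-manifold) that breaks the gluing and reconnects the two sides as in $V$. This move is local, compatible with the fixed cylindrical $\theta$-structure on $(M_0\amalg M_1)\times[0,1]$, and realises the desired cobordism.

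\textbf{Freeness.} Suppose $[W\amalg V]=[W\amalg V']$ in $\Omega_d^\theta\bigl((M_0,\hat l_0),(M_1,\hat l_1)\bigr)$, witnessed by a $(d+1)$-manifold with corners $X$ whose boundary is partitioned into $W\amalg V$, $W\amalg V'$, $M_0\times[0,1]$, and $M_1\times[0,1]$ with matching $\theta$-structures. Glue the cylinder $W^{\op}\times[0,1]$ to $X$ along its two boundary copies of $W$, matching $\partial W^{\op}\times[0,1]=(M_0\amalg M_1)\times[0,1]$ with the cylindrical pieces of $\partial X$. Proposition 4.4 furnishes the compatible $\theta$-structure on $W^{\op}\times[0,1]$, and after this gluing the $W$-parts of $\partial X$ are sealed off, while the cylindrical parts cancel. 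The resulting closed $(d+1)$-dimensional $\theta$-manifold has boundary exactly $V\amalg V'$, with $V'$ appearing with reversed structure, hence witnesses $[V]=[V']$ in $\Omega_d^\theta$.

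\textbf{Main obstacle.} The only nontrivial geometric input beyond Proposition 4.4 is the $1$-handle surgery in the transitivity step that converts the glued composite $W_0\cup(W_0^{\op}\cup W_1)$ into the disjoint union $W_0\amalg V$. This surgery is standard and local, but one must check carefully that it can be performed compatibly with the prescribed $\theta$-structures and with the rigid boundary identifications, which is where the choice of collars and the use of Proposition 4.4 to extend $\theta$-structures across the cylinder become essential.
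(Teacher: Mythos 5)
Your overall architecture is sound and close to the paper's: the paper fixes a morphism $L\colon (M_0,\hat l_0)\leadsto(M_1,\hat l_1)$ and exhibits mutually inverse bijections $V\mapsto V\amalg L$ and $W\mapsto W\cup L^{\op}$ (closed up), both directions resting on \pref{Proposition}{prop:double-is-nullbordant} exactly as you intend. Your freeness argument, capping off the witnessing $(d+1)$-manifold $X$ with $W^{\op}\times[0,1]$, is correct; it is essentially the paper's easy direction $\tilde\Phi\circ\Phi=\id$ carried out one level up, and the $\theta$-structure on $W^{\op}\times[0,1]$ is indeed supplied by \pref{Proposition}{prop:double-is-nullbordant}.

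The gap is in transitivity, at exactly the step you flag as the main obstacle. A minor slip first: the hypersurface along which the two configurations differ is $M_1$, not $M_0$ (in the composite, $W_0$ is glued to $W_0^{\op}$ along a copy of $M_1$ and the outgoing $M_1$ of $W_1$ is free; in $W_0\amalg V$ these roles are exchanged, while the internal copy of $M_0$ is glued identically in both). More seriously, the move converting $W_0\cup(W_0^{\op}\cup W_1)$ into $W_0\amalg V$ is not an ordinary $1$-handle attached along two small disks, nor an interior surgery of index $0$: attaching $D^1\times D^d$ along $S^0\times D^d$ performs a $0$-surgery on the $d$-manifold, replacing two disks by a tube $D^1\times S^{d-1}$, which is a connected-sum--type modification and leaves the gluing pattern along the copies of $M_1$ untouched (it also changes the diffeomorphism type in the wrong way). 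What is needed is a cut-and-paste along all of $M_1\amalg M_1$, i.e., attaching $M_1\times D^1\times D^1$ along $M_1\times S^0\times D^1$ --- a family of $1$-handles parametrized by $M_1$. Equivalently, and this is what the paper does, one decomposes both $d$-manifolds into the same four pieces $V_0,\dots,V_3$ (boundary collars, $L$, $L^{\op}$, $W$), forms $\coprod_i V_i\times[0,1]$ with corners, and identifies $\partial V_i\times\{t\}$ by one pairing for $t\in[0,\tfrac12]$ and by the other for $t\in[\tfrac12,1]$; the $\theta$-structures $\hat\ell_{V_i}\oplus\id_{\underline\bbR}$ then glue automatically, which is what resolves the compatibility issue you raise. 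With the local handle replaced by this fibered one, your transitivity argument goes through.
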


	\begin{proof}
		Since disjoint union is associative up to cobordism and disjoint union with the emptyset is the identity and this really defines a group action. If $\Omega_d^\theta\bigl((M_0,l_0),(M_1,l_1)\bigr)=\emptyset$ the statement is trivial. So let $(L,\psi_0^L,\psi_1^L,\ell_L)\colon (M_0,\hat l_0)\leadsto (M_1,\hat l_1)$ be a cobordism. Let $\Phi_L\colon\Omega_d^\theta\longrightarrow\Omega_d^\theta\bigr((M_0,l_0),(M_1,l_1)\bigl)$ be given by $\Phi_L(V) = V\amalg L$. Also let 
		\[\tilde \Phi_L\colon \Omega_d^\theta\bigr((M_0,l_0),(M_1,l_1)\bigl)\too \Omega_d^\theta\]
		be defined given by gluing in the cobordism $(L^{\op},\psi_1^L,\psi_0^L,\ell_L^{\op})$ along the boundary as follows: We concatenate with $L^\op$ and then identify the equal boundaries:
			\[\Omega_d^\theta\bigr((M_0,l_0),(M_1,l_1)\bigl) \overset{\cup L^\op}\too\Omega_d^\theta\bigr((M_0,l_0),(M_0,l_0)\bigl)\too\Omega_d^\theta\]
		We will prove the Proposition by showing that $\Phi$ and $\tilde \Phi$ are mutually inverse bijections. The easy part is 
		\[\tilde\Phi(\Phi([V])) = \tilde\Phi([V\amalg L]) = [V\amalg (L\cup L^{\op})] = [V]\]
		by \pref{Proposition}{prop:double-is-nullbordant}. It remains to show that $\tilde\Phi_L(W)\amalg L = (W\cup L^{\op}) \amalg L$ is cobordant to $W$. First we note that $(W,\psi_0,\psi_1)$ is diffeomorphic to $(M_0\times I\cup_{\psi_0}W\cup_{\psi_1^{-1}}M_1\times I,\id,\id)$ and so it suffices to consider the case that all boundary identifications are given by the identity. We now decompose $(W\cup L^{\op})\amalg L$ as follows:
		\begin{align*}
			V_0&\coloneqq M_0\times[0,\epsilon]\cup M_1\times[1-\epsilon,1] &V_1&\coloneqq L\\
			V_2&\coloneqq L^{\op} &V_3&\coloneqq W 
		\end{align*}
		Note that 
		\begin{align*}
			\partial V_0 &= (M_0\times\{0\})\amalg \underbrace{(M_0\times \{\epsilon\})\amalg (M_1\times\{1-\epsilon\})}_{\eqqcolon \partial_+V_0} \amalg (M_1\times \{1\})\\
			\partial V_1 &= M_0\amalg M_1 = \partial V_2 = \partial V_3 
		\end{align*}
		By identifying $\partial_+ V_0$ and $\partial V_2$ with $\partial V_1$ and $\partial V_3$ in different ways we obtain
		\begin{align*}
			V_0\cup V_1 & = L &V_2\cup V_3& = L^\op\cup W\\
			V_0\cup V_3 & = W &V_2\cup V_3& = L^\op\cup L=dL
		\end{align*}
		We will now construct the cobordism $X\colon(V_0\cup V_1)\amalg (V_2\cup V_3)\leadsto(V_0\cup V_3)\amalg (V_2\cup V_1)$. We construct this by taking $V_i\times I$ for every $i=0,1,2,3$, introducing corners at the boundary (and at $\partial_+V)$ respectively) as shown in \pref{Figure}{fig:introducing-corners}.
		\begin{figure}[ht]
		\centering
		\includegraphics[width=0.5\textwidth]{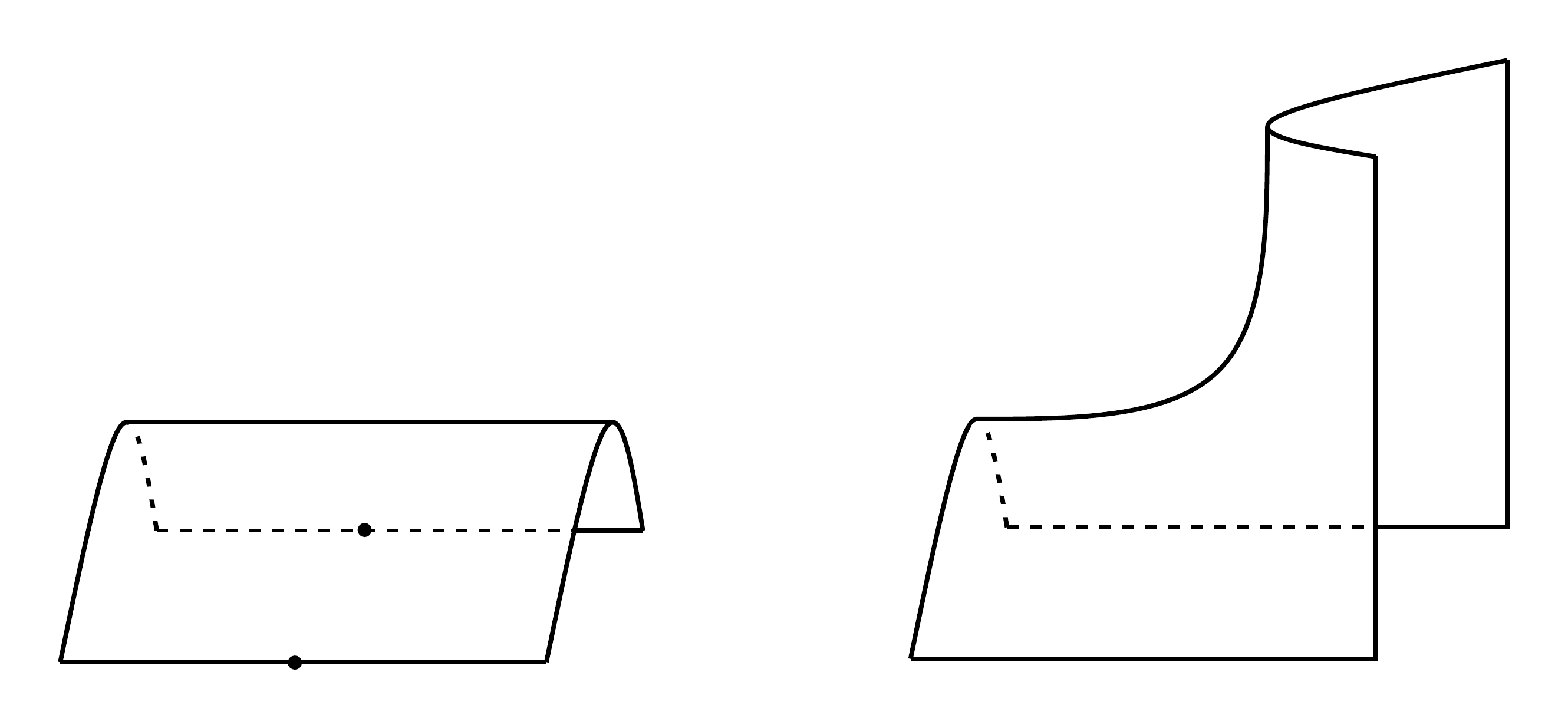}
		\caption{Introducing corners at the boundary of $V_i\times [0,1]$}\label{fig:introducing-corners}
		\end{figure}\\
		We then glue together these manifolds as follows: We identify 
		\begin{itemize}
			\item$\partial V_0\times\{t\} \text{ with } \partial V_1\times \{t\} \text{ and } \partial V_2\times\{t\} \text{ with } \partial V_3\times \{t\} \text{ for } t\in[0,\frac12]$
			\item$\partial V_0\times\{t\} \text{ with } \partial V_3\times \{t\} \text{ and } \partial V_1\times\{t\} \text{ with } \partial V_2\times \{t\} \text{ for } t\in[\frac12,1]$.
		\end{itemize}
		This is shown in \pref{Figure}{fig:cutting-and-pasting}. The $\theta$-structures are given by $\hat \ell_{V_i}\oplus\id_{\underline\bbR}$ (the arrows in \pref{Figure}{fig:cutting-and-pasting} indicate the incoming and outgoing boundary of $X$).
		\begin{figure}[ht]
		\centering
		\includegraphics[width=0.5\textwidth]{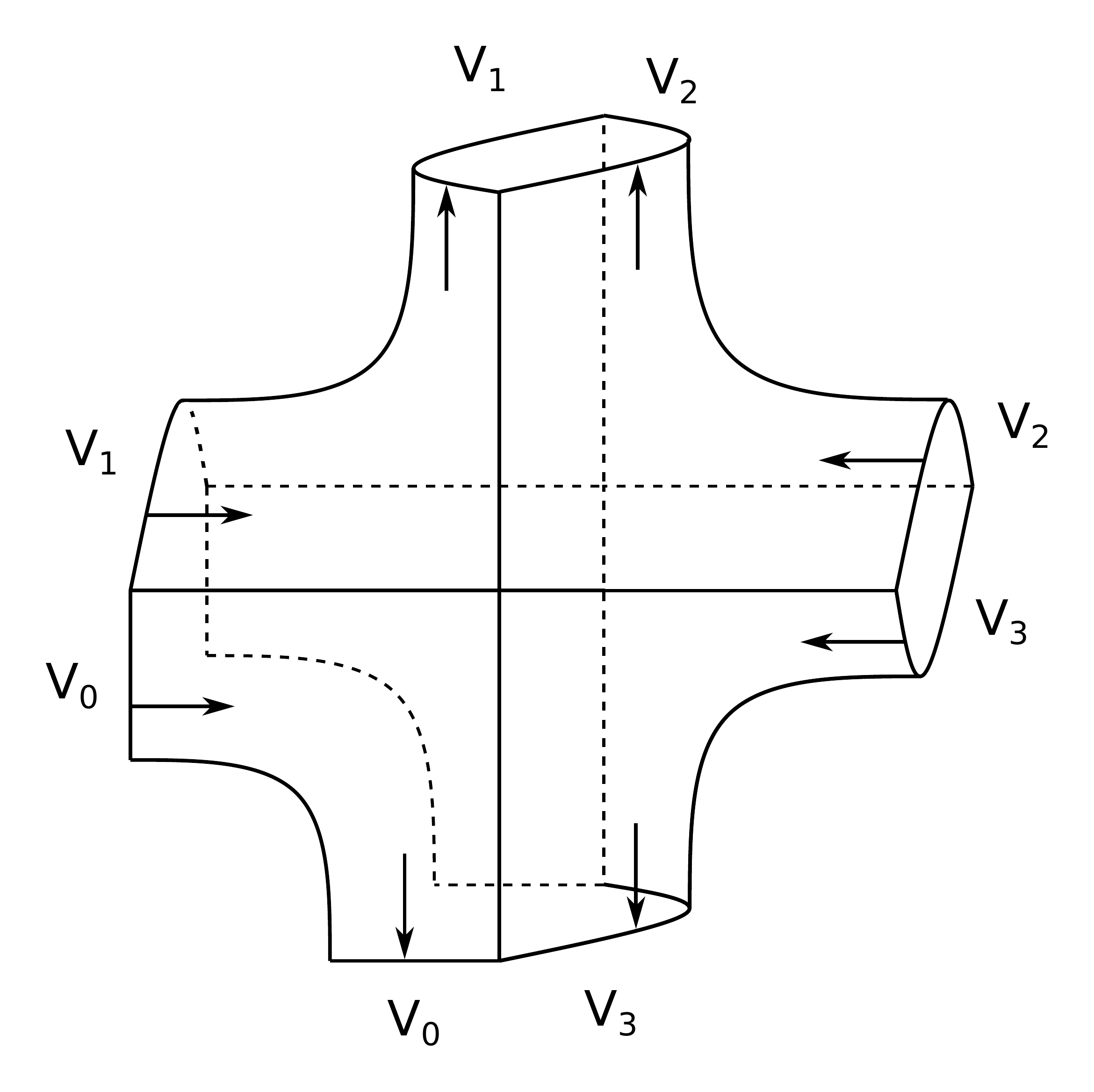}
		\caption{The cobordism $X\colon (V_0\cup V_1)\amalg (V_2\cup V_3)\leadsto (V_0\cup V_3)\amalg (V_2\cup V_1)$}\label{fig:cutting-and-pasting}
		\end{figure}
	\end{proof}
	\begin{remark}\label{rem:action-of-bordism}
		\pref{Proposition}{prop:action-of-bordism} can also be proven using structured cobordism categories. The presented proof however is much more direct.
	\end{remark}

	\begin{cor}\label{cor:isomorphism-of-bordism-groups}
		Let $(M,l)$ be a $(d-1)$-dimensional $\theta$-manifold. Then the map
		\[\Phi\colon\Omega^\theta_d\congarrow\Omega^{\theta}_d\bigr((M,\hat l),(M,\hat l)\bigr)\]
		given by $(V,\hat\ell)\mapsto (M\times[0,1]\amalg V,\id,\id ,(\hat l\oplus\id_{\underline\bbR})\amalg\hat\ell)$ is an isomorphism of groups. In particular, $\Omega^{\theta}_d\bigr((M,\hat l),(M,\hat l)\bigr)$ is an abelian group.
	\end{cor}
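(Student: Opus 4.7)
The plan is to identify $\Phi$ with the orbit map of the action described in \pref{Proposition}{prop:action-of-bordism} at the identity cobordism $(M\times[0,1],\id,\id,\hat l\oplus\id_{\underline\bbR})$, and then to check compatibility with the two group structures.

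First I would observe that $\Phi$ is clearly well-defined on cobordism classes, since disjoint union respects the cobordism relation. Next, note that by the very definition of the $\Omega_d^\theta$-action used in \pref{Proposition}{prop:action-of-bordism}, we have
\[\Phi(V,\hat\ell) \;=\; (V,\hat\ell)\cdot (M\times[0,1],\id,\id,\hat l\oplus\id_{\underline\bbR}),\]
i.e.\ $\Phi$ is the orbit map at the identity element of $\Omega_d^\theta\bigl((M,\hat l),(M,\hat l)\bigr)$. Since that action is free and transitive, $\Phi$ is a bijection of sets.

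The remaining task is to verify that $\Phi$ is a homomorphism from $(\Omega_d^\theta,\amalg)$ to $\bigl(\Omega_d^\theta((M,\hat l),(M,\hat l)),\cup\bigr)$. Given two classes $[V_1,\hat\ell_1]$ and $[V_2,\hat\ell_2]$ in $\Omega_d^\theta$, the composition of $\Phi(V_1)$ with $\Phi(V_2)$ is obtained by gluing the two copies of $M\times[0,1]$ along the common boundary identification $\id$. The resulting manifold is canonically diffeomorphic, relative to the boundary, to $M\times[0,2]\amalg V_1\amalg V_2$, which in turn is equivalent rel boundary to $M\times[0,1]\amalg (V_1\amalg V_2)$. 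The $\theta$-structures match because $\hat l\oplus\id_{\underline\bbR}$ extends uniquely (up to homotopy) across the cylinder, and the $\hat\ell_i$ are taken in disjoint copies. Hence
\[\Phi(V_1\amalg V_2,\hat\ell_1\amalg\hat\ell_2) \;=\; \Phi(V_1,\hat\ell_1)\cup\Phi(V_2,\hat\ell_2).\]

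Combined with the bijectivity, this proves $\Phi$ is an isomorphism of groups. The ``in particular'' statement then follows because $\Omega_d^\theta$, being a cobordism group under disjoint union, is abelian. The only mildly delicate step is checking that the composition of $\Phi(V_1)$ and $\Phi(V_2)$ really equals $\Phi(V_1\amalg V_2)$ rather than picking up an extra cylinder, but this is precisely the observation that concatenating cylinders yields a cylinder, both as manifolds with boundary and as $\theta$-manifolds.
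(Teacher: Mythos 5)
Your proposal is correct and follows essentially the same route as the paper: the bijectivity is deduced from the free and transitive disjoint-union action of \pref{Proposition}{prop:action-of-bordism} (your ``orbit map'' phrasing is just a slightly more explicit way of saying this), and the homomorphism property is verified by the same cylinder-concatenation computation $(M\times[0,1]\amalg V_1)\cup(M\times[0,1]\amalg V_2)\cong M\times[0,1]\amalg(V_1\amalg V_2)$.
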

	
	\begin{proof}
		It is a group homomorphism because
		\begin{align*}
			\Phi(V\amalg W) &= M\times [0,1]\ \amalg\  V\amalg W\\&=(M\times[0,1]\ \amalg\  V)\cup (M\times [1,2]\ \amalg\  W) = \Phi(V)\cup\Phi(W).
		\end{align*}
		The rest follows from \pref{Proposition}{prop:action-of-bordism}.
	\end{proof}
	
	\begin{remark}
		The inverse is given by mapping $(W,\psi_0,\psi_1)$ to the manifold obtained by gluing $\partial_1W$ to $\partial_0W$ along the diffeomorphism $\psi_0^{-1}\circ\psi_1$. 
	\end{remark}
	
	\begin{cor}\label{cor:mapping-torus-homomorphism}
		The map $\Gamma^\theta(M,\hat l)\to \Omega_d^{\theta}$ given by $[\gamma]\mapsto [M_\gamma]$ is a homomorphism.
	\end{cor}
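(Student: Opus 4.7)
The plan is to factor the mapping-torus map through the abelian group $\Omega_d^{\theta}\bigl((M,\hat l),(M,\hat l)\bigr)$ and invoke the group isomorphism $\Phi$ of \pref{Corollary}{cor:isomorphism-of-bordism-groups}. I would first represent $[\gamma]\in\Gamma^\theta(M,\hat l)=\pi_1(B\diff^\theta(M),(M,\hat l))$ by a path $\tilde\gamma\colon[0,1]\to B\diff^\theta(M)$ with $\tilde\gamma(0)=\tilde\gamma(1)=(M,\hat l)$ and set $W_{\tilde\gamma}\coloneqq \tilde\gamma^*U_{M,\theta}$. Since $U_{M,\theta}$ carries a canonical $\theta$-structure on its vertical tangent bundle and $T[0,1]$ is canonically trivialized, $W_{\tilde\gamma}$ becomes a $\theta$-cobordism $(M,\hat l)\leadsto (M,\hat l)$ with identity boundary identifications; the sign convention of \pref{Definition}{def:structured-bordism-set} forcing $-\hat l$ on the outgoing boundary is built into the choice of outward-pointing normal and is the only thing to verify here. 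Independence of the chosen path follows from pulling back $U_{M,\theta}$ along a homotopy rel endpoints $[0,1]^2\to B\diff^\theta(M)$, which yields a $(d+1)$-dimensional cobordism of cobordisms.

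Next I would observe that $\Phi^{-1}([W_{\tilde\gamma}])=[M_\gamma]$: by the description of $\Phi^{-1}$ in the Remark after \pref{Corollary}{cor:isomorphism-of-bordism-groups}, $\Phi^{-1}$ glues $\partial_1W_{\tilde\gamma}$ to $\partial_0W_{\tilde\gamma}$ via the identity, which is precisely the definition of the pullback bundle $\gamma^*U_{M,\theta}=M_\gamma$. To see that $[\gamma]\mapsto[W_{\tilde\gamma}]$ is itself a group homomorphism into $\Omega_d^\theta\bigl((M,\hat l),(M,\hat l)\bigr)$, note that the concatenation $\tilde\gamma_1*\tilde\gamma_2$ representing $[\gamma_1]\cdot[\gamma_2]$ pulls back to the concatenated bundle $W_{\tilde\gamma_1}\cup_{\id}W_{\tilde\gamma_2}$, which is by construction the composition of cobordisms. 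Since $\Phi^{-1}$ is an isomorphism of groups by \pref{Corollary}{cor:isomorphism-of-bordism-groups}, the composition $[\gamma]\mapsto [M_\gamma]$ is a homomorphism.

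I do not expect a serious obstacle here: once the universal bundle $U_{M,\theta}$ is in hand, all of the argument is essentially naturality of pullback together with the fact that concatenation of paths becomes concatenation of cobordisms. The only bookkeeping worth doing carefully is the sign of the $\theta$-structure at the two boundary components, which is a direct consequence of the conventions fixed in \pref{Definition}{def:structured-bordism-set}.
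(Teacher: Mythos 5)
Your proposal is correct and follows essentially the same route as the paper: the paper's proof also factors the mapping-torus map through the mapping-cylinder homomorphism $\gamma\mapsto(\gamma^*U_{M,\theta},\id,\id)\in\Omega_d^\theta\bigl((M,\hat l),(M,\hat l)\bigr)$, notes that concatenation of loops corresponds to composition of cobordisms, and then applies the boundary-gluing isomorphism from \pref{Corollary}{cor:isomorphism-of-bordism-groups} to land in $\Omega_d^\theta$. Your additional remarks on path-independence and the sign convention at the boundary are correct bookkeeping that the paper leaves implicit.
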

	
	\begin{proof}
		Let $\gamma\colon[0,1]\to B\diff^\theta(M)$ be a path from $(M,\hat l)$ to itself. We define the \emph{mapping cylinder map} by $A\colon\Gamma^\theta(M,\hat l)\to \Omega_d^{\theta}(M,M), \gamma\mapsto (\gamma^*U_{M,\theta},\id,\id)$. Since the bundle classified by $\gamma_0*\gamma_1$ is the same as the union of the bundles classified by $\gamma_i$, this satisfies 
		\begin{align*}
			A(\gamma_0*\gamma_1)& = ((\gamma_0*\gamma_1)^*U_{M,\theta},\id,\id)\\
				&= (\gamma_0^*U_{M,\theta} \cup \gamma_1^*U_{M,\theta},\id,\id)\\
				&= (\gamma_0^*U_{M,\theta},\id,\id)\cup(\gamma_1^*U_{M,\theta},\id,\id) = A(\gamma_0)\cup A(\gamma_1).
		\end{align*}
		Since the isomorphism $\Omega_d^{\theta}(M,M)\to \Omega_d^\theta$ is given by gluing the boundary, we have $M_\gamma = \tilde\Phi(\gamma^*U_{M,\theta})$ and hence
		\begin{align*}
		M_{\gamma_0*\gamma_1} &= \tilde\Phi(A(\gamma_0*\gamma_1))=\tilde\Phi(A(\gamma_0)\cup A(\gamma_1)) \\
			&\overset{\ref{cor:mapping-torus-homomorphism}}= \tilde\Phi(A(\gamma_0))\amalg\tilde\Phi(A(\gamma_1)) = M_{\gamma_0}\amalg M_{\gamma_1}. \qedhere
		\end{align*}
	\end{proof}
	
	\begin{remark}\label{rem:mapping-torus-homomorphism}
		Using the model $B^\theta(M,\hat l)$ for the mapping class group, we see that the map $A\colon B^\theta(M,\hat l)\to \Omega_d^{\theta}(M,M)$ is given by $\psi\mapsto (M\times[0,1],\id,\psi^{-1})$ for $P$ . Note that since $\Omega_d^\theta(M,M)$ is commutative, $\psi\mapsto (M\times[0,1],\id,\psi)$ is a homomorphism as well.
\end{remark}

\subsection{The action of the mapping class group}\label{chap:applications}

\noindent We will now give the general statement of \pref{Theorem}{thm:a}. For a space $X$ let $\haut(X)$ denote the group-like $H$-space of weak homotopy equivalences of $X$. 

\begin{cor}\label{cor:action-of-mcg}
	Let $d\ge7$, let $M$ be a $(d-1)$-dimensional manifold and let $\theta\colon B\to B\ort(d+1)$ be the stabilized tangential $2$-type of $M$ where $\hat l\colon TM\oplus\underline\bbR^2\to\theta^*U_{d+1}$ is a $\theta$-structure. Then there exists a group homomorphism	
		\[\se\colon\Omega_d^{\theta}\too \pi_0(\haut(\calR^+(M))),\]
	such that the following diagram, where $F$ is the forgetful map and $T$ is the mapping torus map, commutes
	\begin{center}
	\begin{tikzpicture}
		\node (0) at (0,1.5) {$\Gamma^\theta(M,\hat l)$};
		\node (1) at (0,0) {$\Gamma(M)$};
		\node (2) at (5,1.5) {$\Omega_d^\theta$};
		\node (3) at (5,0) {$\pi_0(\haut(\calR^+(M))).$};
		
		\draw[->] (0) to node[auto]{$F$} (1);
		\draw[->] (0) to node[auto]{$T$} (2);
		\draw[->] (1) to node[auto]{$\Theta$} (3);
		\draw[->] (2) to node[auto]{$\se$} (3);
	\end{tikzpicture}
	\end{center}
\end{cor}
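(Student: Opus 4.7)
The plan is to package the functor $\calS$ of \pref{Theorem}{thm:main} into a group homomorphism out of $\Omega_d^\theta$. I would define
\[\se\bigl([V, \hat\ell]\bigr) \coloneqq \calS\bigl([M \times I \amalg V,\ \id,\ \id,\ \hat l_I \amalg \hat\ell]\bigr),\]
where $\hat l_I$ denotes the canonical $\theta$-structure on $M \times I$ induced by $\hat l$. This uses the isomorphism $\Phi\colon \Omega_d^\theta \congarrow \Omega_d^\theta((M, \hat l), (M, \hat l))$ of \pref{Corollary}{cor:isomorphism-of-bordism-groups} implicitly. Since $\theta$ is the stabilized tangential $2$-type of $M$, the classifying map $l\colon M \to B$ is $2$-connected, hence $(M, B, \theta, \hat l)$ is an object of $\Omega_{d,2}$ and the displayed morphism lives in $\Omega_{d,2}$, so $\calS$ applies.

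Next I would verify that $\se(V)$ actually lies in $\pi_0(\haut(\calR^+(M)))$, i.e., is represented by a homotopy self-equivalence. Since both the incoming and outgoing boundaries of $M \times I \amalg V$ equal $(M, \hat l)$ with the same $2$-connected reference map, \pref{Lemma}{lem:connectivity-of-bordisms} lets me replace the cobordism by an admissible representative $W'$ for which $(W')^{\op}$ is also admissible. Then \pref{Corollary}{cor:S-exists} gives that $\calS_{W'}$ is a homotopy equivalence with inverse $\calS_{(W')^{\op}}$.

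For the homomorphism property, I would use that $\Phi$ carries disjoint union in $\Omega_d^\theta$ to composition of cobordisms in $\Omega_d^\theta((M,\hat l),(M,\hat l))$ (this is essentially the one-line computation in the proof of \pref{Corollary}{cor:isomorphism-of-bordism-groups}). Since $\calS$ is a functor, it turns composition in $\Omega_{d,2}$ into composition in $\hotop$, yielding $\se(V \amalg W) = \se(V) \circ \se(W)$.

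Finally, the commutativity of the square amounts to $\Theta \circ F = \se \circ T$. For $\psi \in \Gamma^\theta(M, \hat l)$ represented in the $B^\theta$-model, \pref{Remark}{rem:mapping-torus-homomorphism} identifies $T(\psi) = M_\psi$ with the class $[(M \times I, \id, \psi^{-1})]$ under $\Phi^{-1}$. Property (2) of \pref{Theorem}{thm:main} then evaluates $\calS$ on this class to $\psi^*$, which equals $\Theta(F(\psi))$ by definition of $\Theta$. The substance of the argument is already contained in \pref{Theorem}{thm:main}, whose well-definedness of $\calS$ is the main technical obstacle; once that and \pref{Corollary}{cor:isomorphism-of-bordism-groups} are in hand, the present corollary reduces to bookkeeping.
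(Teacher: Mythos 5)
Your proposal is correct and follows essentially the same route as the paper: the same definition $\se(V)=\calS_{M\times[0,1]\amalg V,\id,\id}$, the homomorphism property via \pref{Corollary}{cor:isomorphism-of-bordism-groups} and functoriality of $\calS$, and commutativity of the square via the identity $[M\times I\amalg T_\psi,\id,\id]=[M\times I,\id,\psi^{-1}]$ together with property (2) of \pref{Theorem}{thm:main}. Your extra verification that $\se(V)$ is a homotopy equivalence (via \pref{Lemma}{lem:connectivity-of-bordisms} and \pref{Corollary}{cor:S-exists}) is exactly the content of the remark following \pref{Theorem}{thm:main}, so nothing is missing or different in substance.
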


\begin{proof}
	We define $\se(W)\coloneqq \calS_{M\times[0,1]\amalg W,\id,\id}$. Then $\se$ is a group homomorphism by \pref{Corollary}{cor:isomorphism-of-bordism-groups} and because $\calS$ is compatible with composition. By \pref{Theorem}{thm:main} the above diagram is commutative since $[M\times I\ \amalg\  T_\psi,\id,\id] = [M\times I,\id,\psi^{-1}]$ (cf. \pref{Corollary}{cor:mapping-torus-homomorphism} and \pref{Remark}{rem:mapping-torus-homomorphism}).
\end{proof}

\begin{proof}[Proof of \pref{Theorem}{thm:actionofmcg}]
	Since $M$ ist simply connected and stably parallelizable, the tangential $2$-type of $M$ is given by $B\Spin(d+1)$. Let $f\colon M\to M$ be a $\Spin$-diffeomorphism. By \pref{Corollary}{cor:action-of-mcg} we need to show that the class $[T_f]\in\Omega_d^\Spin$ vanishes. The kernel of the forgetful homomorphism $\Omega^\Spin_d\to\Omega^{\SO}_d$ is a finite dimensional $\bbF_2$-vector space and concentrated in degrees congruent $\equiv1,2(8)$ (cf. \cite{andersonbrownpeterson_short}). By \cite[Proposition 13]{kreck_bordism_odd} mapping tori of stably parallelizable manifolds are orientedly nullbordant which finishes the proof.
\end{proof}

\noindent There are more examples to which \pref{Corollary}{cor:action-of-mcg} is applicable which can be found in \cite[Section 4.1]{ownthesis}. 

\begin{proof}[Proof of \pref{Proposition}{prop:spheres}]
	By \pref{Proposition}{prop:spin-mcg} we may assume that $f$ is a $\Spin$-diffeomorphism. Let $W\colon S^{d-1}\to S^{d-1}$ be an admissible cobordism $\Spin$-cobordant to $S^{d-1}\times[0,1]\amalg T_f$. Then $f^*\sim\se_{T_f}\sim\calS_W$ and by \pref{Proposition}{prop:single-metric} and \pref{Remark}{rem:single-metric} this is homotopic to the identity if $\calS_W(g_\circ)$ is homotopic to $g_\circ$.
\end{proof}

\subsection{The action for simply connected $\Spin$ $7$-manifolds}\label{sec:triviality-criterion}

\noindent We have the following result for $7$-manifolds which implies \pref{Corollary}{cor:a}.

\begin{cor}\label{cor:7-dimensional}
	Let $M^7$ be a simply connected $\Spin$-manifold and let $f\colon M\congarrow M$ be a $\Spin$-diffeomorphism. Then the following are equivalent:
	\begin{enumerate}		
		\item $\hat\scrA(T_f)=0$.
		\item $T_f$ is $\Spin$-nullbordant.
		\item $f^*$ is homotopic to the identity.
		\item $f^*g\sim g$ for every $g\in\calR^+(M)$.
		\item There exists a metric $g\in\calR^+(M)$ such that $f^*g\sim g$.
	\end{enumerate}
\end{cor}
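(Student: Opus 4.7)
The plan is to establish the cycle of implications $(1)\Leftrightarrow(2)\Rightarrow(3)\Rightarrow(4)\Rightarrow(5)\Rightarrow(1)$. The key inputs, all already available, are the isomorphism $\Omega_8^{\Spin}\cong\bbZ\oplus\bbZ$ given by $[W]\mapsto(\sign(W),\alpha(W))$ recalled in the introduction, the factorisation result \pref{Corollary}{cor:action-of-mcg}, Hitchin's identity $\inddiff(g,f^*g)=\alpha(T_f)$ (also recalled in the introduction), and the fact that every simply connected $\Spin$ $7$-manifold admits a psc metric, since the relevant obstruction lives in $KO_7(\pt)=0$ and Stolz's theorem applies. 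In particular $\calR^+(M)\ne\emptyset$, so the equivalence of (4) and (5) is not vacuous.

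For $(1)\Leftrightarrow(2)$ I would use two standard facts: first, the signature of a mapping torus vanishes (the signature is multiplicative under fibre bundles, and $\sign(S^1)=0$), hence under the cited isomorphism $[T_f]=0$ is equivalent to $\alpha(T_f)=0$; second, in dimension $8$ the $KO$-valued $\alpha$-invariant of a $\Spin$-manifold coincides with the integer-valued $\hat\scrA$-genus, so $\alpha(T_f)=0$ iff $\hat\scrA(T_f)=0$.

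For $(2)\Rightarrow(3)$: since $M$ is simply connected, the diffeomorphism $f$ lifts to a $\Spin$-diffeomorphism $(f,\hat f)$ by \pref{Proposition}{prop:spin-mcg}. The commutative diagram in \pref{Corollary}{cor:action-of-mcg} then gives $\Theta(f)=\se(T\,(f,\hat f))=\se([T_f])$, which is trivial as soon as $[T_f]=0\in\Omega_8^{\Spin}$, so $f^*$ is homotopic to the identity. The step $(3)\Rightarrow(4)$ is immediate, and $(4)\Rightarrow(5)$ is trivial given $\calR^+(M)\ne\emptyset$.

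Finally, $(5)\Rightarrow(1)$ is just Hitchin's formula: if some $g\in\calR^+(M)$ satisfies $f^*g\sim g$, then $\inddiff(g,f^*g)=0$, hence $\alpha(T_f)=0$, and the equality $\alpha=\hat\scrA$ in dimension $8$ closes the loop. No step is a real obstacle; the entire argument is an assembly of the already-established factorisation result with standard facts about $\Omega_8^{\Spin}$ and the $KO_7(\pt)=0$ vanishing. The only point worth being careful about is the lifting of $f$ to a $\Spin$-diffeomorphism in the $(2)\Rightarrow(3)$ step and the vanishing of $\sign(T_f)$ in $(1)\Leftrightarrow(2)$.
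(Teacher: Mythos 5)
Your proposal is correct and follows essentially the same route as the paper: the same cycle of implications, with the computation of $\Omega_8^{\Spin}$ via $(\sign,\alpha)$ for $(1)\Leftrightarrow(2)$ (the paper phrases this via the generators $\bbH\bbP^2$ and the Bott manifold, which is the same thing) and \pref{Corollary}{cor:action-of-mcg} for $(2)\Rightarrow(3)$. The only differences are cosmetic: for $(5)\Rightarrow(1)$ the paper upgrades the isotopy to a concordance, glues the resulting metric into a psc metric on $T_f$ and applies Lichnerowicz, whereas you cite Hitchin's formula $\inddiff(g,f^*g)=\alpha(T_f)$ --- an equivalent argument; your observation that $\calR^+(M)\ne\emptyset$ by Stolz (genuinely needed for $(4)\Rightarrow(5)$) is a point the paper leaves implicit, and note that your justification of $\sign(T_f)=0$ via multiplicativity of the signature in fibre bundles is not literally valid in general (it fails without trivial monodromy action), though the vanishing itself is standard for bundles over $S^1$ and is asserted in the paper's introduction.
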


\begin{proof}
	The implications $3.\Rightarrow 4.$ and $4.\Rightarrow 5.$ are obvious and the implication $2.\Rightarrow 3$ follows from \pref{Corollary}{cor:action-of-mcg}. For $1.\Rightarrow 2.$ we note that
	\[\Omega^{\Spin}_8\cong \bbZ\oplus\bbZ \cong\langle [\hp2],[\beta]\rangle,\]
	where $\beta$ denotes the Bott manifold with $\hat\scrA(\beta)=1$ and $\sign(\beta)=0$. Furthermore, $\sign(\hp2)\ne0$ and $\hat\scrA(\hp2)=0$. Since for $T_f$ both these invariants vanish, it has to be $\Spin$-nullbordant. Finally $5.\Rightarrow 1.$ is proven as follows: Let $g_t$ be an isotopy between $f^*g$ and $g$. Since isotopy of psc-metrics implies concordance of psc-metrics, there exists a psc-metric $G$ on $M\times[0,1]$ restricting to $f^*g$ and $g$. Then $G$ induces a psc-metric on $T_f$ as one can identify the metrics on the boundary along $f^*$ and hence $\hat \scrA(T_f)=0$. 
\end{proof}

\begin{remark}
	Since $M$ is simply connected we have $\diff^{\Spin}(M)\twoheadrightarrow\diff^+(M)$. Hence the above Corollary classifies the action of $\Gamma^+(M)$ on $\calR^+(M)$ for every simply connected $7$-dimensional $\Spin$-manifold.	
\end{remark}

\noindent Note that the implications $2.\Rightarrow 3.\Rightarrow 4.\Rightarrow 5.\Rightarrow 1.$ don't require the restriction to dimension $7$.\begin{samepage} For simply connected, $7$-dimensional $\Spin$-manifolds, we get a further factorization of the action map:

\begin{center}
	\begin{tikzpicture}
		\node (0) at (0,3) {$\Gamma^{\Spin}(M)$};
		\node (1) at (3,2) {$KO^{-8}(\pt)$};
		\node (2) at (6,3) {$\pi_0(\haut(\calR^+(M)))$};
		\node (5) at (3.8,1.5) {$\hat\scrA(\beta)$};
		\node (6) at (6.5,2.5) {$\se_\beta$};

		\draw[->] (0) to node[above]{$\eta$} (2);
		\draw[->] (1) to (2);
		\draw[|->] (5) to (6);
		\draw[->] (0) to node[below, xshift=-15pt, yshift=7pt]{$\hat\scrA\circ T$} (1);
	\end{tikzpicture}
\end{center}
\end{samepage}
\noindent This factorization is sharp in the sense that $\ker\eta=\ker\hat\scrA\circ T$ by \pref{Corollary}{cor:7-dimensional}.

\begin{prop}\label{prop:a-hat-genus}
	Let $M$ be a $(d-1)$-dimensional, simply connected $\Spin$-manifold and let $W^d$ be a closed $\Spin$-manifold with $\alpha(W)\ne0$. Then $\se_W(g)\not\sim g$ for every psc-metric $g$ on $M$.
\end{prop}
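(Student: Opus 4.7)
The plan is to argue by contradiction. Suppose $\se_W(g) \sim g$ for some $g \in \calR^+(M)$; I will produce a closed $\Spin$-manifold with positive scalar curvature whose $\Spin$-cobordism class is $[M \times S^1] + [W] \in \Omega_d^\Spin$, and then use the Lichnerowicz--Hitchin vanishing of $\alpha$ to force $\alpha(W) = 0$.

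Step one is to replace the representative $M \times [0,1] \amalg W$ by an admissible one. Since $M$ is simply connected and $\Spin$, its stabilized tangential $2$-type is $\theta \colon B\Spin(d+1) \to B\ort(d+1)$. By \pref{Lemma}{lem:connectivity-of-bordisms}, applied to the self-cobordism $(M \times [0,1] \amalg W, \id, \id) \colon M \leadsto M$, there exists an admissible $\Spin$-cobordism $(V, \id, \id) \colon M \leadsto M$ in the same $\Spin$-cobordism class. By \pref{Theorem}{thm:main} we have $\calS_V \sim \se_W$ as homotopy classes, so the assumption gives $\calS_V(g) \sim g$.

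Step two converts this homotopy-level statement into actual geometry via \pref{Remark}{rem:walsh}: since $(V, \id, \id)$ is admissible and $\calS_V([g]) \sim [g]$, there is a psc-metric $G \in \calR^+(V)_{g,g}$. Because the two boundary restrictions of $G$ coincide with $g$, gluing the two copies of $M$ in $\partial V$ via $\id_M$ yields a closed $\Spin$-manifold $\hat V$ equipped with an induced psc metric. Applying the same gluing to the representative $M \times [0,1] \amalg W$ produces $(M \times S^1) \amalg W$, and since closing up a $\Spin$-cobordism between self-cobordisms (rel.\ the boundary $M \amalg M$) yields a $\Spin$-cobordism of the glued closed manifolds, I obtain $[\hat V] = [M \times S^1] + [W] \in \Omega_d^\Spin$.

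For the final step I would apply Lichnerowicz/Hitchin twice. First, $\hat V$ admits psc, so $\alpha(\hat V) = 0$. Second, $g \in \calR^+(M)$ gives $\alpha(M) = 0$, and by multiplicativity of $\alpha$ (via Atiyah--Singer) one has $\alpha(M \times S^1) = \alpha(M) \cdot \alpha(S^1) = 0$ in $KO^{-d}(\mathrm{pt})$, independently of which $\Spin$-structure on $S^1$ is induced. Therefore $\alpha(W) = \alpha(\hat V) - \alpha(M \times S^1) = 0$, contradicting the hypothesis $\alpha(W) \ne 0$. The main substantive step is the appeal to \pref{Remark}{rem:walsh}, which bridges the homotopy-theoretic self-equivalence $\calS_V$ and the geometric existence of a concrete psc metric on $V$; the rest is $\Spin$-cobordism bookkeeping plus Lichnerowicz.
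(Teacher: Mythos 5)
Your proposal is correct and follows essentially the same route as the paper's own proof: pass to an admissible representative $V$ of $[M\times[0,1]\amalg W]$ via \pref{Lemma}{lem:connectivity-of-bordisms}, invoke \pref{Remark}{rem:walsh} to produce $G\in\calR^+(V)_{g,g}$, close up to a psc $\Spin$-manifold cobordant to $(M\times S^1)\amalg W$, and conclude with Lichnerowicz and cobordism invariance of $\alpha$. The only difference is that you spell out the step $\alpha(M\times S^1)=0$ (via multiplicativity and $\alpha(M)=0$), which the paper leaves implicit.
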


\begin{proof}
	By \pref{Lemma}{lem:connectivity-of-bordisms} we can perform ($\Spin$-)surgery on the interior of $M\times[0,1]\amalg W$ to get an admissible cobordism $V\colon M\leadsto M$. If there exists a psc-metric $g_0\in\calR^+(M)$ such that $\se_W(g_0)\sim g_0$, there exists a psc-metric $G$ on $V$ that restricts to $g_0$ on both boundaries by \pref{Remark}{rem:walsh}. We obtain a psc-metric on the manifold $V'$ given by gluing the boundaries of $V$ together along the identity. So, $\alpha(V')=0$ by the Lichnerowicz-formula and since $\alpha$ is $\Spin$-cobordism invariant we get 
	\begin{align*}
		0&=\alpha(V')=\alpha((M\times S^1)\amalg W) = \alpha(W).\qedhere
	\end{align*}
\end{proof}

\noindent This shows that vanishing of the $\alpha$-invariant of $W$ is necessary condition for $\se(W)$ to be homotopic to the identity. We close with the following question.

\begin{que}\label{question1}
	Let $M$ be simply connected and $\Spin$. Is vanishing of the $\alpha$-invariant of $W$ a sufficient condition for $\se(W)$ to be homotopic to the identity on $\calR^+(M)$?
\end{que}


\noindent If the answer to \pref{Question}{question1} were yes, we would get the following diagram. 
\begin{center}
	\begin{tikzpicture}
		\node (0) at (0,2) {$\Gamma^{\Spin}(M)$};
		\node (1) at (1.5,1) {$\Omega^{\Spin}_d$};
		\node (2) at (6,2) {$\pi_0(\haut(\calR^+(M)))$};
		\node (8) at (3,0) {$KO^{-d}(\pt)$};
		
		\draw[->] (0) to (2);
		\draw[->] (1) to (2);
		\draw[->] (0) to (1);
		\draw[->] (1) to (8);
		\draw[->] (8) to (2);
	\end{tikzpicture}
\end{center}

\bibliographystyle{halpha-abbrv}
\bibliography{Bibliography}

\begin{thebibliography}{BERW17}
\expandafter\ifx\csname url\endcsname\relax
  \def\url#1{\texttt{#1}}\fi
\expandafter\ifx\csname doi\endcsname\relax
  \def\doi#1{\burlalt{doi:#1}{http://dx.doi.org/#1}}\fi
\expandafter\ifx\csname urlprefix\endcsname\relax\def\urlprefix{URL }\fi
\expandafter\ifx\csname href\endcsname\relax
  \def\href#1#2{#2}\fi
\expandafter\ifx\csname burlalt\endcsname\relax
  \def\burlalt#1#2{\href{#2}{#1}}\fi

\bibitem[ABP66]{andersonbrownpeterson_short}
D.~W. Anderson, E.~H. Brown, Jr., and F.~P. Peterson.
\newblock Spin cobordism.
\newblock {\em Bull. Amer. Math. Soc.}, 72:256--260, 1966.
\newblock \doi{10.1090/S0002-9904-1966-11486-6}.

\bibitem[Ada93]{adachi_embeddings}
M.~Adachi.
\newblock {\em Embeddings and Immersions}.
\newblock Translations of mathematical monographs. American Mathematical
  Society, 1993.

\bibitem[BERW17]{berw}
B.~Botvinnik, J.~Ebert, and O.~Randal-Williams.
\newblock Infinite loop spaces and positive scalar curvature.
\newblock {\em Invent. Math.}, 209(3):749--835, 2017.
\newblock \doi{10.1007/s00222-017-0719-3}.

\bibitem[{Che}04]{chernysh}
V.~{Chernysh}.
\newblock {On the homotopy type of the space $\mathcal{R}^+(M)$}.
\newblock {\em ArXiv Mathematics e-prints}, May 2004,
  \burlalt{math/0405235}{http://arxiv.org/abs/math/0405235}.

\bibitem[Die08]{tomdieck}
T.~Dieck.
\newblock {\em Algebraic Topology}.
\newblock EMS textbooks in mathematics. European Mathematical Society, 2008.

\bibitem[Ebe06]{ebert_characteristic}
J.~Ebert.
\newblock {\em Characteristic classes of spin surface bundles: applications of
  the {M}adsen-{W}eiss theory}, volume 381 of {\em Bonner Mathematische
  Schriften [Bonn Mathematical Publications]}.
\newblock Universit\"{a}t Bonn, Mathematisches Institut, Bonn, 2006.
\newblock Dissertation, Rheinische Friedrich-Wilhelms-Universit\"{a}t Bonn,
  Bonn, 2006.

\bibitem[EF18]{ebertfrenck}
J.~{Ebert} and G.~{Frenck}.
\newblock {The Gromov-Lawson-Chernysh surgery theorem}.
\newblock {\em ArXiv e-prints}, July 2018,
  \burlalt{1807.06311}{http://arxiv.org/abs/1807.06311}.

\bibitem[ERW19a]{erw_psc2}
J.~Ebert and O.~Randal-Williams.
\newblock Infinite loop spaces and positive scalar curvature in the presence of
  a fundamental group.
\newblock {\em Geom. Topol.}, 23(3):1549--1610, 2019.
\newblock \doi{10.2140/gt.2019.23.1549}.

\bibitem[ERW19b]{erw_psc3}
J.~Ebert and O.~Randal-Williams.
\newblock The positive scalar curvature cobordism category.
\newblock {\em ArXiv e-prints}, Apr. 2019,
  \burlalt{1904.12951}{http://arxiv.org/abs/1904.12951}.

\bibitem[Fre19]{ownthesis}
G.~Frenck.
\newblock {\em The Action of the mapping class group on spaces of metrics of
  positive scalar curvature}.
\newblock PhD thesis, WWU M{\"u}nster, Available through the author's website,
  July 2019.

\bibitem[Fre20]{own_hspace}
G.~Frenck.
\newblock H-space structures on spaces of metrics of positive scalar curvature.
\newblock {in preparation}, 2020.

\bibitem[GL80]{gromovlawson_classification}
M.~Gromov and H.~B. Lawson, Jr.
\newblock The classification of simply connected manifolds of positive scalar
  curvature.
\newblock {\em Ann. of Math. (2)}, 111(3):423--434, 1980.
\newblock \doi{10.2307/1971103}.

\bibitem[GRW14]{grw_stable}
S.~r. Galatius and O.~Randal-Williams.
\newblock Stable moduli spaces of high-dimensional manifolds.
\newblock {\em Acta Math.}, 212(2):257--377, 2014.
\newblock \doi{10.1007/s11511-014-0112-7}.

\bibitem[GRW16]{grw_abelian}
S.~r. Galatius and O.~Randal-Williams.
\newblock Abelian quotients of mapping class groups of highly connected
  manifolds.
\newblock {\em Math. Ann.}, 365(1-2):857--879, 2016.
\newblock \doi{10.1007/s00208-015-1300-2}.

\bibitem[GWW12]{connectedcerf}
D.~{Gay}, K.~{Wehrheim}, and C.~{Woodward}.
\newblock Connected cerf theory.
\newblock {in preparation}, 2012.
\newblock \urlprefix\url{https://math.berkeley.edu/~katrin/papers/cerf.pdf}.

\bibitem[Hat75]{hatcher_higher}
A.~E. Hatcher.
\newblock Higher simple homotopy theory.
\newblock {\em Ann. of Math. (2)}, 102(1):101--137, 1975.
\newblock \doi{10.2307/1970977}.

\bibitem[Hir76]{hirsch}
M.~Hirsch.
\newblock {\em Differential Topology}.
\newblock Graduate Texts in Mathematics. Springer New York, 1976.

\bibitem[Hit74]{hitchin_spinors}
N.~Hitchin.
\newblock Harmonic spinors.
\newblock {\em Advances in Math.}, 14:1--55, 1974.
\newblock \doi{10.1016/0001-8708(74)90021-8}.

\bibitem[HJ20]{hebestreitjoachim}
F.~Hebestreit and M.~Joachim.
\newblock Twisted spin cobordism and positive scalar curvature.
\newblock {\em J. Topol.}, 13(1):1--58, 2020.
\newblock \doi{10.1112/topo.12122}.

\bibitem[Igu88]{igusa_stability}
K.~Igusa.
\newblock The stability theorem for smooth pseudoisotopies.
\newblock {\em $K$-Theory}, 2(1-2):vi+355, 1988.
\newblock \doi{10.1007/BF00533643}.

\bibitem[Ker65]{kervaire_barden}
M.~A. Kervaire.
\newblock Le th\'{e}or\`eme de {B}arden-{M}azur-{S}tallings.
\newblock {\em Comment. Math. Helv.}, 40:31--42, 1965.
\newblock \doi{10.1007/BF02564363}.

\bibitem[Kre76]{kreck_bordism_odd}
M.~Kreck.
\newblock Cobordism of odd-dimensional diffeomorphisms.
\newblock {\em Topology}, 15(4):353--361, 1976.
\newblock \doi{10.1016/0040-9383(76)90029-X}.

\bibitem[Kre99]{kreck_duality}
M.~Kreck.
\newblock Surgery and duality.
\newblock {\em Ann. of Math. (2)}, 149(3):707--754, 1999.
\newblock \doi{10.2307/121071}.

\bibitem[Mac71]{maclane_categories}
S.~MacLane.
\newblock {\em Categories for the working mathematician}.
\newblock Springer-Verlag, New York-Berlin, 1971.
\newblock Graduate Texts in Mathematics, Vol. 5.

\bibitem[Pal66]{palais_infinite-dimensional}
R.~S. Palais.
\newblock Homotopy theory of infinite dimensional manifolds.
\newblock {\em Topology}, 5:1--16, 1966.
\newblock \doi{10.1016/0040-9383(66)90002-4}.

\bibitem[{Per}17]{perlmutter_parametrized}
N.~{Perlmutter}.
\newblock {Cobordism Categories and Parametrized Morse Theory}.
\newblock {\em ArXiv e-prints}, Mar. 2017,
  \burlalt{1703.01047}{http://arxiv.org/abs/1703.01047}.

\bibitem[Sma62]{smale_structure}
S.~Smale.
\newblock On the structure of manifolds.
\newblock {\em Amer. J. Math.}, 84:387--399, 1962.
\newblock \doi{10.2307/2372978}.

\bibitem[SY79]{schoenyau}
R.~Schoen and S.~T. Yau.
\newblock On the structure of manifolds with positive scalar curvature.
\newblock {\em Manuscripta Math.}, 28(1-3):159--183, 1979.
\newblock \doi{10.1007/BF01647970}.

\bibitem[Tho54]{thom_quelques}
R.~Thom.
\newblock Quelques propri\'{e}t\'{e}s globales des vari\'{e}t\'{e}s
  diff\'{e}rentiables.
\newblock {\em Comment. Math. Helv.}, 28:17--86, 1954.
\newblock \doi{10.1007/BF02566923}.

\bibitem[Wal71]{wall_connectivity1}
C.~T.~C. Wall.
\newblock Geometrical connectivity. {I}.
\newblock {\em J. London Math. Soc. (2)}, 3:597--604, 1971.
\newblock \doi{10.1112/jlms/s2-3.4.597}.

\bibitem[Wal11]{walsh_parametrized1}
M.~Walsh.
\newblock Metrics of positive scalar curvature and generalised {M}orse
  functions, {P}art {I}.
\newblock {\em Mem. Amer. Math. Soc.}, 209(983):xviii+80, 2011.
\newblock \doi{10.1090/S0065-9266-10-00622-8}.

\bibitem[Wal13]{walsh_cobordism}
M.~Walsh.
\newblock Cobordism invariance of the homotopy type of the space of positive
  scalar curvature metrics.
\newblock {\em Proc. Amer. Math. Soc.}, 141(7):2475--2484, 2013.
\newblock \doi{10.1090/S0002-9939-2013-11647-3}.

\bibitem[Wal14]{walsh_parametrized2}
M.~Walsh.
\newblock Metrics of positive scalar curvature and generalised {M}orse
  functions, {P}art {II}.
\newblock {\em Trans. Amer. Math. Soc.}, 366(1):1--50, 2014.
\newblock \doi{10.1090/S0002-9947-2013-05715-7}.

\bibitem[Wal16]{wall_difftop}
C.~T.~C. Wall.
\newblock {\em Differential topology}.
\newblock Cambridge University Press, 2016.

\end{thebibliography}

\end{document}